\documentclass[twoside, 11pt]{article}
\usepackage{amsmath,amssymb,amsthm,mathrsfs}
\usepackage[margin=2.5cm]{geometry} % showframe,
\usepackage{lipsum}
\usepackage{titlesec,hyperref}
\usepackage{fancyhdr}
\usepackage[numbers,sort&compress]{natbib}
\usepackage{color}

\pagestyle{fancy}
\fancyhf{}
\fancyhead[CO]{\footnotesize\it Global Existence and Optimal Decay Rates of Solutions for Compressible Hall-MHD Equations}
\fancyhead[CE]{\footnotesize\it J.C.Gao, Z.A.Yao}
\fancyfoot[CE,CO]{\footnotesize\rm\thepage}
\fancypagestyle{plain}
{
\fancyhf{}

}

\linespread{1.1}
%\zihao{-4}

\titleformat{\subsection}{\it}{\thesubsection.\enspace}{1.5pt}{}
\titleformat{\subsubsection}{\it}{\thesubsubsection.\enspace}{1.5pt}{}

\newtheorem{theo}{Theorem}[section]
\newtheorem{lemm}[theo]{Lemma}

\newtheorem{prop}[theo]{Proposition}
\newtheorem{rema}{Remark}[section]
\numberwithin{equation}{section}

\allowdisplaybreaks

\def\th2{\frac{\theta}{2}}

\begin{document}
\title{Global Existence and Optimal Decay Rates of Solutions for Compressible Hall-MHD Equations \hspace{-4mm}}
\author{Jincheng Gao \quad Zheng-an Yao \\[10pt]
\small {School of Mathematics and Computational Science, Sun Yat-sen University,}\\
\small {510275, Guangzhou, PR China}\\[5pt]
}

\footnotetext{Email: \it gaojc1998@163.com(J.C.Gao), \it mcsyao@mail.sysu.edu.cn(Z.A.Yao).}
\date{}

\maketitle

\begin{abstract}
In this paper, we are concerned with global existence and optimal decay rates of
solutions for the three-dimensional compressible Hall-MHD equations.
First, we prove the global existence of
strong solutions by the standard energy method under the condition
that the initial data are close to the constant equilibrium state in $H^2$-framework.
Second, optimal decay rates of strong solutions in $L^2$-norm are obtained
if the initial data belong to $L^1$ additionally.
Finally, we apply Fourier splitting method by
Schonbek [Arch.Rational Mech. Anal. 88 (1985)] to establish optimal decay rates for
higher order spatial derivatives of classical solutions in $H^3$-framework,
which improves the work of Fan et al.[Nonlinear Anal. Real World Appl. 22 (2015)].

\vspace*{5pt}
\noindent{\it {\rm Keywords}}:
compressible Hall-MHD equations;
global solutions; optimal decay rates; Fourier splitting method.

\vspace*{5pt}
\noindent{\it {\rm 2010 Mathematics Subject Classification}}:
76W05, 35Q35, 35D05, 76X05.
\end{abstract}

%\tableofcontents

\section{Introduction}
\quad The application of Hall-magnetohydrodynamics system (in short, Hall-MHD)
covers a very wide range of physical objects, for example, magnetic reconnection
in space plasmas, star formulation, neutron stars, and geodynamo, refer to
\cite{{Forbes},{Homann},{Wardle},{Balbus-Terquem},{Shalybkov-Urpin},{Mininni}}
and the references therein.
Recently, Acheritogaray et al.\cite{Liu} derived the Hall-MHD equations
from the two-fluid Euler-Maxwell system for
electrons and ions through a set of scaling limits or from the kinetic
equations by taking macroscopic quantities in the equations under some closure assumptions.
They also established the global existence of weak solutions for periodic boundary condition.
In this paper, we investigate the following compressible Hall-MHD equations in
three dimensional whole space $\mathbb{R}^3$(see \cite{Liu}):
\begin{equation}\label{1.1}
\left\{
\begin{aligned}
&\rho_t+{\rm div}(\rho u)=0,\\
&(\rho u)_t+{\rm div}(\rho u\otimes u)-\mu \Delta u-(\mu+\nu)\nabla {\rm div}u
  +\nabla P(\rho)=({\rm curl} B)\times B,\\
&B_t -{\rm curl}(u \times B)+{{\rm curl} { \left[\frac{({\rm curl} B)\times B}{\rho}\right]}}= \Delta B, \ {\rm div} B=0,
\end{aligned}
\right.
\end{equation}
where the functions $\rho, u,$ and $B$
represent density, velocity, and magnetic field respectively.
The pressure $P(\rho)$ is a smooth function in a neighborhood of $1$ with $P'(1)=1$.
The constants $\mu$ and $\nu$ denote the viscosity coefficients of the flow
and satisfy physical condition as follows
$$
\mu>0, \ 2\mu+3\nu \ge 0.
$$
To complete the system \eqref{1.1}, the initial data are given by
\begin{equation}\label{1.2}
\left.(\rho, u, B)(x,t)\right|_{t=0}=(\rho_0(x), u_0(x), B_0(x)).
\end{equation}\label{1.3}
Furthermore, as the space variable tends to infinity, we assume
\begin{equation}\label{1.3}
\underset{|x|\rightarrow \infty}{\lim}(\rho_0-1, u_0, B_0)(x)=0.
\end{equation}
Obviously, the compressible Hall-MHD equations transform into the well-known
compressible MHD equations when the Hall effect term
${\rm curl}\left(\frac{({\rm curl}B)\times B}{\rho}\right)$ is neglected.

When the density is constant,
Chae et al.\cite{Chae-Degond-Liu} proved local existence of smooth solutions for large data
and global smooth solutions for small data in three dimensional whole space.
They also showed a Liouville theorem for the stationary solutions.
Chae and Lee \cite{Chae-Lee} established an optimal blow-up criterion for classical solutions
and proved two global-in-time existence results of classical solutions for small initial data,
the smallness conditions of which are given by the suitable Sobolev and Besov norms respectively.
Later, Fan et al.\cite{Fan-Li-Nakamura} also established some new regularity criteria,
which also are built for density-dependent incompressible Hall-MHD equations with
positive initial density by Fan and Ozawa \cite{Fan-Ozawa}.
Maicon and Lucas \cite{Maicon-Lucas} proved a stability theorem for global large solutions under a suitable integrable hypothesis and constructed a special large solution by assuming the condition of curl-free magnetic fields.
Fan et al. \cite{Fan-Huang-Nakamura} established the global well-posedness of the axisymmetric solutions.
Chae and Schonbek \cite{Chae-Schonbek} established temporal decay estimates for weak solutions
and obtained algebraic time decay for higher order Sobolev norms of small initial data solutions
as follows
$$
\|\nabla^k u(t)\|_{L^2}+\|\nabla^k B(t)\|_{L^2}\le C(1+t)^{-\frac{3+2k}{4}},
~k \in \mathbb{N}
$$
for all $t \ge T^*$($T^*$ is a positive constant).
Furthermore, Weng \cite{Weng-Shangkun} extended this result
by providing upper and lower bounds on the decay of higher order derivatives.
For the compressible Hall-MHD equations \eqref{1.1},
Fan et al.\cite{Fan-Zhou} proved the local existence of strong solutions with
positive initial density and global small solutions(classical solutions) with small initial perturbation.
They also established optimal time decay rate for classical solutions as follows
\begin{equation}\label{1.4}
\|(\rho-1, u, B)(t)\|_{L^2} \le C(1+t)^{-\frac{3}{4}}.
\end{equation}
Here, they required the initial perturbation is small in $H^3$-norm
and bounded in $L^1$-norm.

Recently, the study of decay rates for solutions to the MHD equations has
aroused many researchers' interest.
First of all, under the $H^3$-framework,  Li and Yu \cite{Li-Yu} and Chen and Tan \cite{Chen-Tan}
not only established the global existence of classical solutions, but also obtained
the time decay rates for the three-dimensional compressible MHD equations by assuming the initial data belong to $L^1$ and $L^q( q \in \left[1, \frac{6}{5}\right))$ respectively.
More precisely, Chen and Tan \cite{Chen-Tan} built the time decay rates
\begin{equation}\label{1.5}
\|\nabla^k (\rho-1, u, B)(t)\|_{H^{3-k}}\le C(1+t)^{-\frac{3}{2}\left(\frac{1}{q}-\frac{1}{2}\right)-\frac{k}{2}},
\end{equation}
where $k=0,1$. The time decay rates \eqref{1.5} has also been established by
Li and Yu \cite{Li-Yu} for the case $q=1$.
Motivated by the work of Guo and Wang \cite{Guo-Wang},
Tan and Wang \cite{Tan-Wang} established
the optimal time decay rates for the higher order spatial derivatives of solutions
if the initial perturbation belongs to $H^N\cap\dot{H}^{-s}\left(N\ge3, s \in \left[0, \frac{3}{2}\right)\right)$.
More precisely, they built the following time decay rates
$$
\|\nabla^k (\rho-1, u, B)(t)\|_{H^{N-k}}\le C(1+t)^{-\frac{k+s}{2}},
$$
where $k=0,1,...,N-1$.
Motivated by the work \cite{Gao-Tao-Yao}, we (see \cite{Gao-Chen-Yao})
establish the following time decay rates
for all $t \ge T^*(T^*$ is a positive constant$)$,
\begin{equation}\label{1.6}
\begin{aligned}
&\|\nabla^k(\rho-1)(t)\|_{H^{3-k}}+\|\nabla^k u(t)\|_{H^{3-k}}\le C(1+t)^{-\frac{3+2k}{4}},\\
&\|\nabla^m B(t)\|_{H^{3-m}}\le C(1+t)^{-\frac{3+2m}{4}},\\
\end{aligned}
\end{equation}
where $k=0, 1, 2,$ and $m=0,1,2,3$.  It is easy to see that the time decay rates \eqref{1.6}
is better than decay rates \eqref{1.5} since \eqref{1.6} provides faster time decay rates
for the higher order spatial derivatives of solutions.

In this paper, we hope to establish the global existence and time decay rates
of solutions for the compressible Hall-MHD equations \eqref{1.1}-\eqref{1.3}.
First of all, we construct the global existence of strong solutions by the standard
energy method under the condition that the initial data are close to the constant equilibrium
state $(1, 0, 0)$ in $H^2$-norm.
Second, if the initial data in $L^1-$norm are finite additionally, the optimal
time decay rates of strong solutions are established by the method of Green function.
Precisely, we obtain the following time decay rates for all $ t\ge 0$,
$$
\|(\rho-1)(t)\|_{H^{2-k}}+\|u(t)\|_{H^{2-k}}+\|B(t)\|_{H^{2-k}}\le C(1+t)^{-\frac{3+2k}{4}},
$$
where $k=0, 1$.
This framework of time convergence rates for compressible flows has been applied to
other compressible models, refer to \cite{{Tan-Wang2},{Hu-Wu2},{Wang-Wang}, {Wang-Wen-Jun}}.
Although magnetic field equations \eqref{1.1}$_3$ are nonlinear parabolic equations,
we hope to establish optimal time decay rates for the second order spatial derivatives
of magnetic field under the condition of small initial perturbation.
In order to achieve this goal, we move the nonlinear terms
to the right hand side of \eqref{1.1}$_3$ and deal with the nonlinear terms as external
force with the property on fast time decay rates.
Then, the application of Fourier splitting method by Schonbek \cite{Schonbek} helps
us to establish optimal time decay rate for the second order spatial derivatives
of magnetic field as follows
$$
\|\nabla^2 B(t)\|_{L^2}\le C(1+t)^{-\frac{7}{4}}.
$$
Finally, one focus on establishing optimal time decay rates for higher order spatial
derivatives of classical solutions to compressible Hall-MHD equations.
More precisely, we prove that the global classical solution $(\rho, u, B)$ of Cauchy problem
\eqref{1.1}-\eqref{1.3} has the time decay rates \eqref{1.6}.
Obviously, these time decay rates improve the results \eqref{1.4}
by Fan et al.\cite{Fan-Zhou} since we  build faster time decay rates for higher order
spatial derivatives of classical solutions.

\textbf{Notation:} In this paper, we use $H^s(\mathbb{R}^3)( s\in \mathbb{R})$ to
denote the usual Sobolev spaces
with norm $\|\cdot\|_{H^s}$ and $L^p(\mathbb{R}^3)(1\le p \le \infty)$ to denote the usual $L^p$ spaces with norm $\| \cdot \|_{L^p}$. The symbol $\nabla^l $ with an integer $l \ge 0$ stands for the usual any spatial derivatives of order $l$.
For example,  we define
$$
\nabla^k v=
\left\{\left.\partial_x^\alpha v_i\right||\alpha|=k,~i=1,2,3\right\}
,~v=(v_1, v_2, v_3).
$$
We also denote $\mathscr{F}(f):=\hat{f}$.
The notation $a \lesssim b$ means that $a \le C b$ for a universal constant $C>0$ independent of
time $t$.
The notation $a \approx b$ means $a \lesssim b$ and $b \lesssim a$.
For the sake of simplicity, we write $\int f dx:=\int _{\mathbb{R}^3} f dx.$

First of all, we establish the global existence and optimal decay rates of strong solutions
for the compressible Hall-MHD equations \eqref{1.1}-\eqref{1.3}.
\begin{theo}\label{THM1}
Assume that the initial data $(\rho_0-1,u_0, B_0)\in H^2$ and there exists a small
constant $\delta_0>0$ such that
$$
\|(\rho_0-1, u_0, B_0)\|_{H^2} \le \delta_0,
$$
then the problem \eqref{1.1}-\eqref{1.3} admits a unique global strong solution $(\rho, u, B)$
satisfying for all $t \ge 0$,
$$
\|(\rho-1,u,B)(t)\|_{H^2}^2+\int_0^t (\|\nabla \rho(s)\|_{H^1}^2+\|\nabla(u, B)(s)\|_{H^2}^2)ds
\le C\|(\rho_0-1,u_0,B_0)\|_{H^2}^2.
$$
Furthermore, if $\|(\rho_0-1, u_0, B_0)\|_{L^1}$ is finite additionally,  then the global strong
solution $(\rho,u, B)$ has following decay rates for all $t \ge 0$,
\begin{equation}\label{Decay1}
\begin{aligned}
&\|\nabla^k(\rho-1)(t)\|_{H^{2-k}}+\|\nabla^k u(t)\|_{H^{2-k}}\le C(1+t)^{-\frac{3+2k}{4}},\\
&\|\nabla^m B(t)\|_{H^{2-m}}\le C(1+t)^{-\frac{3+2m}{4}},\\
\end{aligned}
\end{equation}
where $k=0,1$, and $m=0,1,2$.
\end{theo}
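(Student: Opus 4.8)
The plan is to split the proof into three parts: global existence, $L^2$-type decay of $(\rho-1,u,B)$ and its first derivatives, and the decay of $\nabla^2 B$.

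For the global existence part I would proceed by the standard continuity (bootstrap) argument. Reformulating \eqref{1.1} in perturbation variables $\sigma=\rho-1$, $u$, $B$, one gets a quasilinear system whose linear part is the compressible MHD/parabolic block and whose nonlinearity is of quadratic type plus the Hall term ${\rm curl}[({\rm curl}B)\times B/\rho]$. Local existence is quoted from \cite{Fan-Zhou}. The core is the a priori estimate: assuming $\sup_{[0,T]}\|(\sigma,u,B)(t)\|_{H^2}\le\delta$ small, derive the energy inequality
$$
\frac{d}{dt}\mathcal{E}(t)+c\bigl(\|\nabla\sigma\|_{H^1}^2+\|\nabla(u,B)\|_{H^2}^2\bigr)\le 0,
$$
where $\mathcal{E}(t)\approx\|(\sigma,u,B)(t)\|_{H^2}^2$. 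One runs $L^2$ estimates on $\nabla^k$ of the equations for $k=0,1,2$, uses the (perturbed) parabolic dissipation in $u$ and $B$, and recovers dissipation in $\nabla\sigma$ via the standard trick of testing the momentum equation against $\nabla\sigma$ (the "$\nabla\sigma\cdot u$'' correction term added to $\mathcal{E}$). All nonlinear terms are absorbed using Sobolev embeddings $H^2\hookrightarrow L^\infty$ in $\mathbb{R}^3$ and the smallness of $\delta$; the Hall term, being one order more singular in $B$ (three derivatives total in $L^2$ against $H^2$ control), is handled since $\nabla^3 B$ appears under the integral paired with $\nabla^3 B$ from the dissipation — this is exactly where the $H^2$ (rather than $H^1$) regularity threshold is used, and I expect this to be the most delicate bookkeeping step of the existence part. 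Closing the bootstrap gives the stated uniform bound.

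For the decay rates, with the global solution in hand I would use the Green-function / spectral decomposition of the linearized system together with Duhamel's formula. Writing $U=(\sigma,u,B)$, $U(t)=e^{tL}U_0+\int_0^t e^{(t-s)L}N(U)(s)\,ds$, the linear semigroup $e^{tL}$ satisfies the familiar $L^1$–$L^2$ estimate $\|\nabla^k e^{tL}U_0\|_{L^2}\lesssim(1+t)^{-3/4-k/2}\|U_0\|_{L^1}+e^{-ct}\|\nabla^k U_0\|_{L^2}$ (this follows from the low-/high-frequency analysis of the symbol, identical in structure to compressible MHD since the Hall term is quadratic and contributes only to $N$). One then sets $M(t):=\sup_{0\le s\le t}\sum_{k=0,1}(1+s)^{3/4+k/2}\|\nabla^k U(s)\|_{H^{2-k}}$ and estimates the Duhamel integral, bounding $\|N(U)(s)\|_{L^1}\lesssim\|U(s)\|_{L^2}\|(\nabla U,\nabla^2 U)(s)\|_{L^2}\lesssim\delta\,(1+s)^{-3/4}M(s)$ and $\|\nabla^{k-1}N(U)(s)\|_{L^2}$ similarly, so that the time-convolution integrals produce $(1+t)^{-3/4-k/2}$ (the borderline integrals $\int_0^t(1+t-s)^{-3/4-k/2}(1+s)^{-3/4}\,ds$ are controlled in the usual way, splitting at $s=t/2$). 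This yields $M(t)\lesssim\|U_0\|_{L^1\cap H^2}+\delta M(t)$, hence $M(t)$ bounded, which is the first two lines of \eqref{Decay1} for $k=0,1$.

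Finally, for $\|\nabla^2 B(t)\|_{L^2}\lesssim(1+t)^{-7/4}$, the magnetic equation \eqref{1.1}$_3$ is a genuine heat equation $B_t-\Delta B=F$ with $F={\rm curl}(u\times B)-{\rm curl}[({\rm curl}B)\times B/\rho]$; by the decay already obtained, $\|F(t)\|_{L^2}$ and $\|\nabla F(t)\|_{L^2}$ decay fast (at rate $(1+t)^{-3/2}$ or better, counting that $F$ is at least quadratic with at least one derivative). Apply the Fourier splitting method of Schonbek \cite{Schonbek}: differentiate twice, form $\frac{d}{dt}\|\nabla^2 B\|_{L^2}^2+2\|\nabla^3 B\|_{L^2}^2=2\int\nabla^2 F\cdot\nabla^2 B\,dx$ (integrating by parts to move a derivative off $\nabla^2 F$ onto $\nabla^3 B$, so only $\nabla F$-type quantities are needed), introduce the time-dependent ball $S(t)=\{\xi:|\xi|^2\le g(t)/(1+t)\}$ with $g$ growing slowly, bound the low-frequency part of $\|\nabla^2 B\|_{L^2}^2$ using the known decay of $\hat B$ near $\xi=0$ (which comes from the $L^1$ bound on $B$ propagated in time) and absorb the high-frequency part into the dissipation. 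This gives a differential inequality $\frac{d}{dt}[(1+t)^{\ell}\|\nabla^2 B\|_{L^2}^2]\le C(1+t)^{\ell-1}(\text{lower-order decay terms})$ whose integration yields the sharp $(1+t)^{-7/4}$ rate. The main obstacle throughout is ensuring the Hall term — which raises the derivative count by one and is only quadratic — never exceeds the available dissipation; this is why the argument is pinned to the $H^2$ energy space and why $\nabla^2 B$ (not higher) is the best one can reach in this theorem.
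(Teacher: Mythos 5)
Your existence argument and your treatment of $\nabla^2 B$ by Fourier splitting follow the paper's route closely: energy estimates on $\nabla^k$ of the perturbed system for $k=0,1,2$, the cross term $\int\nabla^k u\cdot\nabla^{k+1}\varrho\,dx$ to recover the dissipation of the density, the continuity argument, and then the differential inequality $\frac{d}{dt}\|\nabla^2B\|_{L^2}^2+\|\nabla^3B\|_{L^2}^2\lesssim(1+t)^{-5}$ combined with the splitting $\|\nabla^3B\|_{L^2}^2\ge\frac{R}{1+t}\|\nabla^2B\|_{L^2}^2-\left(\frac{R}{1+t}\right)^2\|\nabla B\|_{L^2}^2$. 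The genuine gap is in the middle step, where you propose to close a pure Duhamel bootstrap for $M(t)=\sup_{s}\sum_{k=0,1}(1+s)^{3/4+k/2}\|\nabla^kU(s)\|_{H^{2-k}}$. This norm contains $\|\nabla^2U(s)\|_{L^2}$, and the high-frequency part of $\|\nabla^2e^{(t-s)L}N(s)\|_{L^2}$ in Duhamel's formula requires $\|\nabla^2N(s)\|_{L^2}$; but $\nabla^2S_2$ contains $h(\varrho)\nabla^4u$, and the Hall term in $\nabla^2 S_3$ contributes fourth-order derivatives of $B$, none of which exist for an $H^2$ strong solution (the hyperbolic density equation prevents any smoothing gain in the $(\varrho,u)$ block). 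Even at order one, $\|\nabla(S_1,S_2,S_3)\|_{L^2}$ is controlled (cf. \eqref{non-estimates}) only by quantities involving $\|\nabla^2(u,B)\|_{H^1}$, i.e. $\|\nabla^3(u,B)\|_{L^2}$, which has no pointwise-in-time decay in this framework --- only its time integral is bounded via \eqref{255}. So your claim that ``$\|\nabla^{k-1}N(U)(s)\|_{L^2}$ similarly'' yields $\delta(1+s)^{-3/4-k/2}M(s)$ does not go through, and since the $\nabla^2 B$ estimate feeds on the $k=0,1$ rates, the gap propagates to the $m=2$ claim as well.

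The paper's fix is a hybrid argument you would need to adopt (or replace). Duhamel is used only for $\|U(t)\|_{L^2}$ and $\|\nabla U(t)\|_{L^2}$; the troublesome contribution $\int_0^t(1+t-\tau)^{-5/4}\|\nabla(\varrho,B)(\tau)\|_{H^1}\|\nabla^2(u,B)(\tau)\|_{H^1}\,d\tau$ is handled by Cauchy--Schwarz in time against the dissipation integral $\int_0^t\|\nabla^2(u,B)\|_{H^1}^2\,d\tau\le C$ from the existence part. The decay of the full $H^1$ norm of $\nabla U$ (hence of $\nabla^2U$) is then obtained not from the semigroup but from the Lyapunov-type inequality $\frac{d}{dt}\mathcal{E}_1^2(t)+C\mathcal{E}_1^2(t)\le\|\nabla(\varrho,u,B)\|_{L^2}^2$, obtained by adding the lower-order term to both sides of \eqref{253} with $l=1$, followed by Gronwall as in \eqref{264}--\eqref{267}. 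Both devices are essential to reach the stated rates for $k=1$ and $m=1,2$ in the $H^2$ framework.
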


\begin{rema}
For any $2\le p \le6$, by virtue of Theorem \ref{THM1} and the Sobolev interpolation inequality,
we obtain time decay rates as follows
$$
\begin{aligned}
&\|(\rho-1)(t)\|_{L^p}+\|u(t)\|_{L^p}\le C(1+t)^{-\frac{3}{2}\left(1-\frac{1}{p}\right)},\\
&\|\nabla^k B(t)\|_{L^p}\le C(1+t)^{-\frac{3}{2}\left(1-\frac{1}{p}\right)-\frac{k}{2}},\\
\end{aligned}
$$
where $k=0,1.$ Furthermore, in the same manner, we also have
$$
\begin{aligned}
&\|(\rho-1)(t)\|_{L^\infty}+\|u(t)\|_{L^\infty}\le C(1+t)^{-\frac{5}{4}},\\
&\|B (t)\|_{L^\infty}\le C(1+t)^{-\frac{3}{2}}.
\end{aligned}
$$
\end{rema}

Second, we build time decay rates for the time derivatives of global strong solutions.

\begin{theo}\label{THM2}
Under all the assumptions in Theorem \ref{THM1}, the global strong solution $(\rho, u, B)$
of Cauchy problem \eqref{1.1}-\eqref{1.3} has the decay rates
$$
\begin{aligned}
&\|\rho_t(t)\|_{H^1}+\|u_t(t)\|_{L^2}\le C(1+t)^{-\frac{5}{4}},\\
&\|B_t(t)\|_{L^2}\le C(1+t)^{-\frac{7}{4}}
\end{aligned}
$$
for all $t \ge 0$.
\end{theo}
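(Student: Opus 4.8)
The plan is to derive the time-decay estimates for $\rho_t$, $u_t$, and $B_t$ directly from the equations \eqref{1.1}, reading off each time derivative in terms of spatial derivatives of $(\rho-1,u,B)$ and nonlinear products, and then invoking the decay rates \eqref{Decay1} already proved in Theorem \ref{THM1} together with the uniform bound $\|(\rho-1,u,B)(t)\|_{H^2}\lesssim \delta_0$. For the density, from \eqref{1.1}$_1$ we write $\rho_t=-\operatorname{div}(\rho u)=-u\cdot\nabla\rho-\rho\operatorname{div}u$, so that
\[
\|\rho_t\|_{H^1}\lesssim \|u\cdot\nabla\rho\|_{H^1}+\|\rho\operatorname{div}u\|_{H^1}.
\]
I would bound each product in $H^1$ by a sum of terms of the form $\|\nabla^j u\|_{L^\infty\text{ or }L^6}\,\|\nabla^{l}\rho\|_{L^2\text{ or }L^3}$ (and the symmetric combination), use the Gagliardo--Nirenberg/Sobolev embeddings $H^1\hookrightarrow L^6$, $H^2\hookrightarrow L^\infty$, and then apply \eqref{Decay1}: since $\|\nabla u\|_{H^1}\lesssim(1+t)^{-5/4}$ and $\|\rho-1\|_{H^2}$ is bounded while $\|\nabla\rho\|_{H^1}\lesssim(1+t)^{-5/4}$, the slowest-decaying contribution is $(1+t)^{-5/4}$. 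This yields $\|\rho_t\|_{H^1}\lesssim(1+t)^{-5/4}$.

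For the velocity, I rewrite \eqref{1.1}$_2$ in nonconservative form using \eqref{1.1}$_1$:
\[
u_t=-u\cdot\nabla u+\frac{1}{\rho}\bigl(\mu\Delta u+(\mu+\nu)\nabla\operatorname{div}u\bigr)-\frac{P'(\rho)}{\rho}\nabla\rho+\frac{(\operatorname{curl}B)\times B}{\rho}.
\]
Estimating in $L^2$, the linear terms $\Delta u$, $\nabla\operatorname{div}u$ are controlled by $\|\nabla^2 u\|_{L^2}\lesssim(1+t)^{-7/4}$ (the $k=1$ case of \eqref{Decay1} gives $\|\nabla u\|_{H^1}\lesssim(1+t)^{-5/4}$, but we actually need the second-derivative piece which decays like $(1+t)^{-7/4}$), the pressure term by $\|\nabla\rho\|_{L^2}\lesssim(1+t)^{-5/4}$, the convection term $u\cdot\nabla u$ by $\|u\|_{L^\infty}\|\nabla u\|_{L^2}$ with an extra decay factor making it faster, and the Lorentz force by $\|B\|_{L^\infty}\|\nabla B\|_{L^2}\lesssim(1+t)^{-3/2}(1+t)^{-5/4}$, which is much faster. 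So the dominant term is the pressure gradient at rate $(1+t)^{-5/4}$, giving $\|u_t\|_{L^2}\lesssim(1+t)^{-5/4}$.

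For the magnetic field, from \eqref{1.1}$_3$,
\[
B_t=\Delta B+\operatorname{curl}(u\times B)-\operatorname{curl}\!\left[\frac{(\operatorname{curl}B)\times B}{\rho}\right].
\]
Here $\|\Delta B\|_{L^2}=\|\nabla^2 B\|_{L^2}\lesssim(1+t)^{-7/4}$ by the $m=2$ case of \eqref{Decay1}. The term $\operatorname{curl}(u\times B)$ expands into products like $u\cdot\nabla B$, $B\cdot\nabla u$, $B\operatorname{div}u$, each bounded by $\|u\|_{L^\infty}\|\nabla B\|_{L^2}$ or $\|B\|_{L^\infty}\|\nabla u\|_{L^2}$, which by \eqref{Decay1} decay at least like $(1+t)^{-5/4}\cdot(1+t)^{-5/4}=(1+t)^{-5/2}$ or $(1+t)^{-3/2}\cdot(1+t)^{-5/4}$, both faster than $(1+t)^{-7/4}$. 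The Hall term is cubic-type: $\operatorname{curl}[((\operatorname{curl}B)\times B)/\rho]$ involves two derivatives on $B$ times $B$ (plus lower-order pieces from differentiating $1/\rho$), so it is bounded by $\|\nabla^2 B\|_{L^2}\|B\|_{L^\infty}+\|\nabla B\|_{L^3}^2+\cdots\lesssim(1+t)^{-7/4}(1+t)^{-3/2}$, again faster. Hence $\|B_t\|_{L^2}\lesssim(1+t)^{-7/4}$.

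The main obstacle, and the only place any care is needed, is bookkeeping: several of these estimates require the second-order spatial-derivative decay (rate $(1+t)^{-7/4}$) rather than the $H^{2-k}$-norm decay $(1+t)^{-5/4}$ read directly off \eqref{Decay1}. Strictly, \eqref{Decay1} with $k=1$ only gives $\|\nabla u\|_{H^1}\lesssim(1+t)^{-5/4}$, so to get $\|\nabla^2 u\|_{L^2}\lesssim(1+t)^{-7/4}$ and $\|\nabla^2 B\|_{L^2}\lesssim(1+t)^{-7/4}$ one must either have such estimates available from the proof of Theorem \ref{THM1} (they are — the statement $\|\nabla^2 B\|_{L^2}\lesssim(1+t)^{-7/4}$ is announced in the introduction) or argue that the $u_t$ bound $(1+t)^{-5/4}$ survives even if one only uses $\|\nabla^2 u\|_{L^2}\lesssim(1+t)^{-5/4}$ (it does, since that term is then merely $O((1+t)^{-5/4})$, matching the pressure term). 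I would therefore first isolate from Section (the global-existence/decay section) the needed derivative-level estimates, state them as a preliminary lemma if not already present, and then the three estimates above follow by straightforward product estimates and the embeddings $H^1\hookrightarrow L^6$, $H^2\hookrightarrow L^\infty$.
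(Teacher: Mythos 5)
Your proposal is correct and follows essentially the same route as the paper: read $\rho_t$, $u_t$, $B_t$ off the equations, estimate the nonlinear products by H\"older/Sobolev, and insert the decay rates of Theorem \ref{THM1} (the paper does this in its Lemma on the time derivatives, working with the perturbation form \eqref{eq1} and the source terms $S_1,S_2,S_3$ rather than the original form, which is immaterial). Your bookkeeping remark is also resolved exactly as in the paper: $\|\nabla^2 B\|_{L^2}\lesssim(1+t)^{-7/4}$ is already the $m=2$ case of \eqref{Decay1}, and for $u_t$ the rate $\|\nabla^2 u\|_{L^2}\lesssim(1+t)^{-5/4}$ suffices since it matches the pressure-gradient term.
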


Furthermore, we establish optimal decay rates for the higher order spatial derivatives of
classical solutions to the compressible Hall-MHD equations.

\begin{theo}\label{THM3}
Assume that the initial data $(\rho_0-1,u_0,B_0)\in H^3\cap L^1$ and there exists
a small constant $\varepsilon_0>0$ such that
\begin{equation}\label{smallness}
\|(\rho_0-1,u_0, B_0)\|_{H^3} \le \varepsilon_0,
\end{equation}
then the global classical solution $(\rho, u, B)$ of the problem \eqref{1.1}-\eqref{1.3}
has the time decay rates
\begin{equation}\label{Decay3}
\begin{aligned}
&\|\nabla^k(\rho-1)(t)\|_{H^{3-k}}+\|\nabla^k u(t)\|_{H^{3-k}}\le C(1+t)^{-\frac{3+2k}{4}},\\
&\|\nabla^m B(t)\|_{H^{3-m}}\le C(1+t)^{-\frac{3+2m}{4}},\\
\end{aligned}
\end{equation}
where $k=0, 1, 2,$ and $m=0,1,2,3.$
\end{theo}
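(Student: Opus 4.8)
Set $\sigma=\rho-1$ and rewrite \eqref{1.1} as the perturbation
\begin{equation*}
\sigma_t+\mathrm{div}\,u=f_1,\qquad u_t-\mu\Delta u-(\mu+\nu)\nabla\mathrm{div}\,u+\nabla\sigma=f_2,\qquad B_t-\Delta B=f_3,\quad\mathrm{div}\,B=0
\end{equation*}
of the linearized system, where $f_1=-\mathrm{div}(\sigma u)$, $f_2$ gathers the convective, pressure-correction and Lorentz-force terms, and $f_3=\mathrm{curl}(u\times B)-\mathrm{curl}\big[((\mathrm{curl}\,B)\times B)/\rho\big]$ contains the Hall term; all the $f_i$ are at least quadratic. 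Since $H^3\cap L^1\subset H^2\cap L^1$, Theorem~\ref{THM1} already supplies a global solution satisfying \eqref{Decay1} and Theorem~\ref{THM2} the decay of the time derivatives, so it remains to close the $H^3$ a priori estimate (which gives the global classical solution) and to upgrade \eqref{Decay1} to \eqref{Decay3} --- i.e.\ to gain an extra half-power for $\nabla^2(\sigma,u)$ and to control $\nabla^3(\sigma,u)$ and $\nabla^3 B$. The plan is: (i) close the $H^3$ energy estimate so that the dissipation reaches $\nabla^4 u$, $\nabla^4 B$, $\nabla^3\sigma$; (ii) run Schonbek's Fourier-splitting argument one derivative level higher than in Theorem~\ref{THM1}; (iii) feed the known rates \eqref{Decay1} into the nonlinear fluxes to close a Gronwall inequality.

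\textbf{Step 1: the $H^3$ energy estimate.} First I would apply $\nabla^\ell$ ($\ell=0,\dots,3$) to the system, pair with $\nabla^\ell(\sigma,u,B)$ and sum; since $\sigma$ has no direct parabolic regularization, I would also add a small multiple of $\frac{d}{dt}\sum_{\ell\le2}\int\nabla^\ell u\cdot\nabla^{\ell+1}\sigma\,dx$ to recover $\|\nabla\sigma\|_{H^2}^2$ in the dissipation. Using the smallness \eqref{smallness} and Gagliardo--Nirenberg inequalities to absorb the nonlinear terms, this yields $\mathcal{E}_3(t)^2\approx\|(\sigma,u,B)(t)\|_{H^3}^2$ with $\frac{d}{dt}\mathcal{E}_3^2+c_0(\|\nabla\sigma\|_{H^2}^2+\|\nabla u\|_{H^3}^2+\|\nabla B\|_{H^3}^2)\le0$, hence the global classical solution and $\int_0^\infty(\|\nabla u\|_{H^3}^2+\|\nabla B\|_{H^3}^2)\,dt<\infty$; the same computation restricted to the $\nabla^2$ and $\nabla^3$ blocks gives $\frac{d}{dt}\mathcal{G}(t)^2+c_0(\|\nabla^3\sigma\|^2+\|\nabla^3u\|_{H^1}^2+\|\nabla^3B\|_{H^1}^2)\le C(\text{lower-order nonlinear terms})$ with $\mathcal{G}(t)^2\approx\|\nabla^2(\sigma,u,B)(t)\|_{H^1}^2$.

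\textbf{Step 2: Fourier splitting and the low-frequency piece.} Next I would localize the dissipation to $\{|\xi|\ge R(t)\}$, where it bounds below $R(t)^2\|\nabla^2(\sigma,u,B)\|_{H^1}^2$ minus the remainder $\int_{|\xi|\le R(t)}(|\xi|^4+|\xi|^6)|\widehat{(\sigma,u,B)}|^2\,d\xi$; choosing $R(t)^2=\gamma/(1+t)$ with $\gamma$ large and multiplying by $(1+t)^{c_0\gamma}$ turns Step~1 into a differential inequality whose right-hand side is this remainder (times $(1+t)^{c_0\gamma-1}$) plus nonlinear contributions. The remainder I would estimate via Duhamel's formula $\widehat U(\xi,t)=e^{t\mathcal{A}(\xi)}\widehat{U_0}+\int_0^t e^{(t-s)\mathcal{A}(\xi)}\widehat{\mathcal{N}}(s)\,ds$ for $U=(\sigma,u,B)$, the low-frequency bound $|e^{t\mathcal{A}(\xi)}|\lesssim e^{-c|\xi|^2t}$ on the linearized symbol $\mathcal{A}(\xi)$, and the fact that $\mathcal{N}$ is a divergence of quadratics with $\|\mathcal{N}(s)\|_{L^1}\lesssim(1+s)^{-3/2}$ by Cauchy--Schwarz and \eqref{Decay1}; together with $\int_{|\xi|\le R(t)}|\xi|^{2j}e^{-c|\xi|^2t}\,d\xi\lesssim(1+t)^{-3/2-j}$ this makes the right-hand side time-integrable, and dividing back gives $\|\nabla^2(\sigma,u,B)(t)\|_{H^1}\lesssim(1+t)^{-7/4}$. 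For $\nabla^3 B$ I would repeat the argument on $\|\nabla^3B\|_{L^2}^2$ (dissipation $\|\nabla^4B\|_{L^2}^2$), or equivalently use $B(t)=e^{t\Delta}B_0+\int_0^t e^{(t-s)\Delta}f_3(s)\,ds$ with $\|\nabla^k e^{t\Delta}g\|_{L^2}\lesssim(1+t)^{-3/4-k/2}\|g\|_{L^1\cap H^k}$ and a split of the $s$-integral at $t/2$; the extra decay of $\|f_3(s)\|_{L^1}$ and $\|\nabla f_3(s)\|_{L^2}$ coming from \eqref{Decay1} and from $\|\nabla^2B(s)\|_{L^2}\lesssim(1+s)^{-7/4}$ (Theorem~\ref{THM1}) forces $\|\nabla^3B(t)\|_{L^2}\lesssim(1+t)^{-9/4}$. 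Combining with \eqref{Decay1} yields \eqref{Decay3}.

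\textbf{Main obstacle.} The hard part will be the top-order nonlinear terms --- quadratic expressions carrying three derivatives, such as $u\cdot\nabla^3u$, $(\nabla B)\cdot\nabla^3B$ and, above all, $\mathrm{curl}[((\mathrm{curl}\,B)\times B)/\rho]$ differentiated three times --- on which no decaying smallness sits on the highest-derivative factor. These have to be treated by integrating by parts to put a derivative on the lower-order (hence faster-decaying) factor, using the $L^\infty$-bounds from the Remark after Theorem~\ref{THM1} and absorbing a small fraction of $\|\nabla^4u\|_{L^2}^2+\|\nabla^4B\|_{L^2}^2$ into the dissipation; and since several of these estimates use the very rates being proved, Steps~1--2 have to be run inside a continuity/bootstrap argument. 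Keeping the nonlinear flux $L^1$-integrable in time in the low-frequency estimate is the other place where the accounting must be done carefully.
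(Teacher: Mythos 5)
Your proposal is correct and follows essentially the same route as the paper: energy identities at the $\nabla^2$/$\nabla^3$ level, a cross term $\int\nabla^2 u\cdot\nabla^3\varrho\,dx$ to recover the density dissipation $\|\nabla^3\varrho\|_{L^2}^2$, and Schonbek's Fourier splitting fed with the already-established lower-order rates \eqref{Decay1}, in the same order (first $\nabla^2 B$, then $\nabla^2(\varrho,u)$, then $\nabla^3 B$). The only differences are cosmetic: the paper cites Fan et al.\ \cite{Fan-Zhou} for the uniform $H^3$ bound \eqref{smallness2} rather than re-deriving it, and it disposes of the low-frequency remainder in the splitting by the elementary bound $\int_{S_0}|\xi|^{2k}|\hat{U}|^2\,d\xi\le\left(\tfrac{R}{1+t}\right)^2\|\nabla^{k-1}U\|_{L^2}^2$ together with the known decay of the lower-order derivative, instead of your Duhamel representation of the low frequencies.
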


\begin{rema}
Compared with the decay rates of linearized systems of
\eqref{1.1} stated in Proposition \ref{linearized-Decay},
\eqref{Decay3} gives optimal decay rates
of the solutions and its spatial derivatives (except for the
third order spatial derivatives of density and velocity)
in $L^2$-norm to the nonlinear problem \eqref{1.1}-\eqref{1.3}.
Here the decay rate of solutions to nonlinear system is optimal
in the sense that it coincides with the rate of solutions to the linearized systems.
\end{rema}

\begin{rema}
By virtue of the Sobolev inequality and the results \eqref{Decay3} in Theorem \ref{THM3},
then the global classical solution $(\rho, u, B)$ has the time decay rates
$$
\begin{aligned}
&\|(\rho-1)(t)\|_{L^p}+\|u (t)\|_{L^p} \le C(1+t)^{-\frac{3}{2}\left(1-\frac{1}{p}\right)},\\
&\|\nabla^k B(t)\|_{L^p} \le C(1+t)^{-\frac{3}{2}\left(1-\frac{1}{p}\right)-\frac{k}{2}},
\end{aligned}
$$
where $k=0,1$, and $p \in [2, \infty]$.
Hence, the decay rate of classical solution $(\rho, u, B)$
converging to the equilibrium state
$(1, 0, 0)$ in $L^\infty$-norm is $(1+t)^{-\frac{3}{2}}$.
\end{rema}

\begin{rema}
It is easy to see that \eqref{Decay3} provides faster time decay rates
for higher order spatial derivatives of global classical solutions than \eqref{1.4}.
Hence, the results in Theorem \ref{THM3} improve the work of Fan et al. \cite{Fan-Zhou}.
\end{rema}

\begin{rema}
Although we only established the time decay rates under the $H^3$-framework
in Theorem \ref{THM3}, the method here can be
applied to the $H^N(N \ge 3)$-framework just following the idea
as Gao et al.\cite{Gao-Tao-Yao}. Hence,
if $(\rho_0-1, u_0, B_0)\in H^N \cap L^1(N \ge3)$,
then the global solution $(\rho, u, B)$ has the time decay rates
$$
\begin{aligned}
&\|\nabla^k(\rho-1)(t)\|_{H^{N-k}}+\|\nabla^k u(t)\|_{H^{N-k}}\le C(1+t)^{-\frac{3+2k}{4}},\\
&\|\nabla^m B(t)\|_{H^{N-m}}\le C(1+t)^{-\frac{3+2m}{4}},\\
\end{aligned}
$$
where $k=0, 1,..., N-1,$ and $m=0,1,2,..., N.$
\end{rema}

Finally, we build decay rates for the mixed space-time derivatives of global classical solutions.

\begin{theo}\label{THM4}
Under all the assumptions in Theorem \ref{THM3}, the global classical solution $(\rho, u, B)$
of the problem \eqref{1.1}-\eqref{1.3} satisfies the time decay rates
$$
\begin{aligned}
&\|\nabla^k \rho_t(t)\|_{H^{2-k}}+\|\nabla^k u_t(t)\|_{L^2}\le C(1+t)^{\frac{5+2k}{4}},\\
&\|\nabla^k B_t(t)\|_{L^2}\le C(1+t)^{-\frac{7+2k}{4}},
\end{aligned}
$$
where $k=0,1$.
\end{theo}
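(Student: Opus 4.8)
The plan is to differentiate the equations \eqref{1.1} in time and bound the resulting time derivatives $\rho_t$, $u_t$, $B_t$ (and their first spatial derivatives) pointwise in $t$ directly from the already-established decay rates \eqref{Decay3} of Theorem \ref{THM3}, rather than running a fresh energy estimate. Indeed, from $\eqref{1.1}_1$ we have $\rho_t=-u\cdot\nabla\rho-\rho\,\mathrm{div}\,u$, so $\nabla^k\rho_t$ is a sum of products of spatial derivatives of $\rho-1$ and $u$ of total order $k+1$; from $\eqref{1.1}_2$, writing $u_t=-u\cdot\nabla u-\frac{P'(\rho)}{\rho}\nabla\rho+\frac{1}{\rho}\big(\mu\Delta u+(\mu+\nu)\nabla\mathrm{div}\,u\big)+\frac{1}{\rho}(\mathrm{curl}\,B)\times B$, we see $u_t$ is controlled by $\nabla\rho$, $\nabla^2 u$, $\nabla u$ and the magnetic quadratic term $B\cdot\nabla B$; and from $\eqref{1.1}_3$, $B_t=\Delta B+\mathrm{curl}(u\times B)-\mathrm{curl}\!\big[\frac{(\mathrm{curl}\,B)\times B}{\rho}\big]$, which involves $\nabla^2 B$, the bilinear terms $u\otimes B$ differentiated once, and the Hall term $\frac{1}{\rho}(\mathrm{curl}\,B)\times B$ differentiated once.

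The key steps, in order: (i) Fix the representations above and, for $\nabla^k\rho_t$ with $k=0,1$, estimate each product by Hölder, placing the lower-order factor in $L^\infty$ via the Sobolev embedding $H^2\hookrightarrow L^\infty$ and the top-order factor in $L^2$; since the relevant norms of $\rho-1$ and $u$ satisfy $\|\nabla^j(\rho-1,u)\|_{L^2}\lesssim(1+t)^{-(3+2j)/4}$ for $j\le 3$ by \eqref{Decay3}, the worst product contributing to $\|\nabla^k\rho_t\|_{H^{2-k}}$ carries $k+1$ derivatives in all and thus decays like $(1+t)^{-(5+2k)/4}$ (the stated exponent — note the sign typo in the theorem, which should read $-\frac{5+2k}{4}$). (ii) Treat $\|\nabla^k u_t\|_{L^2}$ the same way: the linear part is bounded by $\|\nabla^{k+1}\rho\|_{L^2}+\|\nabla^{k+2}u\|_{L^2}\lesssim(1+t)^{-(5+2k)/4}$, and the quadratic magnetic term $\nabla^k(B\cdot\nabla B/\rho)$ is bounded, after distributing derivatives and using $\|\rho-1\|_{L^\infty}$ small together with $H^2\hookrightarrow L^\infty$, by $\|B\|_{\infty}\|\nabla^{k+1}B\|_{L^2}+\cdots\lesssim(1+t)^{-3/2}(1+t)^{-(3+2(k+1))/4}$, which decays faster than required. (iii) For $\|\nabla^k B_t\|_{L^2}$ with $k=0,1$ use the analogous splitting; here the gain is that every term carries at least two "units" of magnetic decay or one magnetic plus one velocity unit, and since $\|\nabla^j B\|_{L^2}\lesssim(1+t)^{-(3+2j)/4}$ for $j\le 3$ one checks that $\Delta B$ contributes $\|\nabla^{k+2}B\|_{L^2}\lesssim(1+t)^{-(7+2k)/4}$, the term $\mathrm{curl}(u\times B)$ contributes (distributing $k+1$ derivatives, one factor in $L^\infty$) at worst $(1+t)^{-5/4}\cdot(1+t)^{-(3+2(k+1))/4}$, and the Hall term, being cubic-type in $B$ up to the $1/\rho$ factor, decays even faster; the dominant rate is therefore $(1+t)^{-(7+2k)/4}$ as claimed.

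The main obstacle I anticipate is not any single estimate but the bookkeeping for the Hall term and for $u_t$ at $k=1$: when $k=1$ the product $\nabla(B\cdot\nabla B/\rho)$ forces a term with $\nabla^2 B$ paired against $B$, and one must be careful that $\|\nabla^2 B\|_{L^2}$ is indeed available at the rate $(1+t)^{-7/4}$ (it is, by \eqref{Decay3} with $m=2$, which in turn relied on the Fourier-splitting argument sketched in the introduction), and that the factor in $L^\infty$ — here $\|B\|_{L^\infty}\lesssim(1+t)^{-3/2}$ or $\|\nabla B\|_{L^\infty}\lesssim\|\nabla B\|_{H^2}\lesssim(1+t)^{-5/4}$ — is assigned to keep the product integrable; a parallel care is needed for the terms in $u_t$ where $\nabla^3 u$ appears, since $\|\nabla^3 u\|_{L^2}$ only decays like $(1+t)^{-9/4}$, which is still better than the target $(1+t)^{-7/4}$. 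Once the factor-placement is chosen so that each nonlinear product pairs a top-order $L^2$ factor with a lower-order $L^\infty$ factor whose decay rate is summed in, the claimed rates follow by inspection, uniformly for all $t\ge 0$ (the $(1+t)$ rather than $t$ in the bounds being automatic since all these norms are also bounded near $t=0$ by the global existence estimate in Theorem \ref{THM3}).
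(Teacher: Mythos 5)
Your proposal is correct and follows essentially the same route as the paper's own proof (Lemma \ref{lemma5.1}): solve the equations for $\rho_t$, $u_t$, $B_t$, distribute the $k$ spatial derivatives, and bound each product via H\"older/Sobolev using the spatial decay rates of Theorem \ref{THM3}, with the lower-order factor placed in $L^\infty$. One small caveat: \eqref{Decay3} only yields $\|\nabla^3 u\|_{L^2}\lesssim(1+t)^{-7/4}$ (not $(1+t)^{-9/4}$ as you assert, since the third-order derivatives of $\rho$ and $u$ do not get the optimal rate), but this still exactly meets the target $(1+t)^{-(5+2k)/4}$ for $k=1$; you are also right that the exponent in the first displayed line of the theorem carries a sign typo and should read $(1+t)^{-\frac{5+2k}{4}}$.
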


This paper is organized as follows.
In section $2$, we establish some energy estimates that will
play an essential role for us to construct the global existence of strong solutions.
Then, we close the estimates by the standard continuity argument
and the global existence of strong solutions follows immediately.
Furthermore, we build the time decay rates by taking the method of Green function
and establish optimal time decay rates for the second order spatial derivatives
of magnetic field.
Finally, we also study decay rates for the time derivatives of density, velocity
and magnetic field.
In section $3$, we establish the optimal decay rates for the higher order spatial
derivatives of global classical solutions and mixed space-time derivatives of
solutions.

\section{Proof of Theorem \ref{THM1} and Theorem \ref{THM2}}

\quad In this section, we will establish global existence and optimal time decay rates
of strong solutions for the compressible Hall-MHD equations.
Indeed, computing directly, it is easy to deduce
$$
({\rm curl} B) \times B=(B\cdot \nabla)B-\frac{1}{2}\nabla (|B|^2),
$$
and
$$
{\rm curl}{(u \times B)}=u({\rm div}B)-(u\cdot \nabla) B+(B\cdot \nabla)u-B({\rm div}u).
$$
Then, denoting $\varrho=\rho-1$, we rewrite \eqref{1.1} in the perturbation form as
\begin{equation}\label{eq1}
\left\{
\begin{aligned}
&\varrho_t+{\rm div}u=S_1,\\
&u_t-\mu \Delta u-(\mu+\nu)\nabla {\rm div}u+\nabla \varrho=S_2,\\
&B_t-\Delta B=S_3, \quad {\rm div}B=0,
\end{aligned}
\right.
\end{equation}
where the function $S_i(i=1,2,3)$ is defined as
\begin{equation}\label{eq2}
\left\{
\begin{aligned}
&S_1=-\varrho{\rm div}u-u\cdot \nabla \varrho,\\
&\!S_2\!=-u\!\cdot \!\nabla u-\!h(\varrho)[\mu\Delta u+(\mu+\nu)\nabla {\rm div}u]
 -f(\varrho)\nabla \varrho+g(\varrho)\!\left[B\cdot \nabla B-\frac{1}{2}\nabla(|B|^2)\!\right],\\
&S_3=-u\cdot \nabla B+B\cdot \nabla u-B{\rm div}u
-{\rm curl}\left[g(\varrho)\!\left(B\cdot \nabla B-\frac{1}{2}\nabla(|B|^2)\!\right)\right].
\end{aligned}
\right.
\end{equation}
Here the nonlinear function of $\varrho$ is defined by
\begin{equation}\label{eq3}
h(\varrho)=\frac{\varrho}{\varrho+1}, \quad
f(\varrho)=\frac{P'(\varrho+1)}{\varrho+1}-1, \quad
g(\varrho)=\frac{1}{\varrho+1}.
\end{equation}
The initial data are given as
\begin{equation}\label{eq4}
\left.(\varrho, u, B)(x,t)\right|_{t=0}=(\varrho_0, u_0, B_0)(x)
\rightarrow(0,0,0) \quad {\text {as} } \quad |x|\rightarrow \infty.
\end{equation}

\subsection{Energy estimates}
\quad First of all, suppose there exists a small positive constant $\delta$
satisfying following estimate
\begin{equation}\label{2.5}
\|(\varrho, u, B)(t)\|_{H^2}
:=\|\varrho(t)\|_{H^2}+\|u(t)\|_{H^2}+\| B(t)\|_{H^2} \le \delta,
\end{equation}
which, together with Sobolev inequality, yields directly
$$
\frac{1}{2}\le \varrho+1 \le \frac{3}{2}.
$$
Hence, we immediately have
\begin{equation}\label{2.6}
|f(\varrho)|, |h(\varrho)| \le C|\varrho| \quad
{\text{and}} \quad
|g^{(k-1)}(\varrho)|, |h^{(k)}(\varrho)|, |f^{(k)}(\varrho)| \le C
\quad {\text{for any}} ~ k \ge 1,
\end{equation}
which will be used frequently to derive a priori estimates.

We state the classical Sobolev interpolation of the Gagliardo-Nirenberg inequality, refer to \cite{Nirenberg}.
\begin{lemm}\label{lemma2.1}
Let $0 \le m, \alpha \le l$ and the function $f\in C_0^\infty(\mathbb{R}^3)$, then we have
\begin{equation}\label{GN}
\|\nabla^\alpha f\|_{L^p} \lesssim \|\nabla^m f \|_{L^2}^{1-\theta} \|\nabla^l f \|_{L^2}^\theta,
\end{equation}
where $0\le \theta \le 1$ and $\alpha$ satisfy
$$
\frac{1}{p}-\frac{\alpha}{3}=\left(\frac{1}{2}-\frac{m}{3}\right)(1-\theta)
+\left(\frac{1}{2}-\frac{l}{3}\right)\theta.
$$
\end{lemm}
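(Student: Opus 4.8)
\emph{Proof proposal.} Since both factors on the right-hand side of \eqref{GN} are $L^2$ norms, the natural plan is to argue on the Fourier side. By Plancherel's identity together with $\sum_{|\beta|=j}|\xi^\beta|^2\approx|\xi|^{2j}$, one has $\|\nabla^j f\|_{L^2}\approx\big\||\xi|^j\hat f\big\|_{L^2}$ for every nonnegative integer $j$, and likewise $\|\nabla^\alpha f\|_{\dot H^{s}}\approx\big\||\xi|^{\alpha+s}\hat f\big\|_{L^2}$ for $s\ge0$. Thus it suffices to bound $\|\nabla^\alpha f\|_{L^p}$ by $\big\||\xi|^m\hat f\big\|_{L^2}^{1-\theta}\big\||\xi|^l\hat f\big\|_{L^2}^{\theta}$, and I would do this first for $p=2$ and then upgrade to $2<p\le\infty$.

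For $p=2$ the exponent relation collapses to $\alpha=(1-\theta)m+\theta l$, hence $|\xi|^\alpha=\big(|\xi|^m\big)^{1-\theta}\big(|\xi|^l\big)^{\theta}$. Writing the integrand of $\big\||\xi|^\alpha\hat f\big\|_{L^2}^2$ as $\big(|\xi|^m|\hat f|\big)^{2(1-\theta)}\big(|\xi|^l|\hat f|\big)^{2\theta}$ and applying H\"older's inequality with conjugate exponents $\tfrac1{1-\theta}$ and $\tfrac1\theta$ yields exactly the claimed bound; Plancherel then gives \eqref{GN} when $p=2$.

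For $2<p<\infty$ I would set $s=3\big(\tfrac12-\tfrac1p\big)\in[0,\tfrac32)$ and invoke the homogeneous Sobolev embedding $\dot H^{s}(\mathbb R^3)\hookrightarrow L^p(\mathbb R^3)$ applied to $\nabla^\alpha f$, so that $\|\nabla^\alpha f\|_{L^p}\lesssim\|\nabla^\alpha f\|_{\dot H^s}\approx\big\||\xi|^{\,\alpha+s}\hat f\big\|_{L^2}$. The exponent relation in the Lemma is precisely the identity $\alpha+s=(1-\theta)m+\theta l$, and this number lies between $m$ and $l$ because $0\le\theta\le1$; hence the weighted-$L^2$ H\"older interpolation of the previous step (with $\alpha$ there replaced by $\alpha+s$) bounds the right-hand side by $\big\||\xi|^m\hat f\big\|_{L^2}^{1-\theta}\big\||\xi|^l\hat f\big\|_{L^2}^{\theta}$, which is what we want.

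The step I expect to need the most care is the endpoint $p=\infty$, where the embedding above degenerates. There I would instead use $\|\nabla^\alpha f\|_{L^\infty}\le\big\||\xi|^\alpha\hat f\big\|_{L^1}$, split the integral at frequency $R$, estimate the low part by $\big(\int_{|\xi|\le R}|\xi|^{2(\alpha-m)}d\xi\big)^{1/2}\big\||\xi|^m\hat f\big\|_{L^2}\approx R^{\alpha-m+3/2}\big\||\xi|^m\hat f\big\|_{L^2}$ and the high part by $R^{\alpha-l+3/2}\big\||\xi|^l\hat f\big\|_{L^2}$ via Cauchy-Schwarz (the integrability conditions $\alpha-m>-\tfrac32$ and $\alpha-l<-\tfrac32$ being guaranteed by the exponent relation once $0<\theta<1$ and $m<l$), and optimize over $R$. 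I should also record that the homogeneous norms used above cause no trouble at low frequencies since $f\in C_0^\infty(\mathbb R^3)$ makes $\hat f$ Schwartz and $s<\tfrac32$. Of course \eqref{GN} is the classical Gagliardo-Nirenberg inequality of Nirenberg \cite{Nirenberg}; an equivalent, entirely elementary route bypasses the Fourier transform by first establishing the one-dimensional interpolation estimate, then integrating in the remaining variables and iterating on the order of differentiation.
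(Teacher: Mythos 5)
Your proposal is correct, but there is nothing in the paper to compare it against: the paper states Lemma \ref{lemma2.1} as the classical Gagliardo--Nirenberg inequality and simply cites Nirenberg \cite{Nirenberg}, giving no proof. Your Fourier-side argument is a legitimate self-contained proof on the range $2\le p\le\infty$, which is the only range the paper ever uses: the Plancherel--H\"older step for $p=2$, the reduction of $2<p<\infty$ to the weighted-$L^2$ interpolation via $\dot H^{s}\hookrightarrow L^p$ with $s=3(\tfrac12-\tfrac1p)$ together with the identity $\alpha+s=(1-\theta)m+\theta l$, and the frequency-splitting/optimization argument at $p=\infty$ are all sound, and you are right to flag that the $L^\infty$ case needs $0<\theta<1$ and $m<l$ for the two integrability conditions (the excluded endpoint $\|f\|_{L^\infty}\lesssim\|\nabla^{3/2}f\|_{L^2}$ is genuinely false). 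One caveat worth recording: as stated, the lemma formally permits $p<2$ (e.g.\ $m=0$, $l=2$, $\alpha=1$, $\theta=0$ forces $p=6/5$), and there the inequality with two $L^2$-based factors on the right fails (sum $N$ widely separated bumps: the left side grows like $N^{5/6}$, the right like $N^{1/2}$). That is a defect of the folklore statement rather than of your proof, which sensibly never enters that range; a careful version would add the restriction $p\ge 2$, under which every application in the paper is covered.
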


First of all, we will derive following energy estimates.

\begin{lemm}\label{lemma2.2}
Under the condition \eqref{2.5}, then for $k=0, 1$, we have
\begin{equation}\label{221}
\frac{d}{dt} \|\nabla^k (\varrho, u, B)\|_{L^2}^2 +C\|\nabla^{k+1} (u, B) \|_{L^2}^2
\lesssim \delta \|\nabla^{k+1} \varrho\|_{L^2}^2.
\end{equation}
\end{lemm}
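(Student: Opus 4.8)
The plan is to carry out a standard energy estimate on the perturbation system \eqref{eq1}. For the zeroth-order estimate ($k=0$), I would multiply \eqref{eq1}$_1$ by $\varrho$, \eqref{eq1}$_2$ by $u$, \eqref{eq1}$_3$ by $B$, integrate over $\mathbb{R}^3$, and add. The key structural cancellation is that the linear coupling between the $\nabla\varrho$ term in the momentum equation and the $\operatorname{div} u$ term in the continuity equation disappears after integration by parts: $\int \nabla\varrho\cdot u\,dx + \int \operatorname{div} u\,\varrho\,dx = 0$. The viscosity terms $-\mu\Delta u - (\mu+\nu)\nabla\operatorname{div}u$ and the magnetic diffusion $-\Delta B$ produce the good dissipation $\mu\|\nabla u\|_{L^2}^2 + (\mu+\nu)\|\operatorname{div} u\|_{L^2}^2 + \|\nabla B\|_{L^2}^2$, which, since $2\mu+3\nu\ge 0$ and $\mu>0$, dominates $C\|\nabla(u,B)\|_{L^2}^2$. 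What remains is to bound $\int(S_1\varrho + S_2 u + S_3 B)\,dx$.

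For the first-order estimate ($k=1$), I would apply $\nabla$ to each equation in \eqref{eq1}, multiply by $\nabla\varrho$, $\nabla u$, $\nabla B$ respectively, integrate, and sum; the same linear cancellation and dissipation structure reappears at one derivative higher, yielding $C\|\nabla^2(u,B)\|_{L^2}^2$ on the left. The right-hand side becomes $\int(\nabla S_1\cdot\nabla\varrho + \nabla S_2\cdot\nabla u + \nabla S_3\cdot\nabla B)\,dx$.

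The main work — and the main obstacle — is estimating the nonlinear terms and showing every contribution is either absorbed into the dissipation or bounded by $\delta\|\nabla^{k+1}\varrho\|_{L^2}^2$ (the one term that is \emph{not} controlled by the left-hand side dissipation, since there is no $\varrho$-dissipation in a compressible system). The strategy for each term in $S_1, S_2, S_3$ from \eqref{eq2}: use Hölder's inequality to split the product, then the Gagliardo–Nirenberg inequality of Lemma \ref{lemma2.1} to put the low-regularity factor in $L^3$ or $L^6$ and the high-regularity factor in $L^2$ or $L^6$, and finally the a priori smallness bound \eqref{2.5} together with \eqref{2.6} to extract a factor of $\delta$. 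For instance, terms like $u\cdot\nabla\varrho$, $\varrho\operatorname{div}u$, $u\cdot\nabla u$, $h(\varrho)\Delta u$, $f(\varrho)\nabla\varrho$, $g(\varrho)B\cdot\nabla B$ and their $\nabla$-derivatives all reduce, after these steps, to $\delta$ times terms of the form $\|\nabla^{k+1}\varrho\|_{L^2}^2$, $\|\nabla^{k+1} u\|_{L^2}^2$, $\|\nabla^{k+2} u\|_{L^2}^2$, $\|\nabla^{k+1} B\|_{L^2}^2$, or $\|\nabla^{k+2} B\|_{L^2}^2$. The dangerous ones are those generating $\|\nabla^{k+2} u\|_{L^2}^2$ (from $h(\varrho)\Delta u$) or $\|\nabla^{k+2} B\|_{L^2}^2$ (from the Hall term $\operatorname{curl}[g(\varrho)(B\cdot\nabla B - \tfrac12\nabla|B|^2)]$, which contains second derivatives of $B$): these look like they exceed the available dissipation order. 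The resolution is that each such term carries a genuine factor of $\delta$ from \eqref{2.5}/\eqref{2.6} multiplying a factor that is \emph{itself} of lower order (e.g. $\|\varrho\|_{H^2}$ or $\|B\|_{H^2}$), so after one more application of Gagliardo–Nirenberg the offending high-order norm appears only to a fractional power and is absorbed by Young's inequality into $C\|\nabla^{k+1}(u,B)\|_{L^2}^2$ on the left, at the cost of a small constant. Special care is needed for the Hall term: one must integrate by parts to move a derivative off the highest-order factor before estimating, exploiting the $\operatorname{curl}$ structure and $\operatorname{div} B = 0$. Collecting everything, all nonlinear contributions are bounded by $C\delta(\|\nabla^{k+1}(u,B)\|_{L^2}^2 + \|\nabla^{k+1}\varrho\|_{L^2}^2 + \|\nabla^{k+2}(u,B)\|_{L^2}^2)$; absorbing the $(u,B)$-dissipation pieces into the left and noting that at this stage one does not yet try to control $\|\nabla^{k+2}(u,B)\|_{L^2}^2$ (it will be handled when \eqref{221} is combined with a higher-order estimate and the dissipation is recovered globally), or alternatively choosing $\delta$ small enough that these are absorbed once the full hierarchy is summed, yields \eqref{221}.
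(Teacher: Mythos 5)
Your overall skeleton matches the paper's proof: apply $\nabla^k$, test with $\nabla^k(\varrho,u,B)$, use the cancellation between $\int \nabla^k\nabla\varrho\cdot\nabla^k u\,dx$ and $\int \nabla^k\mathrm{div}\,u\,\nabla^k\varrho\,dx$, keep the viscous and magnetic dissipation on the left, and estimate $\int\nabla^k S_i\cdot\nabla^k(\cdot)\,dx$ by H\"older, Gagliardo--Nirenberg, and \eqref{2.5}--\eqref{2.6}, finally absorbing the $\delta\|\nabla^{k+1}(u,B)\|_{L^2}^2$ pieces into the dissipation so that only $\delta\|\nabla^{k+1}\varrho\|_{L^2}^2$ remains. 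Up to that point you are on track.

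The gap is in your treatment of the terms you yourself flag as ``dangerous''. First, the claimed mechanism --- that $\|\nabla^{k+2}(u,B)\|_{L^2}$ appears ``only to a fractional power'' and is then ``absorbed by Young's inequality into $C\|\nabla^{k+1}(u,B)\|_{L^2}^2$'' --- cannot work: interpolation bounds lower-order norms by higher-order ones, never the reverse, so once $\|\nabla^{k+2}u\|_{L^2}^{\theta}$ with $\theta>0$ appears in a product, Young's inequality returns $\|\nabla^{k+2}u\|_{L^2}^2$ at full strength, which the $(k+1)$-order dissipation on the left cannot dominate. Second, your fallback --- leaving $\delta\|\nabla^{k+2}(u,B)\|_{L^2}^2$ on the right to be handled later when \eqref{221} is combined with higher-order estimates --- proves a strictly weaker inequality than \eqref{221}, whose right-hand side contains only $\delta\|\nabla^{k+1}\varrho\|_{L^2}^2$. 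The correct resolution, which the paper uses, is that the $(k+2)$-order derivatives never appear at all: for the viscous term one integrates by parts in $\int\nabla^k\bigl(h(\varrho)[\mu\Delta u+(\mu+\nu)\nabla\mathrm{div}\,u]\bigr)\cdot\nabla^k u\,dx$, moving one derivative onto the test function so that only $\nabla^{k+1}u$ and $\nabla\varrho$ survive (see \eqref{227} for $k=0$ and the $\|h(\varrho)\|_{L^\infty}\|\nabla^2 u\|_{L^2}^2$ term in \eqref{2214} for $k=1$); for the Hall term one moves the outer $\mathrm{curl}$ onto $\nabla^k B$, so that both factors carry at most $k+1$ derivatives of $B$ (see \eqref{229} and \eqref{2215}). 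You mention this integration by parts for the Hall term but do not apply the same device to $h(\varrho)\Delta u$, and you do not draw the conclusion that it eliminates the $(k+2)$-order norms entirely. Once that is done, every nonlinear contribution is bounded by $\delta(\|\nabla^{k+1}\varrho\|_{L^2}^2+\|\nabla^{k+1}(u,B)\|_{L^2}^2)$ and \eqref{221} follows exactly as you describe.
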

\begin{proof}
Taking $k$-th spatial derivatives to \eqref{eq1}$_1$ and \eqref{eq1}$_2$ respectively,
multiplying the resulting identities by $\nabla^k \varrho$ and $ \nabla^k u$
respectively and integrating over $\mathbb{R}^3$(by parts), it is easy to obtain
\begin{equation}\label{222}
\begin{aligned}
&\frac{1}{2}\frac{d}{dt}\int( |\nabla^k \varrho|^2+\!|\nabla^k u|^2) dx
+\int( \mu |\nabla^{k+1} u|^2+(\mu+\nu)|\nabla^k {\rm div} u|^2) dx\\
&=\int \nabla^k S_1 \cdot \nabla^k \varrho \ dx
+\int \nabla^k S_2 \cdot \nabla^k u \ dx.
\end{aligned}
\end{equation}
Taking $k$-th spatial derivatives to \eqref{eq1}$_3$, multiplying the resulting identity
by $\nabla^k B$ and integrating over $\mathbb{R}^3$(by parts), we have
\begin{equation}\label{223}
\frac{1}{2}\frac{d}{dt}\int |\nabla^{k} B|^2 \! dx
+\int |\nabla^{k+1} B|^2dx
=\int \nabla^{k}S_3 \cdot \nabla^{k}B ~ dx.
\end{equation}
Adding \eqref{222} to \eqref{223}, it follows immediately
\begin{equation}\label{224}
\begin{aligned}
&\!\frac{1}{2}\frac{d}{dt}\!\!\int \! (|\nabla^k \varrho|^2\!+\!|\nabla^k u|^2 \!+\!|\nabla^{k} B|^2)dx
+\!\!\int \! (\mu |\nabla^{k+1} u|^2\!+\!(\mu+\!\nu)|\nabla^k {\rm div} u|^2\!
+\!|\nabla^{k+1} B|^2)dx\\
&=\int \nabla^k S_1 \cdot \nabla^k \varrho \ dx
+\int \nabla^k S_2 \cdot \nabla^k u \ dx
+\int \nabla^{k}S_3 \cdot \nabla^{k}B~ dx.
\end{aligned}
\end{equation}
For the case $k=0$, then the differential identity \eqref{224} has the following form
\begin{equation}\label{225}
\begin{aligned}
&\frac{1}{2}\frac{d}{dt}\int ( |\varrho|^2+\!|u|^2+|B|^2 ) dx
 +\int ( \mu |\nabla u|^2+(\mu+\nu)|{\rm div} u|^2+|\nabla B|^2)dx\\
&=\int S_1 \cdot \varrho \ dx +\int S_2 \cdot u \ dx+\int S_3 \cdot B \ dx=I_1+I_2+I_3.\\
\end{aligned}
\end{equation}
Applying \eqref{2.5}, Holder, Sobolev and Young inequalities, it is easy to obtain
\begin{equation}\label{226}
\begin{aligned}
I_1
&\le \|\varrho \|_{L^3} \|{\rm div} u\|_{L^2} \|\varrho\|_{L^6}+\|\varrho\|_{L^3}\|\nabla \varrho\|_{L^2}\|u\|_{L^6}\\
&\lesssim \|\varrho\|_{H^1}\|\nabla u\|_{L^2}\|\nabla \varrho \|_{L^2}
          +\|\varrho\|_{H^1}\|\nabla \varrho\|_{L^2}\|\nabla u\|_{L^2}\\
&\lesssim \delta (\|\nabla \varrho\|_{L^2}^2+ \|\nabla u\|_{L^2}^2).
\end{aligned}
\end{equation}
Integrating by parts and applying \eqref{2.6}, Holder, Sobolev and Young inequalities,
it arrives at directly
\begin{equation}\label{227}
\begin{aligned}
&-\int h(\varrho)(\mu \Delta u +(\mu+\nu)\nabla {\rm div}u)udx\\
&\approx\int (h'(\varrho)\nabla \varrho \cdot u+h(\varrho)\nabla u)\nabla u dx\\
&\lesssim \|\nabla \varrho\|_{L^2}\|u\|_{L^6}\|\nabla u\|_{L^3}+\|\varrho\|_{L^\infty}\|\nabla u\|_{L^2}^2\\
&\lesssim (\|\varrho\|_{H^2}+\|\nabla u\|_{H^1})(\|\nabla \varrho\|_{L^2}^2+\|\nabla u\|_{L^2}^2)\\
&\lesssim \delta (\|\nabla \varrho\|_{L^2}^2+\|\nabla u\|_{L^2}^2).\\
\end{aligned}
\end{equation}
Hence, with the help of \eqref{2.6}, \eqref{227}, Holder, Sobolev and
Young inequalities, we deduce
\begin{equation}\label{228}
\begin{aligned}
I_2
&\lesssim (\|u\|_{L^3}\|\nabla u\|_{L^2}+\|\varrho \|_{L^3}\|\nabla \varrho\|_{L^2}
            +\| g(\varrho)\|_{L^\infty}\| B\|_{L^3}\|\nabla B\|_{L^2})\|u\|_{L^6}\\
&\quad \   +\delta (\|\nabla \varrho\|_{L^2}^2+\|\nabla u\|_{L^2}^2)\\
&\lesssim (\|u\|_{H^1}\|\nabla u\|_{L^2}+\|\varrho\|_{H^1}\|\nabla \varrho\|_{L^2}
            +\|B\|_{H^1}\|\nabla B\|_{L^2})\|\nabla u\|_{L^2}\\
&\quad \   +\delta (\|\nabla \varrho\|_{L^2}^2+\|\nabla u\|_{L^2}^2)\\
&\lesssim \delta(\|\nabla \varrho\|_{L^2}^2+\|\nabla u\|_{L^2}^2+\|\nabla B\|_{L^2}^2).
\end{aligned}
\end{equation}
Integrating by part and applying \eqref{2.6}, Holder and Sobolev inequalities,
it arrives at
\begin{equation}\label{229}
\begin{aligned}
&-\int {\rm curl}\left[g(\varrho)(B\cdot \nabla B)\right]Bdx\\
&=-\int g(\varrho)(B\cdot \nabla B){\rm curl}Bdx\\
&\lesssim \|g(\varrho)\|_{L^\infty}\|B\|_{L^\infty}\|\nabla B\|_{L^2}\|{\rm curl} B\|_{L^2}\\
&\lesssim \|B\|_{H^2}\|\nabla B\|_{L^2}^2.
\end{aligned}
\end{equation}
Hence, with the help of \eqref{229}, Holder, Sobolev and Young inequalities, we deduce
\begin{equation}\label{2210}
\begin{aligned}
I_3
&\lesssim (\|u\|_{L^3}\|\nabla B\|_{L^2}+\|B\|_{L^3}\|\nabla u\|_{L^2})\|B\|_{L^6}
           +\|B\|_{H^2}\|\nabla B\|_{L^2}^2\\
&\lesssim (\|u\|_{H^1}+\|B\|_{H^1})(\|\nabla u\|_{L^2}^2+\|\nabla B\|_{L^2}^2)
           +\delta\|\nabla B\|_{L^2}^2\\
&\lesssim \delta(\|\nabla u\|_{L^2}^2+\|\nabla B\|_{L^2}^2).
\end{aligned}
\end{equation}
Substituting \eqref{226}, \eqref{228} and \eqref{2210} into \eqref{225}
and applying the smallness of $\delta$, it arrives at directly
\begin{equation}\label{2211}
\frac{d}{dt}\int ( |\varrho|^2+\!|u|^2+|B|^2 ) dx+\int ( \mu |\nabla u|^2+|\nabla B|^2)dx
\lesssim \delta \|\nabla \varrho\|_{L^2}^2.
\end{equation}
For the case $k=1$, then the differential identity \eqref{224} has the following form
\begin{equation}\label{2212}
\begin{aligned}
&\frac{1}{2}\frac{d}{dt}\int ( |\nabla \varrho|^2+\!|\nabla u|^2+|\nabla B|^2 ) dx
 +\int ( \mu |\nabla^2 u|^2+(\mu+\nu)|\nabla{\rm div} u|^2+|\nabla^2 B|^2)dx\\
&=\int \nabla S_1 \cdot \nabla \varrho \ dx
+\int \nabla S_2 \cdot \nabla u \ dx
+\int \nabla S_3 \cdot \nabla B \ dx=I\!I_1+I\!I_2+I\!I_3.\\
\end{aligned}
\end{equation}
Applying Holder, Sobolev and Young inequalities, we obtain
\begin{equation}\label{2213}
\begin{aligned}
I\!I_1
&\le (\|\varrho\|_{L^3}\|{\rm div}u\|_{L^6}+\|u\|_{L^3}\|\nabla \varrho \|_{L^6})\|\nabla^2 \varrho\|_{L^2}\\
&\lesssim (\|\varrho\|_{H^1}+\|u\|_{H^1})(\|\nabla^2 \varrho\|_{L^2}^2+\|\nabla^2 u\|_{L^2}^2)\\
&\lesssim \delta (\|\nabla^2 \varrho\|_{L^2}^2+\|\nabla^2 u\|_{L^2}^2).
\end{aligned}
\end{equation}
Similarly, it is easy to deduce
\begin{equation}\label{2214}
\begin{aligned}
I\!I_2
&\le (\|u\|_{L^3}\|\nabla u\|_{L^6}+\|h(\varrho)\|_{L^\infty}\|\nabla^2 u \|_{L^2})\|\nabla^2 u\|_{L^2}\\
&\quad +(\|f(\varrho)\|_{L^3}\|\nabla \varrho\|_{L^6}
         +\|g(\varrho)\|_{L^\infty}\|B\|_{L^3}\|\nabla B \|_{L^6})\|\nabla^2 u\|_{L^2}\\
&\lesssim (\|\varrho\|_{H^2}+\|u\|_{H^1}+\|B\|_{H^1})
          (\|\nabla^2 \varrho\|_{L^2}^2+\|\nabla^2 u\|_{L^2}^2+\|\nabla^2 B\|_{L^2}^2)\\
&\lesssim \delta (\|\nabla^2 \varrho\|_{L^2}^2+\|\nabla^2 u\|_{L^2}^2+\|\nabla^2 B\|_{L^2}^2).
\end{aligned}
\end{equation}
Integrating by part and applying \eqref{2.6}, Holder and Sobolev inequalities, we obtain
\begin{equation}\label{2215}
\begin{aligned}
&-\int \nabla{\rm curl}[g(\varrho) (B\cdot \nabla B) ]\nabla B~dx\\
&=-\int \nabla[g(\varrho) (B\cdot \nabla B) ]\nabla {\rm curl} B~dx\\
&\lesssim (\|\nabla g(\varrho)\|_{L^6}\|B\|_{L^6}\|\nabla B\|_{L^6}
          +\|g(\varrho)\|_{L^\infty}\|\nabla B\|_{L^3}\|\nabla B\|_{L^6})
          \|\nabla {\rm curl} B\|_{L^2}\\
&\quad    +\|g(\varrho)\|_{L^\infty}\|B\|_{L^\infty}\|\nabla^2 B\|_{L^2}\|\nabla{\rm curl} B\|_{L^2}\\
&\lesssim (\|\nabla^2 \varrho\|_{L^2}\|\nabla B\|_{L^2}+\|\nabla B\|_{L^3}+\|B\|_{H^2})
           \|\nabla^2 B\|_{L^2}^2.
\end{aligned}
\end{equation}
Applying \eqref{2215}, Holder, Sobolev and Young inequalities, it arrives at directly
\begin{equation}\label{2216}
\begin{aligned}
I\!I_3
&\lesssim (\| u\|_{L^3}\|\nabla B\|_{L^6}+\|B\|_{L^3}\|\nabla u\|_{L^6})\|\nabla^2 B\|_{L^2}\\
&\quad  +(\|\nabla^2 \varrho\|_{L^2}\|\nabla B\|_{L^2}+\|\nabla B\|_{L^3}+\|B\|_{H^2})
           \|\nabla^2 B\|_{L^2}^2\\
&\lesssim \delta (\|\nabla^2 u\|_{L^2}^2+\|\nabla^2 B\|_{L^2}^2).
\end{aligned}
\end{equation}
Substituting \eqref{2213}, \eqref{2214} and \eqref{2216} into \eqref{2212}, then we obtain
$$
\frac{d}{dt}\int ( |\nabla \varrho|^2+\!|\nabla u|^2+|\nabla B|^2 ) dx
 +\int ( \mu |\nabla^2 u|^2+|\nabla^2 B|^2)dx
\lesssim \delta \|\nabla^2 \varrho\|_{L^2}^2,
$$
which, together with \eqref{2211}, completes the proof of the lemma.
\end{proof}
Next, we derive the second type of energy estimates involving the higher-order spatial derivatives of $\varrho$ and $u$.

\begin{lemm}\label{lemma2.3}
Under the condition \eqref{2.5}, then we have
\begin{equation}\label{231}
\frac{d}{dt}\|\nabla^2(\varrho, u, B)\|_{L^2}^2+C\|\nabla^3 (u, B)\|_{L^2}^2
\lesssim \delta \|\nabla^2 \varrho\|_{L^2}^2.
\end{equation}
\end{lemm}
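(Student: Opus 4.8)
The strategy mirrors Lemma~\ref{lemma2.2}, but now working with the second-order spatial derivatives. First I would apply $\nabla^2$ to each equation in \eqref{eq1}, multiply the resulting identities by $\nabla^2\varrho$, $\nabla^2 u$, $\nabla^2 B$ respectively, integrate over $\mathbb{R}^3$ and integrate by parts. Summing the three identities, the linear terms produce the dissipation $\mu\|\nabla^3 u\|_{L^2}^2+(\mu+\nu)\|\nabla^2{\rm div}\,u\|_{L^2}^2+\|\nabla^3 B\|_{L^2}^2$ on the left side together with $\frac12\frac{d}{dt}\|\nabla^2(\varrho,u,B)\|_{L^2}^2$, while the right side collects the nonlinear contributions $\int\nabla^2 S_1\cdot\nabla^2\varrho\,dx+\int\nabla^2 S_2\cdot\nabla^2 u\,dx+\int\nabla^2 S_3\cdot\nabla^2 B\,dx$. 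Note the term $\int\nabla^2\nabla\varrho\cdot\nabla^2 u\,dx$ that appears from cross terms must be handled: it integrates by parts against ${\rm div}\,u$ at one order lower and is absorbed, as in the standard compressible argument; it is not a genuine loss since $\|\nabla^3\varrho\|_{L^2}$ does not appear (and indeed the $H^2$-framework forbids it).

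**Estimating the nonlinear terms.** The bulk of the work is bounding the three integrals using \eqref{2.5}, \eqref{2.6}, the Gagliardo--Nirenberg inequality of Lemma~\ref{lemma2.1}, and Hölder's and Young's inequalities. For $\int\nabla^2 S_1\cdot\nabla^2\varrho\,dx$, after expanding $S_1=-\varrho\,{\rm div}\,u-u\cdot\nabla\varrho$ via the Leibniz rule, every term is a product of derivatives of $\varrho$ and $u$ of total order three; one distributes $L^\infty$, $L^3$, $L^6$ norms so each factor is controlled by $\|(\varrho,u)\|_{H^2}\le\delta$ times one or two copies of $\|\nabla^{\ge1}(\varrho,u)\|$ up to order three, yielding a bound $\lesssim\delta(\|\nabla^2\varrho\|_{L^2}^2+\|\nabla^2 u\|_{L^2}^2+\|\nabla^3 u\|_{L^2}^2)$. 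The term $\int\nabla^2 S_2\cdot\nabla^2 u\,dx$ is similar but also features $h(\varrho)\Delta u$, $f(\varrho)\nabla\varrho$ and $g(\varrho)(B\cdot\nabla B-\frac12\nabla|B|^2)$; the quasilinear piece $-\int\nabla^2[h(\varrho)(\mu\Delta u+(\mu+\nu)\nabla\,{\rm div}\,u)]\cdot\nabla^2 u\,dx$ should be integrated by parts once to move a derivative off the top-order $\nabla^3 u$, after which it is bounded by $\delta\|\nabla^3 u\|_{L^2}^2$ plus lower-order terms, exactly as the $k=0$ model computation \eqref{227}. For $\int\nabla^2 S_3\cdot\nabla^2 B\,dx$ the new feature is the Hall term $-{\rm curl}[g(\varrho)(B\cdot\nabla B-\frac12\nabla|B|^2)]$: one integrates by parts to put the outer ${\rm curl}$ onto $\nabla^2 B$, turning it into $\nabla^2{\rm curl}\,B$ paired against $\nabla^2[g(\varrho)(B\cdot\nabla B-\cdots)]$, then expands by Leibniz and estimates each of the resulting (at most) cubic terms in $B$ and $\varrho$ by $\delta$ times dissipative norms, in the spirit of \eqref{2215}. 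Collecting everything and using the smallness of $\delta$ to absorb the $\delta\|\nabla^3(u,B)\|_{L^2}^2$ contributions into the left-hand dissipation yields \eqref{231}.

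**Main obstacle.** The delicate point is the Hall term in $S_3$. It is formally of order two in $B$ (it contains $\nabla^2 B$ inside), so $\nabla^2 S_3$ contains fourth-order derivatives of $B$, which at first sight exceeds what the $H^2$ dissipation $\|\nabla^3 B\|_{L^2}^2$ can control. The resolution is that ${\rm curl}$ of a gradient vanishes, so the genuinely top-order piece is ${\rm curl}[g(\varrho)B\cdot\nabla B]$ rather than anything involving $\nabla(|B|^2)$ at top order, and crucially one integrates by parts in the energy identity so that at most three derivatives ever land on $B$: schematically $\int\nabla^2{\rm curl}[g(\varrho)(B\cdot\nabla B)]\cdot\nabla^2 B\,dx=\int\nabla^2[g(\varrho)(B\cdot\nabla B)]\cdot\nabla^2{\rm curl}\,B\,dx$, and the worst term $g(\varrho)B\cdot\nabla^3 B$ pairs with $\nabla^2{\rm curl}\,B\sim\nabla^3 B$ to give $\|g(\varrho)\|_{L^\infty}\|B\|_{L^\infty}\|\nabla^3 B\|_{L^2}^2\lesssim\delta\|\nabla^3 B\|_{L^2}^2$, absorbable by the left side. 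Checking that every other term produced by the Leibniz expansion of $\nabla^2[g(\varrho)(B\cdot\nabla B)]$ can likewise be distributed so that no factor needs more than $\|\nabla^3 B\|_{L^2}$ or $\|\nabla^2\varrho\|_{L^2}$, with the remaining factors soaked up by the $H^2$ smallness bound, is the real content of the proof; the bookkeeping is routine but must be done carefully for the mixed $\nabla\varrho$--$B$--$\nabla B$ cross terms, exactly as foreshadowed by the estimate \eqref{2215} at the $k=1$ level.
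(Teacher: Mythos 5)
Your plan is essentially the paper's proof: take $k=2$ in the energy identity \eqref{224}, integrate by parts so that at most three derivatives ever fall on $u$ or $B$ and at most two on $\varrho$, transfer the outer ${\rm curl}$ of the Hall term onto $\nabla^2 B$ exactly as in \eqref{2313}, and absorb the resulting $\delta\|\nabla^3(u,B)\|_{L^2}^2$ into the dissipation. Two bookkeeping points to tighten in the writeup: the Leibniz term $u\cdot\nabla^3\varrho$ in $\nabla^2 S_1$ must itself be integrated by parts into $\tfrac{1}{2}\int|\nabla^2\varrho|^2\,{\rm div}\,u\,dx$ (as in \eqref{234}) rather than estimated by H\"older, since $\|\nabla^3\varrho\|_{L^2}$ is unavailable in the $H^2$-framework; and the transfer of one derivative from $\nabla^2 S_2$ onto $\nabla^2 u$ (producing $\nabla^3 u$, as in \eqref{236}--\eqref{239}) is needed for \emph{all} pieces of $S_2$ --- not only the quasilinear viscosity term --- both to avoid $\nabla^3\varrho$ arising from $f(\varrho)\nabla\varrho$ and to make every quadratic remainder pair with the dissipative norms, so that no stray $\delta\|\nabla^2 u\|_{L^2}^2$ (which the right-hand side of \eqref{231} does not permit) survives.
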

\begin{proof}
Taking $k=2$ specially in \eqref{224}, we deduce immediately
\begin{equation}\label{232}
\begin{aligned}
&\!\frac{1}{2}\frac{d}{dt}\!\!\int \! (|\nabla^2 \varrho|^2\!+\!|\nabla^2 u|^2 \!+\!|\nabla^2 B|^2)dx
+\!\!\int \! (\mu |\nabla^3 u|^2\!+\!(\mu+\!\nu)|\nabla^2 {\rm div} u|^2\!
+\!|\nabla^3 B|^2)dx\\
&=\int \nabla^2 S_1 \cdot \nabla^2 \varrho ~ dx
 +\int \nabla^2 S_2 \cdot \nabla^2 u ~ dx
 +\int \nabla^2 S_3 \cdot \nabla^{2} B ~ dx.
\end{aligned}
\end{equation}
Applying Holder, Sobolev and Young inequalities, it is easy to obtain
\begin{equation}\label{233}
\begin{aligned}
&-\int \nabla^2 (\varrho {\rm div} u) \nabla^2 \varrho dx\\
&=-\int(\nabla^2 \varrho {\rm div}u+2\nabla \varrho \nabla {\rm div}u+\varrho \nabla^2{\rm div}u)\nabla^2 \varrho dx\\
&\lesssim (\|\nabla u\|_{L^\infty}\|\nabla^2 \varrho\|_{L^2}+\|\nabla \varrho\|_{L^3}\|\nabla^2 u\|_{L^6}
      +\|\varrho\|_{L^\infty}\|\nabla^3 u\|_{L^2})\|\nabla^2 \varrho\|_{L^2}\\
&\lesssim (\|\nabla^2 u\|_{L^2}^{\frac{1}{2}}\|\nabla^3 u\|_{L^2}^{\frac{1}{2}}\|\nabla^2 \varrho\|_{L^2}
           +\|\nabla \varrho\|_{H^1}\|\nabla^3 u\|_{L^2}
           +\|\varrho\|_{H^2}\|\nabla^3 u\|_{L^2})\|\nabla^2 \varrho\|_{L^2}\\
&\lesssim (\|\nabla^2 u\|_{L^2}^{\frac{1}{2}}\|\nabla^2 \varrho\|_{L^2}^{\frac{1}{2}}
           +\|\nabla \varrho\|_{H^1}+\|\varrho\|_{H^2})(\|\nabla^2 \varrho\|_{L^2}^2+\|\nabla^3 u\|_{L^2}^2)\\
&\lesssim \delta(\|\nabla^2 \varrho\|_{L^2}^2+\|\nabla^3 u\|_{L^2}^2).
\end{aligned}
\end{equation}
Integrating by part and applying Holder, Sobolev and Young inequalities, it arrives at
\begin{equation}\label{234}
\begin{aligned}
&-\int\! \nabla^2 (u \cdot \nabla \varrho)\! \nabla^2 \varrho \ dx\\
&=\int \left[-(\nabla^2u \nabla \varrho+2\nabla u \nabla^2 \varrho)\nabla^2 \varrho
+\frac{1}{2}|\nabla^2 \varrho|^2{\rm div}u\right] dx\\
&\lesssim (\|\nabla^2 u\|_{L^6}\|\nabla \varrho\|_{L^3}+\|\nabla u\|_{L^\infty}\|\nabla^2 \varrho\|_{L^2})
           \|\nabla^2 \varrho\|_{L^2}\\
&\lesssim \|\nabla \varrho\|_{H^1}\|\nabla^2 \varrho\|_{L^2}\|\nabla^3 u\|_{L^2}
          +\|\nabla^2 u\|_{L^2}^{\frac{1}{2}}\|\nabla^3 u\|_{L^2}^{\frac{1}{2}}\|\nabla^2 \varrho\|_{L^2}^2\\
&\lesssim (\|\nabla^2 u\|_{L^2}^{\frac{1}{2}}\|\nabla^2 \varrho\|_{L^2}^{\frac{1}{2}}
           +\|\nabla \varrho\|_{H^1})(\|\nabla^2 \varrho\|_{L^2}^2+\|\nabla^3 u\|_{L^2}^2)\\
&\lesssim \delta(\|\nabla^2 \varrho\|_{L^2}^2+\|\nabla^3 u\|_{L^2}^2).
\end{aligned}
\end{equation}
The combination of \eqref{233} and \eqref{234} gives rise to
\begin{equation}\label{235}
\int \nabla^2 S_1 \cdot \nabla^2 \varrho \ dx \lesssim \delta(\|\nabla^2 \varrho\|_{L^2}^2+\|\nabla^3 u\|_{L^2}^2).
\end{equation}
Now, we give the estimate for the second term on the right hand side of \eqref{232}.
By virtue of Holder and Sobolev inequalities, we have
\begin{equation}\label{236}
\begin{aligned}
&\int \nabla (u\cdot \nabla u)\nabla \Delta udx\\
&=\int (\nabla u \nabla u+u\nabla^2 u)\nabla \Delta udx\\
&\le \|\nabla u\|_{L^3}\|\nabla u\|_{L^6}\|\nabla^3 u\|_{L^2}
+\|u\|_{L^3}\|\nabla^2 u\|_{L^6}\|\nabla^3 u\|_{L^2}\\
&\lesssim \|u\|_{L^2}^{\frac{1}{2}}\|\nabla^3 u\|_{L^2}^{\frac{1}{2}}
          \|\nabla u\|_{L^2}^{\frac{1}{2}}\|\nabla^3 u\|_{L^2}^{\frac{1}{2}}\|\nabla^3 u\|_{L^2}
          +\|u\|_{H^1}\|\nabla^3 u\|_{L^2}^2\\
&\lesssim \delta \|\nabla^3 u\|_{L^2}^2.\\
\end{aligned}
\end{equation}
In view of \eqref{2.6}, Holder and Sobolev inequalities, we have
\begin{equation}\label{237}
\begin{aligned}
&\int \nabla(h(\varrho)(\mu \Delta u+(\mu+\nu)\nabla {\rm div}u))\nabla \Delta u dx\\
&\lesssim (\|\nabla h(\varrho)\|_{L^3}\|\nabla^2 u\|_{L^6}+\|h(\varrho)\|_{L^\infty}\|\nabla^3 u\|_{L^2})\|\nabla^3 u\|_{L^2}\\
&\lesssim (\|\nabla \varrho\|_{H^1}\|\nabla^3 u\|_{L^2}+\|\varrho\|_{H^2}\|\nabla^3 u\|_{L^2})\|\nabla^3 u\|_{L^2}\\
&\lesssim \delta \|\nabla^3 u\|_{L^2}^2,
\end{aligned}
\end{equation}
and
\begin{equation}\label{238}
\begin{aligned}
&\int \nabla(f(\varrho)\nabla \varrho)\nabla \Delta u dx\\
&\lesssim (\|\nabla \varrho\|_{L^4}^2
+\|f(\varrho)\|_{L^\infty}\|\nabla^2 \varrho\|_{L^2})\|\nabla^3 u\|_{L^2}\\
&\lesssim (\|\nabla^{\frac{3}{2}} \varrho\|_{L^2}\|\nabla^2 \varrho\|_{L^2}
           +\|\varrho\|_{H^2}\|\nabla^2 \varrho\|_{L^2})\|\nabla^3 u\|_{L^2}\\
&\lesssim (\|\nabla^{\frac{3}{2}} \varrho\|_{L^2}+\|\varrho\|_{H^2})
            \|\nabla^2\varrho\|_{L^2}\|\nabla^3 u\|_{L^2}\\
&\lesssim \delta (\|\nabla^2 \varrho\|_{L^2}^2+\|\nabla^3 u\|_{L^2}^2).
\end{aligned}
\end{equation}
Similarly, it is easy to deduce
\begin{equation}\label{239}
\begin{aligned}
&\int \nabla\left[g(\varrho)(B \cdot \nabla B-\nabla(\frac{1}{2}|B|^2))\right]\nabla \Delta u dx\\
&\lesssim (\|\nabla g(\varrho)\|_{L^6}\|B\|_{L^6}\|\nabla B\|_{L^6}
           +\|g(\varrho)\|_{L^\infty}\|\nabla B\|_{L^3}\|\nabla B\|_{L^6})\|\nabla^3 u\|_{L^2}\\
&\quad  +\|g(\varrho)\|_{L^\infty}\|B\|_{L^3}\|\nabla^2 B\|_{L^6}\|\nabla^3 u\|_{L^2}\\
&\lesssim (\|\nabla \varrho\|_{L^6}\|\nabla B\|_{L^2}\|\nabla^2 B\|_{L^2}
          +\|B\|_{L^2}^{\frac{1}{2}}\|\nabla^3 B\|_{L^2}^{\frac{1}{2}}
           \|\nabla B\|_{L^2}^{\frac{1}{2}}\|\nabla^3 B\|_{L^2}^{\frac{1}{2}})\|\nabla^3 u\|_{L^2}\\
&\quad     +\|B\|_{H^1}\|\nabla^3 B\|_{L^2}\|\nabla^3 u\|_{L^2}\\
&\lesssim \delta (\|\nabla^2 \varrho\|_{L^2}^2+\|\nabla^3 u\|_{L^2}^2+\|\nabla^3 B\|_{L^2}^2).
\end{aligned}
\end{equation}
By virtue of the estimates \eqref{236}-\eqref{239}, we obtain immediately
\begin{equation}\label{2310}
\int \nabla^2 S_2 \cdot \nabla^2 u \ dx
\lesssim \delta (\|\nabla^2 \varrho\|_{L^2}^2+\|\nabla^3 u\|_{L^2}^2+\|\nabla^3 B\|_{L^2}^2).
\end{equation}
Integrating by part and applying Holder and Sobolev inequalities, it arrives at
\begin{equation}\label{2311}
\begin{aligned}
&\int \nabla^2(u\cdot \nabla B)\nabla^2 B~dx\\
&\lesssim (\|\nabla u\|_{L^3}\|\nabla B\|_{L^6}
           +\|u\|_{L^3}\|\nabla^2 B\|_{L^6})\|\nabla^3 B\|_{L^2}\\
&\lesssim (\|u\|_{L^2}^{\frac{1}{2}}\|\nabla^3 u\|_{L^2}^{\frac{1}{2}}
           \|\nabla B\|_{L^2}^{\frac{1}{2}}\|\nabla^3 B\|_{L^2}^{\frac{1}{2}}
           +\|u\|_{H^1}\|\nabla^3 B\|_{L^2})\|\nabla^3 B\|_{L^2}\\
&\lesssim \delta(\|\nabla^3 u\|_{L^2}^2+\|\nabla^3 B\|_{L^2}^2).
\end{aligned}
\end{equation}
Similarly, it is easy to obtain
\begin{equation}\label{2312}
\begin{aligned}
&\int \nabla^2(B\cdot \nabla u+B{\rm div}u)\nabla^2 B~dx\\
&\lesssim (\|\nabla u\|_{L^3}\|\nabla B\|_{L^6}
           +\|B\|_{L^3}\|\nabla^2 u\|_{L^6})\|\nabla^3 B\|_{L^2}\\
&\lesssim (\|u\|_{L^2}^{\frac{1}{2}}\|\nabla^3 u\|_{L^2}^{\frac{1}{2}}
           \|\nabla B\|_{L^2}^{\frac{1}{2}}\|\nabla^3 B\|_{L^2}^{\frac{1}{2}}
           +\|B\|_{H^1}\|\nabla^3 u\|_{L^2})\|\nabla^3 B\|_{L^2}\\
&\lesssim \delta(\|\nabla^3 u\|_{L^2}^2+\|\nabla^3 B\|_{L^2}^2).
\end{aligned}
\end{equation}
Integrating by part and applying \eqref{2.6}, Holder and Sobolev inequalities, we obtain
\begin{equation}\label{2313}
\begin{aligned}
&\quad -\int \nabla^2{\rm curl}\left[g(\varrho)\left(B\cdot \nabla B
                 -\nabla(\frac{1}{2}|B|^2)\right)\right]\nabla^2 B~dx\\
&=-\int\nabla^2\left[g(\varrho)\left(B\cdot \nabla B-\nabla(\frac{1}{2}|B|^2)\right)\right]
                 \nabla^2{\rm curl}B~dx\\
&\lesssim (\|\nabla^2 g(\varrho)\|_{L^2}\|B\|_{L^\infty}\|\nabla B\|_{L^\infty}
          +\|\nabla g(\varrho)\|_{L^6}\|\nabla B\|_{L^6}\|\nabla B\|_{L^6})
          \|\nabla^2 {\rm curl} B\|_{L^2}\\
&\quad   +(\|\nabla g(\varrho)\|_{L^6}\|B\|_{L^6}\|\nabla^2 B\|_{L^6}
          +\|g(\varrho)\|_{L^\infty}\|\nabla B\|_{L^3}\|\nabla^2 B\|_{L^6})
          \|\nabla^2 {\rm curl} B\|_{L^2}\\
&\quad   +\|g(\varrho)\|_{L^\infty}\|B\|_{L^\infty}\|\nabla^3 B\|_{L^2}\|\nabla^2 {\rm curl} B\|_{L^2}\\
&\lesssim (\||\nabla \varrho|^2+|\nabla^2 \varrho|\|_{L^2}\|B\|_{L^\infty}\|\nabla B\|_{L^\infty}
          +\|\nabla \varrho\|_{L^6}\|\nabla B\|_{L^6}\|\nabla B\|_{L^6})
          \|\nabla^3 B\|_{L^2}\\
&\quad   +(\|\nabla \varrho\|_{L^6}\|B\|_{L^6}\|\nabla^2 B\|_{L^6}
          +\|\nabla B\|_{L^3}\|\nabla^2 B\|_{L^6}+\|B\|_{L^\infty}\|\nabla^3 B\|_{L^2})
          \|\nabla^3 B\|_{L^2}\\
&\lesssim (\|\nabla \varrho\|_{L^2}^\frac{1}{2}\|\nabla^2 \varrho\|_{L^2}^\frac{3}{2}
          +\|\nabla^2 \varrho\|_{L^2})
          \|B\|_{H^2}\|\nabla^2 B\|_{L^2}^\frac{1}{2}\|\nabla^3 B\|_{L^2}^\frac{3}{2}\\
&\quad   +\|\nabla B\|_{H^1}^2 \|\nabla^2 \varrho\|_{L^2} \|\nabla^3 B\|_{L^2}
         +\|\nabla B\|_{H^1}\|\nabla^3 B\|_{L^2}^2\\
&\lesssim \delta(\|\nabla^2 \varrho\|_{L^2}^2+\|\nabla^3 B\|_{L^2}^2).
\end{aligned}
\end{equation}
In view of the estimates \eqref{2311}-\eqref{2313}, we obtain directly
\begin{equation}\label{2314}
\int \nabla^2 S_3 \cdot \nabla^2 B ~ dx
\lesssim \delta(\|\nabla^2 \varrho\|_{L^2}^2+\|\nabla^3 u\|_{L^2}^2+\|\nabla^3 B\|_{L^2}^2).
\end{equation}
Substituting \eqref{235}, \eqref{2310} and \eqref{2314} into \eqref{232}, then we have
$$
\frac{d}{dt}\int ( |\nabla^2 \varrho|^2+\!|\nabla^2 u|^2+|\nabla^2 B|^2 ) dx
 +\int ( \mu |\nabla^3 u|^2+|\nabla^3 B|^2)dx
\lesssim \delta \|\nabla^2 \varrho\|_{L^2}^2,
$$
which completes the proof of the lemma.
\end{proof}

Finally, we will use the equations \eqref{eq1} to recover the dissipation estimate for $\varrho$.

\begin{lemm}\label{lemma2.4}
Under the condition \eqref{2.5}, then for $k=0, 1$, we have
\begin{equation}\label{241}
\frac{d}{dt}\int \nabla^k u \cdot \nabla^{k+1} \varrho dx+C\|\nabla^{k+1} \varrho\|_{L^2}^2
\lesssim  \|\nabla^{k+1} u\|_{L^2}^2+\|\nabla^{k+2}u\|_{L^2}^2+\|\nabla^{k+2} B\|_{L^2}^2.
\end{equation}
\end{lemm}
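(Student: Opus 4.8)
The plan is to exploit the momentum equation \eqref{eq1}$_2$ to recover the missing dissipation of the density $\varrho$, by the classical duality-type argument. Applying $\nabla^k$ ($k=0,1$) to \eqref{eq1}$_2$, taking the $L^2$-inner product with $\nabla^{k+1}\varrho$ and integrating over $\mathbb{R}^3$, the pressure term produces
$$\int \nabla^k\nabla\varrho\cdot\nabla^{k+1}\varrho\,dx=\|\nabla^{k+1}\varrho\|_{L^2}^2,$$
which is exactly the dissipative quantity we want on the left-hand side. The time-derivative term is rewritten as
$$\int\nabla^k u_t\cdot\nabla^{k+1}\varrho\,dx=\frac{d}{dt}\int\nabla^k u\cdot\nabla^{k+1}\varrho\,dx-\int\nabla^k u\cdot\nabla^{k+1}\varrho_t\,dx,$$
and one substitutes $\varrho_t=-{\rm div}\,u+S_1$ from \eqref{eq1}$_1$; after integrating by parts, the linear piece $\int\nabla^k u\cdot\nabla^{k+1}{\rm div}\,u\,dx$ is bounded by $\|\nabla^{k+1}u\|_{L^2}^2$.

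Next I would handle the viscous terms. Moving one derivative by integration by parts,
$$-\int\big(\mu\Delta\nabla^k u+(\mu+\nu)\nabla^k\nabla{\rm div}\,u\big)\cdot\nabla^{k+1}\varrho\,dx\lesssim\|\nabla^{k+2}u\|_{L^2}\|\nabla^{k+1}\varrho\|_{L^2},$$
and Young's inequality with a small parameter gives a contribution of the form $\tfrac{1}{4}\|\nabla^{k+1}\varrho\|_{L^2}^2+C\|\nabla^{k+2}u\|_{L^2}^2$; the $\varrho$-part is absorbed into the left-hand side while the $u$-part is exactly one of the terms allowed on the right. It then remains to estimate the nonlinear contributions $\int\nabla^k S_2\cdot\nabla^{k+1}\varrho\,dx$ and $\int\nabla^k u\cdot\nabla^{k+1}S_1\,dx$ (the latter coming from the $S_1$-piece above). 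These are bounded, term by term, by $\delta$ times a sum of $\|\nabla^{k+1}\varrho\|_{L^2}^2$, $\|\nabla^{k+1}u\|_{L^2}^2$, $\|\nabla^{k+2}u\|_{L^2}^2$ and $\|\nabla^{k+2}B\|_{L^2}^2$, using Hölder, the Gagliardo--Nirenberg inequality of Lemma \ref{lemma2.1}, the uniform bounds \eqref{2.6} and the smallness \eqref{2.5}. Choosing $\delta$ small enough, all the $\|\nabla^{k+1}\varrho\|_{L^2}^2$-terms so produced are absorbed into the left-hand side, which yields \eqref{241}.

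The main technical point will be the $S_1$-term $\int\nabla^k u\cdot\nabla^{k+1}S_1\,dx$ together with the "bad" pieces of $\int\nabla^k S_2\cdot\nabla^{k+1}\varrho\,dx$. Since $S_1=-\varrho\,{\rm div}\,u-u\cdot\nabla\varrho$, a naive expansion of $\nabla^{k+1}S_1$ would place $k+2$ derivatives on $\varrho$, which cannot be controlled; the remedy is to integrate by parts first, moving one derivative from $S_1$ onto $\nabla^k u$, so that at most $\nabla^{k+1}\varrho$ appears, paired against $\nabla^{k+1}u$ with coefficients like $\|u\|_{L^\infty}$, $\|\nabla u\|_{L^3}$ or $\|\varrho\|_{L^\infty}$ carrying the smallness. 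Likewise, in $\int\nabla^k S_2\cdot\nabla^{k+1}\varrho\,dx$ the term $\nabla^k(f(\varrho)\nabla\varrho)\cdot\nabla^{k+1}\varrho$ generates $f(\varrho)|\nabla^{k+1}\varrho|^2$, which carries no gain in derivative count; here one uses crucially that $|f(\varrho)|\le C|\varrho|\le C\delta$ from \eqref{2.6}--\eqref{2.5}, so that this term is $\lesssim\delta\|\nabla^{k+1}\varrho\|_{L^2}^2$ and can again be absorbed. All remaining terms contain at least one low-order factor that is small in $H^2$, hence are routinely dominated by $\delta$ times admissible quantities, and the lemma follows.
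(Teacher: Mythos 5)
Your proposal is correct and follows essentially the same route as the paper: applying $\nabla^k$ to the momentum equation, pairing with $\nabla^{k+1}\varrho$, converting the time derivative via the mass equation (noting $S_1=-{\rm div}(\varrho u)$), integrating by parts so that at most $k+1$ derivatives fall on $\varrho$, and absorbing the $\delta$-small nonlinear contributions. The one technical point you flag — avoiding $k+2$ derivatives on $\varrho$ in the $S_1$-term by first moving a derivative onto $\nabla^k u$ — is exactly how the paper handles it in \eqref{243}.
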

\begin{proof}
Taking $k$-th spatial derivatives to the second equation of \eqref{eq1}, multiplying by $\nabla^{k+1}\varrho$ and integrating over $\mathbb{R}^3$, then we obtain
\begin{equation}\label{242}
\begin{aligned}
&\int \nabla^k u_t \cdot \nabla^{k+1} \varrho dx+\int |\nabla^{k+1} \varrho|^2 dx\\
&=\int \nabla^k [\mu \Delta u+(\mu+\nu)\nabla {\rm div}u]\nabla^{k+1} \varrho dx
+ \int \nabla^k S_2 \cdot \nabla^{k+1} \varrho dx.
\end{aligned}
\end{equation}
In order to deal with $\int \nabla^k u_t \cdot \nabla^{k+1} \varrho dx$,
we turn the time derivatives of velocity to the density. Then, applying the mass
equation \eqref{eq1}$_1$, we can transform time derivatives to the spatial derivatives, i.e.
\begin{equation}\label{243}
\begin{aligned}
&\int \nabla^k u_t \cdot \nabla^{k+1} \varrho dx\\
&=\frac{d}{dt} \int \nabla^k u \cdot \nabla^{k+1} \varrho dx-\int \nabla^k u \cdot \nabla^{k+1} \varrho_t dx\\
&=\frac{d}{dt} \int \nabla^k u \cdot \nabla^{k+1} \varrho dx+\int \nabla^k u\cdot
     \nabla^{k+1} ({\rm div}u+{\rm div}(\varrho u))dx\\
&=\frac{d}{dt} \int \nabla^k u \cdot \nabla^{k+1} \varrho dx-\int \nabla^k {\rm div}u\cdot
     \nabla^{k} ({\rm div}u+{\rm div}(\varrho u))dx\\
&=\frac{d}{dt} \int \nabla^k u \cdot \nabla^{k+1} \varrho dx-\int |\nabla^k {\rm div}u|^2 dx
   -\int \nabla^k {\rm div}u\cdot\nabla^{k} {\rm div}(\varrho u)dx.
\end{aligned}
\end{equation}
Substituting \eqref{243} into \eqref{242}, it is easy to deduce
\begin{equation}\label{244}
\begin{aligned}
&\frac{d}{dt} \int \nabla^k u \cdot \nabla^{k+1} \varrho dx+\int |\nabla^{k+1} \varrho|^2 dx\\
&=\int\! |\nabla^k {\rm div}u|^2 dx
   +\!\int \!\nabla^k {\rm div}u\cdot\nabla^{k} {\rm div}(\varrho u)dx
   +\int \nabla^k S_2 \cdot \nabla^{k+1} \varrho dx\\
& \quad +\!\int \!\nabla^k [\mu \Delta u+(\mu+\nu)\nabla {\rm div}u]\nabla^{k+1} \varrho dx.
\end{aligned}
\end{equation}
For the case $k=0,$ then applying Holder, Sobolev and Young inequalities, we obtain
\begin{equation}\label{245}
\begin{aligned}
\int \! {\rm div}u\cdot {\rm div}(\varrho u)dx
&\lesssim \|\varrho\|_{L^\infty} \|\nabla u\|_{L^2}^2
          +\|u\|_{L^3}\|{\rm div}u\|_{L^6}\|\nabla \varrho\|_{L^2}\\
&\lesssim (\|\varrho\|_{H^2}+\|u\|_{H^1})(\|\nabla \varrho\|_{L^2}^2+\|\nabla^2 u\|_{L^2}^2)\\
&\lesssim \delta (\|\nabla \varrho\|_{L^2}^2+\|\nabla^2 u\|_{L^2}^2).
\end{aligned}
\end{equation}
By virtue of \eqref{2.6} and Holder inequality, it is easy to deduce
\begin{equation}\label{246}
\begin{aligned}
\int  S_2 \cdot \nabla \varrho dx
&\lesssim (\|u\|_{L^3}\|\nabla u\|_{L^6}+\|\varrho\|_{L^\infty}\|\nabla^2 u \|_{L^2})\|\nabla \varrho\|_{L^2}\\
& \quad    +(\|\varrho\|_{L^\infty}\|\nabla \varrho\|_{L^2}
           +\|g(\varrho)\|_{L^\infty}\|B\|_{L^3}\|\nabla B \|_{L^6})\|\nabla \varrho\|_{L^2}\\
&\lesssim \delta (\|\nabla \varrho\|_{L^2}^2+\|\nabla^2 u\|_{L^2}^2+\|\nabla^2 B\|_{L^2}^2),
\end{aligned}
\end{equation}
and
\begin{equation}\label{247}
\begin{aligned}
\int  [\mu \Delta u+(\mu+\nu)\nabla {\rm div}u]\nabla \varrho dx
\lesssim \|\nabla^2 u\|_{L^2}^2  + \varepsilon\|\nabla \varrho\|_{L^2}^2.
\end{aligned}
\end{equation}
The combination of \eqref{245}, \eqref{246} and \eqref{247} helps us complete the proof to \eqref{241} for the case of $k=0$. As for the case $k=1,$ applying Holder, Sobolev and Young inequalities, we deduce
\begin{equation}\label{248}
\begin{aligned}
&\int \nabla {\rm div}u\cdot\nabla {\rm div}(\varrho u)dx\\
&\lesssim (\|\nabla \varrho\|_{L^3}\|{\rm div} u\|_{L^6}
           +\|\varrho\|_{L^\infty}\|\nabla {\rm div}u\|_{L^2})\|\nabla^2 u\|_{L^2}\\
&\qquad  +(\|\nabla \varrho\|_{L^3}\|\nabla u\|_{L^6}+\|u\|_{L^\infty}\|\nabla^2 \varrho\|_{L^2})\|\nabla^2 u\|_{L^2}\\
&\lesssim \delta (\|\nabla^2 \varrho\|_{L^2}^2+\|\nabla^2 u\|_{L^2}^2).
\end{aligned}
\end{equation}
With the help of Holder inequality and Lemma \ref{lemma2.3}, it arrives at
\begin{equation}\label{249}
\int \nabla S_2 \cdot \nabla^2 \varrho dx
\lesssim \delta (\|\nabla^2 \varrho\|_{L^2}^2+\|\nabla^3 u\|_{L^2}^2+\|\nabla^3 B\|_{L^2}^2),
\end{equation}
and
\begin{equation}\label{2410}
\begin{aligned}
\int  \nabla[\mu \Delta u+(\mu+\nu)\nabla {\rm div}u]\nabla^2 \varrho dx
\lesssim \|\nabla^3 u\|_{L^2}^2+\varepsilon\|\nabla^2 \varrho\|_{L^2}^2.
\end{aligned}
\end{equation}
The combination of \eqref{248}, \eqref{249} and \eqref{2410} gives rise to
the proof of \eqref{241} for the case of $k=1$.
\end{proof}

\subsection{Global existence of strong solutions}
\quad In this subsection, we shall combine the energy estimates that we have derived
in the previous section to prove the global existence of strong solutions.
Summing up \eqref{221} from $k=l \ (l=0,1)$ to $k=1$, then we obtain
$$
\frac{d}{dt}\|\nabla^l(\varrho, u, B)\|_{H^{1-l}}^2
+C\|\nabla^l(\nabla u, \nabla B)\|_{H^{1-l}}^2
\lesssim \delta \|\nabla^{l+1} \varrho\|_{H^{1-l}}^2,
$$
which, together with \eqref{231}, then it arrives at
\begin{equation}\label{251}
\frac{d}{dt}\|\nabla^l(\varrho, u, B)\|_{H^{2-l}}^2+C\|\nabla^{l+1}(u, B)\|_{H^{2-l}}^2
\le \delta C_1\|\nabla^{l+1} \varrho\|_{H^{1-l}}^2.
\end{equation}
On the other hand, summing \eqref{241} from $k=l \ (l=0,1)$ to $k=1$, we obtain immediately
\begin{equation}\label{252}
\frac{d}{dt}\sum_{l\le k \le 1}\int \nabla^k u \cdot \nabla^{k+1} \varrho dx
+C_2\|\nabla^{l+1} \varrho\|_{H^{1-l}}^2
\le C\left(\|\nabla^{l+1} u\|_{H^{2-l}}^2+\|\nabla^{l+2} B\|_{H^{1-l}}^2\right).
\end{equation}
Multiplying \eqref{252} by $2\delta C_1/C_2$ and adding the resulting inequality
to \eqref{251}, then it arrives at
\begin{equation}\label{253}
\frac{d}{dt}\mathcal{E}_l^2(t)+
C_3 \left(\|\nabla^{l+1} \varrho\|_{H^{1-l}}^2+\|\nabla^{l+1}(u, B)\|_{H^{2-l}}^2\right)\le 0.
\end{equation}
where  $\mathcal{E}_l^2(t)$ is defined as
$$
\mathcal{E}_l^2(t)=
\|\nabla^l(\varrho, u, B)\|_{H^{2-l}}^2
+\frac{2\delta C_1}{C_2}\sum_{l\le k \le 1}\int \nabla^k u \cdot \nabla^{k+1} \varrho dx.
$$
By virtue of the smallness of $\delta$, it is easy to obtain
\begin{equation}\label{254}
C_4^{-1}\|\nabla^l(\varrho, u, B)\|_{H^{2-l}}^2
\le
\mathcal{E}_l^2(t)
\le C_4\|\nabla^l(\varrho, u, B)\|_{H^{2-l}}^2.
\end{equation}
Choosing $l=0$ in \eqref{253}, integrating over $[0, t]$ and applying the
equivalent relation \eqref{254}, we obtain
$$
\|(\varrho, u, B)(t)\|_{H^2}^2 \le C \|(\varrho_0, u_0, B_0)\|_{H^2}^2.
$$
Then, by the standard continuity argument (see Theorem 7.1 on page 100 in
\cite{Matsumura-Nishida}), we close the estimate \eqref{2.5}.
Thus, we extend the local strong solutions to be global one
and the uniqueness of global strong solutions is guaranteed by the uniqueness
of local solutions that has been prove by Fan et al. \cite{Fan-Zhou}.
Therefore,
choosing $l=0$ in \eqref{253}, integrating over $[0, t]$ and applying the equivalent
relation \eqref{254}, we obtain
\begin{equation}\label{255}
\|(\varrho, u, B)(t)\|_{H^2}^2+
 \int_0^t (\|\nabla \varrho (\tau)\|_{H^{1}}^2+\|(\nabla u, \nabla B)(\tau)\|_{H^2}^2) d\tau
\le C \|(\varrho_0, u_0, B_0)\|_{H^2}^2,
\end{equation}
which completes the proof of  the global existence of strong solutions.

\subsection{Decay rates of strong solution}

\quad In this section, we will establish optimal decay rates for the compressible
Hall-MHD equations \eqref{1.1}-\eqref{1.3}.
If the initial perturbation belongs to $L^1$ additionally, we apply the method
of Green function to establish optimal time decay rates for the global strong solutions.
Furthermore, the application of Fourier splitting method by Schonbek \cite{Schonbek}
helps us to build optimal time decay rates for the second order spatial derivatives
of magnetic field.

First of all, let us to consider the following linearized systems
\begin{equation}\label{leq1}
\left\{
\begin{aligned}
& \varrho_t+{\rm div}u=0,\\
& u_t-\mu \Delta u-(\mu+\nu)\nabla {\rm div}u+\nabla \varrho=0,\\
& B_t-\Delta B=0,
\end{aligned}
\right.
\end{equation}
with the initial data
\begin{equation}\label{leq2}
\left.(\varrho, u, B)(x,t)\right|_{t=0}=(\varrho_0, u_0, B_0)(x)
\rightarrow(0,0,0) \quad {\text {as} } \quad |x|\rightarrow \infty.
\end{equation}
Obviously, the solution $(\varrho, u, B)$ of the linearized problem \eqref{leq1}-\eqref{leq2}
can be expressed as
\begin{equation}\label{leqs}
(\varrho, u, B)^{tr}=G(t)*(\varrho_0, u_0, B_0)^{tr}, t \ge 0.
\end{equation}
Here $G(t):=G(x,t)$ is the Green matrix for the systems \eqref{leq1}
and the exact expression of the Fourier transform $\hat{G}(\xi, t)$ of Green function $G(x,t)$ as
$$
\hat{G}(\xi, t)=
\left[
\begin{array}{ccc}
\frac{\lambda_+ e^{\lambda_- t}-\lambda_- e^{\lambda_+ t}}{\lambda_+ -\lambda_-}
& \frac{-i \xi^t (e^{\lambda_+ t}-e^{\lambda_- t})}{\lambda_+ -\lambda_-}
& 0\\
 \frac{-i \xi (e^{\lambda_+ t}-e^{\lambda_- t})}{\lambda_+ -\lambda_-}
& \frac{\lambda_+ e^{\lambda_+ t}-\lambda_- e^{\lambda_- t}}{\lambda_+ -\lambda_-} \frac{\xi \xi^t}{|\xi|^2}
  +e^{\lambda_0 t}\left(I_{3\times 3}- \frac{\xi \xi^t}{|\xi|^2}\right)
&0 \\
0
&0
&e^{\lambda_1 t}I_{3\times 3}\\
\end{array}
\right]
$$
where
$$
\begin{aligned}
&\lambda_0 =-\mu |\xi|^2,\quad \lambda_1 =-|\xi|^2,\\
&\lambda_+ =-\left(\mu+\frac{1}{2}\nu\right)|\xi|^2+ i \sqrt{|\xi|^2-\left(\mu+\frac{1}{2}\nu\right)^2|\xi|^4},\\
&\lambda_- =-\left(\mu+\frac{1}{2}\nu\right)|\xi|^2- i \sqrt{|\xi|^2-\left(\mu+\frac{1}{2}\nu\right)^2|\xi|^4}.\\
\end{aligned}
$$
Since the systems \eqref{261} is an independent coupling of the classical linearized Navier-Stokes equations and heat equation, the representation of Green function $\hat{G}(\xi, t)$ is easy to verify. Furthermore, we have the following decay rates for the systems \eqref{leq1}-\eqref{leq2},
refer to \cite{Duan-Yang}.
\begin{prop}\label{linearized-Decay}
Assume that $(\varrho, u, B)$ is the solution of the linearized systems
\eqref{leq1}-\eqref{leq2} with the initial data
$(\varrho_0, u_0, B_0)\in L^1 \cap H^2 $, then
$$
\begin{aligned}
&\|\nabla^k \varrho\|_{L^2}^2\le C\left(\|(\varrho_0, u_0)\|_{L^1}^2+\|\nabla^k(\varrho_0, u_0)\|_{L^2}^2\right)
 (1+t)^{-\frac{3}{2}-k},\\
&\|\nabla^k u\|_{L^2}^2\le C\left(\|(\varrho_0, u_0)\|_{L^1}^2+\|\nabla^k(\varrho_0, u_0)\|_{L^2}^2\right)
 (1+t)^{-\frac{3}{2}-k},\\
&\|\nabla^k B\|_{L^2}^2\le C\left(\|B_0\|_{L^1}^2+\|\nabla^k B_0\|_{L^2}^2\right)
 (1+t)^{-\frac{3}{2}-k}
\end{aligned}
$$
for $0\le k \le 2$.
\end{prop}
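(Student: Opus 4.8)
The plan is to exploit the fact that the linearized system \eqref{leq1} decouples: the pair $(\varrho,u)$ solves the linearized compressible Navier--Stokes system while $B$ solves the heat equation $B_t-\Delta B=0$. The two estimates can therefore be handled independently, and in both cases the strategy is identical: pass to the Fourier side, apply Plancherel's identity, and split frequency space into a low--frequency region $\{|\xi|\le\eta\}$ and a high--frequency region $\{|\xi|\ge\eta\}$ for a suitably small $\eta>0$.

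First I would dispose of $B$. Since $\hat B(\xi,t)=e^{-|\xi|^2t}\hat B_0(\xi)$, Plancherel gives
\begin{equation*}
\|\nabla^k B(t)\|_{L^2}^2=\int_{\mathbb{R}^3}|\xi|^{2k}e^{-2|\xi|^2t}|\hat B_0(\xi)|^2\,d\xi .
\end{equation*}
On $\{|\xi|\le\eta\}$ I would bound $|\hat B_0(\xi)|\le\|B_0\|_{L^1}$ (Hausdorff--Young) and use the elementary scaling estimate $\int_{\mathbb{R}^3}|\xi|^{2k}e^{-2|\xi|^2t}\,d\xi\lesssim(1+t)^{-\frac32-k}$; on $\{|\xi|\ge\eta\}$ I would use $e^{-2|\xi|^2t}\le e^{-2\eta^2t}\lesssim(1+t)^{-\frac32-k}$ together with Plancherel to pick up $\|\nabla^kB_0\|_{L^2}^2$. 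Adding the two contributions gives the asserted bound for $B$.

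The substantive part is the $(\varrho,u)$ estimate, which I would base on the explicit Green matrix $\hat G(\xi,t)$ displayed above (its upper--left $4\times4$ block). The key step is to establish uniform--in--$\xi$ pointwise bounds on the entries of $\hat G$. For $|\xi|$ small one has $\lambda_\pm=-(\mu+\tfrac\nu2)|\xi|^2\pm i|\xi|\sqrt{1-(\mu+\tfrac\nu2)^2|\xi|^2}$, hence ${\rm Re}\,\lambda_\pm=-(\mu+\tfrac\nu2)|\xi|^2$; choosing the splitting radius $\eta<(\mu+\tfrac\nu2)^{-1}$ keeps the discriminant positive there, and a direct inspection of the quotients (each having a bounded coefficient, and $\xi\xi^t/|\xi|^2$ being a bounded multiplier) shows every entry of $\hat G$ is $\lesssim e^{-\theta|\xi|^2t}$ on $\{|\xi|\le\eta\}$. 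For $|\xi|\ge\eta$ the eigenvalues $\lambda_\pm$ become, past the single critical value where the discriminant vanishes, real and negative, with $\lambda_-\sim-c|\xi|^2$ and $\lambda_+$ bounded and bounded away from $0$; reading $\frac{e^{\lambda_+t}-e^{\lambda_-t}}{\lambda_+-\lambda_-}$ as $t\,e^{\lambda t}$ at that lone frequency, one checks that every entry is $\lesssim e^{-\theta t}$ on $\{|\xi|\ge\eta\}$. With these multiplier bounds in hand, and writing $(\hat\varrho,\hat u)^{tr}=\hat G(\xi,t)(\hat\varrho_0,\hat u_0)^{tr}$, I would run exactly the same low/high split as for $B$: on low frequencies $|\hat\varrho_0|,|\hat u_0|\le\|(\varrho_0,u_0)\|_{L^1}$ combined with $\int_{|\xi|\le\eta}|\xi|^{2k}e^{-2\theta|\xi|^2t}\,d\xi\lesssim(1+t)^{-\frac32-k}$; on high frequencies $e^{-2\theta t}\lesssim(1+t)^{-\frac32-k}$ combined with $\int_{|\xi|\ge\eta}|\xi|^{2k}(|\hat\varrho_0|^2+|\hat u_0|^2)\,d\xi\lesssim\|\nabla^k(\varrho_0,u_0)\|_{L^2}^2$. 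This yields the claimed decay for $\varrho$ and $u$.

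The main obstacle is this second step: proving the uniform pointwise decay estimates for $\hat G(\xi,t)$ across the whole frequency range, in particular controlling the intermediate regime around the zero of the discriminant (where the representation formula degenerates into a $t\,e^{\lambda t}$ term) and observing that the high--frequency density/velocity entries, which decay only at the bounded rate $e^{-\theta t}$ rather than $e^{-\theta|\xi|^2t}$, are still enough to close the estimate. Once those multiplier bounds are secured, the remainder is the elementary frequency--splitting computation sketched above; alternatively one may simply invoke \cite{Duan-Yang}, where these decay estimates are proved.
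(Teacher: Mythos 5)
The paper gives no proof of this proposition: it simply observes that \eqref{leq1} is an independent coupling of the linearized Navier--Stokes equations with the heat equation and refers the reader to \cite{Duan-Yang} for the decay estimates. Your sketch is a correct outline of the standard argument underlying that citation --- Plancherel plus a low/high frequency splitting, with pointwise bounds on the entries of $\hat{G}(\xi,t)$ read off from the explicit eigenvalues $\lambda_{\pm}$ (note $\mu+\tfrac{\nu}{2}\ge\tfrac{2\mu}{3}>0$ under the stated viscosity conditions) --- and you correctly flag the only delicate points, namely the degenerate frequency where $\lambda_{+}=\lambda_{-}$ and the merely exponential-in-$t$ decay of the high-frequency density/velocity block, so the proposal is sound and consistent with the route the paper implicitly relies on.
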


In the sequel, we want to verify some estimates that play an important role
for us to derive decay rates for the compressible Hall-MHD equations
\eqref{eq1}-\eqref{eq4}.
\begin{equation}\label{non-estimates}
\begin{aligned}
\|(S_1, S_2, S_3)\|_{L^1}
&\lesssim \delta (\|\nabla \varrho\|_{L^2}+\|\nabla u\|_{H^1}+\|\nabla B\|_{H^1}),\\
\|(S_1, S_2, S_3)\|_{L^2}
&\lesssim \delta (\|\nabla \varrho\|_{L^2}+\|\nabla u\|_{H^1}+\|\nabla B\|_{H^1}),\\
\|\nabla (S_1, S_2, S_3)\|_{L^2}
&\lesssim \delta (\|\nabla^2 \varrho\|_{L^2}+\|\nabla^2 u\|_{L^2}+\|\nabla^2 B\|_{L^2})
+\|\nabla (\varrho, B)\|_{H^1}\|\nabla^2 (u, B)\|_{H^1}.\\
\end{aligned}
\end{equation}

Now, we establish the decay rates for the compressible Hall-MHD equations \eqref{eq1}-\eqref{eq4}.

\begin{lemm}\label{lemma2.6}
Under the assumptions of Theorem \ref{THM1}, the global strong solution $(\varrho, u, B)$
of problem \eqref{eq1}-\eqref{eq4} has the time decay rates
\begin{equation}\label{261}
\|\nabla^k \varrho (t)\|_{H^{2-k}}^2+\|\nabla^k u (t)\|_{H^{2-k}}^2
+\|\nabla^k B (t)\|_{H^{2-k}}^2
\le C(1+t)^{-\frac{3}{2}-k}
\end{equation}
for $k=0,1$.
\end{lemm}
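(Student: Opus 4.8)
The plan is to combine the energy estimates from Lemmas \ref{lemma2.2}--\ref{lemma2.4} with the Duhamel (Green function) representation and a bootstrap argument. First I would observe that the energy functional $\mathcal{E}_l^2(t)$ built in \eqref{253}--\eqref{254} satisfies, after choosing $l=0$,
$$
\frac{d}{dt}\mathcal{E}_0^2(t)+C_3\bigl(\|\nabla\varrho\|_{H^1}^2+\|\nabla(u,B)\|_{H^2}^2\bigr)\le 0 .
$$
The dissipation controls everything on the right except the zeroth-order term $\|(\varrho,u,B)\|_{L^2}^2$, so the standard trick is to add $\|(\varrho,u,B)\|_{L^2}^2$ to the left, giving a Lyapunov-type inequality of the form $\frac{d}{dt}\mathcal{E}_0^2+C(\mathcal{E}_0^2-\|(\varrho,u,B)\|_{L^2}^2)\le 0$. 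Thus the whole problem reduces to estimating the low-frequency quantity $\|(\varrho,u,B)(t)\|_{L^2}$, which is exactly where the Green function and the $L^1$ hypothesis enter.

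Next I would write the solution of \eqref{eq1}--\eqref{eq4} via Duhamel's formula against the linear semigroup:
$$
(\varrho,u,B)^{tr}(t)=G(t)*(\varrho_0,u_0,B_0)^{tr}+\int_0^t G(t-\tau)*(S_1,S_2,S_3)^{tr}(\tau)\,d\tau .
$$
Applying Proposition \ref{linearized-Decay} to the linear part and the standard $L^1$--$L^2$ decay of $G$ (namely $\|\nabla^k G(t)*f\|_{L^2}\lesssim (1+t)^{-3/4-k/2}(\|f\|_{L^1}+\|\nabla^k f\|_{L^2})$) to the Duhamel integral, and then inserting the nonlinear bounds \eqref{non-estimates}, I would obtain
$$
\|(\varrho,u,B)(t)\|_{L^2}\lesssim (1+t)^{-3/4}+\delta\int_0^t (1+t-\tau)^{-3/4}\bigl(\|\nabla\varrho\|_{L^2}+\|\nabla(u,B)\|_{H^1}\bigr)(\tau)\,d\tau .
$$
Define $\mathcal{M}(t):=\sup_{0\le\tau\le t}(1+\tau)^{3/4}\mathcal{E}_0(\tau)$ (using the equivalence \eqref{254}, $\mathcal{E}_0\approx\|(\varrho,u,B)\|_{H^2}$, so $\mathcal{M}(t)$ also controls the dissipative norms up to the loss of one derivative). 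Feeding the a priori bound $\|\nabla\varrho\|_{L^2}+\|\nabla(u,B)\|_{H^1}\lesssim (1+\tau)^{-3/4}\mathcal{M}(t)$ into the integral and using $\int_0^t(1+t-\tau)^{-3/4}(1+\tau)^{-3/4}d\tau\lesssim (1+t)^{-3/4}$, I would close the loop to get $\mathcal{M}(t)\lesssim 1+\delta\,\mathcal{M}(t)$, hence $\mathcal{M}(t)\le C$ for $\delta$ small. This yields the $k=0$ case of \eqref{261}.

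For the $k=1$ case I would run the same scheme one derivative higher: start from \eqref{253} with $l=1$, which gives decay of $\mathcal{E}_1^2\approx\|\nabla(\varrho,u,B)\|_{H^1}^2$ modulo the low-order term $\|\nabla(\varrho,u,B)\|_{L^2}^2$, and estimate the latter by differentiating the Duhamel formula once, using the $k=1$ linear decay rate $(1+t)^{-5/4}$ together with the already-established $k=0$ bounds to control the nonlinear integral (the term $\|\nabla(\varrho,B)\|_{H^1}\|\nabla^2(u,B)\|_{H^1}$ in \eqref{non-estimates} is quadratic in quantities that already decay, so it contributes a faster rate). The main obstacle is the bookkeeping in the Duhamel integral: because the dissipation in \eqref{253} loses one derivative relative to the energy (there is no $\|\varrho\|_{L^2}$ dissipation, only $\|\nabla\varrho\|$), the nonlinear source terms must be estimated in a norm one order below the energy, and one must check that the time-weighted integrals $\int_0^t (1+t-\tau)^{-\alpha}(1+\tau)^{-\beta}d\tau$ genuinely reproduce the claimed rate $(1+t)^{-3/4-k/2}$ rather than something slower — this requires $\alpha,\beta$ to be arranged so that $\min(\alpha,\beta)>1$ fails only borderline and the $\alpha\le 1$ factor is the one that survives; the estimates \eqref{non-estimates} are designed precisely so this works. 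A secondary technical point is justifying the $L^1$--$L^2$ bound on $\nabla^k G(t)*f$ uniformly for all $t\ge0$ (not just large $t$), which follows from splitting the Fourier integral into $|\xi|\le 1$ and $|\xi|\ge1$ using the explicit eigenvalues $\lambda_\pm,\lambda_0,\lambda_1$ listed above.
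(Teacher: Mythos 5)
There is a genuine gap in the $k=0$ step, and it is precisely the point that forces the paper to organize the proof in the opposite order. Your bootstrap for $\|(\varrho,u,B)\|_{L^2}$ feeds the ansatz $\|\nabla\varrho\|_{L^2}+\|\nabla(u,B)\|_{H^1}\lesssim(1+\tau)^{-3/4}\mathcal{M}(t)$ into the Duhamel integral and then invokes
$\int_0^t(1+t-\tau)^{-3/4}(1+\tau)^{-3/4}\,d\tau\lesssim(1+t)^{-3/4}$. This convolution bound is false: splitting at $t/2$, the piece $\int_0^{t/2}$ is bounded below by $c\,(1+t)^{-3/4}\int_0^{t/2}(1+\tau)^{-3/4}d\tau\approx(1+t)^{-1/2}$, so the integral is of order $(1+t)^{-1/2}$, not $(1+t)^{-3/4}$. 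The self-improving inequality $(1+t)^{-r}*(1+t)^{-r}\lesssim(1+t)^{-r}$ used in the paper requires $r>1$; it holds for $r=5/4$ and $r=5/2$ but fails for $r=3/4$. Consequently your $\mathcal{M}(t)\lesssim 1+\delta\mathcal{M}(t)$ does not close at the rate $(1+t)^{-3/4}$, and iterating at the degraded rate only makes things worse. (It could be rescued by exploiting that the sources are genuinely quadratic, $\|S_i\|_{L^1}\lesssim\|(\varrho,u,B)\|_{H^2}\|\nabla(\varrho,u,B)\|_{H^1}\lesssim(1+\tau)^{-3/2}\mathcal{M}^2$, but that is not the estimate \eqref{non-estimates} you are quoting, which carries only one decaying factor and a $\delta$.)

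The paper avoids this by proving the cases in the reverse order. It first runs the bootstrap at the level of first derivatives: taking $l=1$ in \eqref{253}, adding $\|\nabla(\varrho,u,B)\|_{L^2}^2$, applying Gronwall, and estimating $\|\nabla(\varrho,u,B)\|_{L^2}$ by Duhamel with the linear rate $(1+t)^{-5/4}$; since the source bound $\delta\|\nabla(\varrho,u,B)\|_{H^1}\lesssim\delta(1+\tau)^{-5/4}\sqrt{E(t)}$ now carries the exponent $5/4>1$, the convolution closes and $E(t)\le C$. The genuinely quadratic remainder $\|\nabla(\varrho,B)\|_{H^1}\|\nabla^2(u,B)\|_{H^1}$ in $\|\nabla S_i\|_{L^2}$ is handled by Cauchy--Schwarz in time together with the globally bounded dissipation $\int_0^t\|\nabla^2(u,B)\|_{H^1}^2\,d\tau\le C$ from \eqref{255} --- a step your sketch glosses over. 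Only then is the $k=0$ estimate performed: with the first-derivative decay already in hand, the sources decay like $(1+\tau)^{-5/4}$, and $\int_0^t(1+t-\tau)^{-3/4}(1+\tau)^{-5/4}\,d\tau\lesssim(1+t)^{-3/4}$ does hold because the exponent greater than one sits on the $(1+\tau)$ factor. You should restructure your argument accordingly: establish \eqref{261} for $k=1$ first, then deduce $k=0$.
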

\begin{proof}
Taking $l=1$ specially in \eqref{253}, it arrives at directly
\begin{equation}\label{262}
\frac{d}{dt}\mathcal{E}_1^2(t)+
C_3 \left(\|\nabla^{2} \varrho\|_{L^2}^2+\|\nabla^{2}u\|_{H^{1}}^2+\|\nabla^{2}B\|_{H^{1}}^2\right)\le 0,
\end{equation}
where $\mathcal{E}_1^2(t)$ is defined as
$$
\begin{aligned}
\mathcal{E}_1^2(t)=\|\nabla \varrho\|_{H^1}^2
+\|\nabla u\|_{H^1}^2+\|\nabla B\|_{H^1}^2
+\frac{2C_1\delta}{C_2}\int \nabla  u \cdot \nabla^2 \varrho dx.
\end{aligned}
$$
With the help of Young inequality and smallness of $\delta$, it is easy to deduce
\begin{equation}\label{263}
C_4^{-1} \|\nabla (\varrho, u, B)\|_{H^{1}}^2
\le
\mathcal{E}_1^2(t)
\le C_4 \|\nabla (\varrho, u, B)\|_{H^{1}}^2.
\end{equation}
Adding both hand sides of \eqref{262} by $\|\nabla (\varrho, u, B)\|_{L^2}^2$
and applying the equivalent relation \eqref{263}, then we have
$$
\frac{d}{dt}\mathcal{E}_1^2(t)+C\mathcal{E}^2_1(t)\le\|\nabla (\varrho, u, B)\|_{L^2}^2.
$$
In view of the Gronwall inequality, it follows immediately
\begin{equation}\label{264}
\begin{aligned}
\mathcal{E}_1^2(t)
&\le \mathcal{E}_1^2(0)e^{-Ct}
     +\int_0^t e^{-C(t-\tau)}\|\nabla (\varrho, u, B)(\tau)\|_{L^2}^2d\tau.
\end{aligned}
\end{equation}
In order to derive the time decay rate for $\mathcal{E}_1^2(t)$,  we need to control the term
$\|\nabla (\varrho, u, B)\|_{L^2}^2$. In fact, by Duhamel principle, we can represent the solutions
for the problem \eqref{eq1}-\eqref{eq4} as
\begin{equation}\label{265}
(\varrho, u, B)^{tr}(t)=G(t)*(\varrho_0, u_0, B_0)^{tr}+\int_0^t G(t-s)*(S_1,S_2, S_3)^{tr}(s) ds.
\end{equation}
Denoting
$$E(t)=\underset{0 \le \tau \le t}{\sup}
(1+\tau)^{\frac{5}{2}}(\|\nabla \varrho(\tau)\|_{H^{1}}^2+\|\nabla u(\tau)\|_{H^{1}}^2
+\|\nabla B(\tau)\|_{H^{1}}^2),$$
which, together with \eqref{265}, \eqref{non-estimates} and Proposition \ref{linearized-Decay},
gives directly
$$
\begin{aligned}
\|\nabla(\varrho, u, B)(t)\|_{L^2}
&\le C(1+t)^{-\frac{5}{4}}
  +C\int_0^t \left(\|(S_1, S_2, S_3)(\tau)\|_{L^1}
  +\|\nabla(S_1, S_2, S_3)(\tau)\|_{L^2}\right)(1+t-\tau)^{-\frac{5}{4}}d\tau\\
&\le C(1+t)^{-\frac{5}{4}}
  +C\int_0^t \delta\left(\|\nabla \varrho (\tau)\|_{H^1}+\|\nabla u (\tau)\|_{H^1}
  +\|\nabla B (\tau)\|_{H^1} \right)(1+t-\tau)^{-\frac{5}{4}}d\tau\\
&\quad \ +C\int_0^t
         \|\nabla (\varrho, B)(\tau)\|_{H^1}\|\nabla^2 (u, B)(\tau)\|_{H^1}
         (1+t-\tau)^{-\frac{5}{4}}d\tau\\
&\le C(1+t)^{-\frac{5}{4}}
  +C\delta \sqrt{E(t)}\int_0^t (1+t-\tau)^{-\frac{5}{4}}(1+\tau)^{-\frac{5}{4}}d\tau\\
&\quad +\ C\sqrt{E(t)}\left[\int (1+t-\tau)^{-\frac{5}{2}}(1+\tau)^{-\frac{5}{2}}d\tau\right]^{\frac{1}{2}}
  \left[\int_0^t \|\nabla^2 (u, B)(\tau)\|_{H^1}^2 d\tau\right]^{\frac{1}{2}}\\
&\le C(1+t)^{-\frac{5}{4}}
  +C\delta \sqrt{E(t)}(1+t)^{-\frac{5}{4}}\\
&\le (1+t)^{-\frac{5}{4}}(1+\delta\sqrt{E(t)}),
\end{aligned}
$$
where we have used the fact
$$
\begin{aligned}
&\int_0^t (1+t-\tau)^{-r}(1+\tau)^{-r}d\tau\\
&=\int_0^{\frac{t}{2}}+\int_{\frac{t}{2}}^{t}(1+t-\tau)^{-r}(1+\tau)^{-r}d\tau\\
&\le \left(1+\frac{t}{2}\right)^{-r}\int_0^{\frac{t}{2}}(1+\tau)^{-r}d\tau
+\left(1+\frac{t}{2}\right)^{-r} \int_{\frac{t}{2}}^{t}(1+t-\tau)^{-r}d\tau\\
&\le \left(1+t\right)^{-r},
\end{aligned}
$$
for $r=\frac{5}{2}$ and $r=\frac{5}{4}$ respectively.
Thus, we have the estimate
\begin{equation}\label{266}
\|\nabla(\varrho, u, B)(t)\|_{L^2}^2 \le C (1+t)^{-\frac{5}{2}}(1+\delta  E(t)).
\end{equation}
Inserting \eqref{266} into \eqref{264}, it follows immediately
\begin{equation}\label{267}
\begin{aligned}
\mathcal{E}^2_1(t)
&\le \mathcal{E}^2_1(0)e^{-Ct}+C\int_0^t e^{-C(t-\tau)} (1+\tau)^{-\frac{5}{2}}(1+\delta E(\tau))d\tau\\
&\le \mathcal{E}^2_1(0)e^{-Ct}+C(1+\delta E(t))\int_0^t e^{-C(t-\tau)} (1+\tau)^{-\frac{5}{2}}d\tau\\
&\le \mathcal{E}^2_1(0)e^{-Ct}+C(1+\delta E(t))(1+t)^{-\frac{5}{2}}\\
&\le C(1+\delta E(t))(1+t)^{-\frac{5}{2}},
\end{aligned}
\end{equation}
where we have used the fact
$$
\begin{aligned}
&\int_0^t e^{-C(t-\tau)} (1+\tau)^{-\frac{5}{2}}d\tau\\
&=\int_0^{\frac{t}{2}}+\int_{\frac{t}{2}}^t e^{-C(t-\tau)} (1+\tau)^{-\frac{5}{2}}d\tau\\
&\le e^{-\frac{c}{2}t}\int_0^{\frac{t}{2}}(1+\tau)^{-\frac{5}{2}}d\tau
     +\left(1+\frac{t}{2}\right)^{-\frac{5}{2}} \int_{\frac{t}{2}}^t e^{-C(t-\tau)}d\tau\\
&\le C\left(1+t\right)^{-\frac{5}{2}}.
\end{aligned}
$$
Hence, by virtue of the definition of $E(t)$  and \eqref{267}, it follows immediately
$$
E(t) \le C(1+\delta E(t)),
$$
which, in view of the smallness of $\delta$, gives
$$
E(t) \le C.
$$
Therefore, we have the following time decay rates
\begin{equation}\label{268}
\|\nabla \varrho (t)\|_{H^{1}}^2+\|\nabla u (t)\|_{H^{1}}^2+\|\nabla B (t)\|_{H^{2}}^2 \le C(1+t)^{-\frac{5}{2}}.
\end{equation}
On the other hand, by \eqref{265}, \eqref{non-estimates}, \eqref{268} and Proposition \ref{linearized-Decay},
it is easy to deduce
$$
\begin{aligned}
\|(\varrho, u, B)\|_{L^2}^2
&\le C(1+t)^{-\frac{3}{2}}
  +C\int_0^t \left(\|(S_1, S_2, S_3)\|_{L^1}^2+\|(S_1, S_2, S_3)\|_{L^2}^2\right)(1+t-\tau)^{-\frac{3}{2}}d\tau\\
&\le C(1+t)^{-\frac{3}{2}}
  +C\int_0^t \delta\left(\|\nabla \varrho (\tau)\|_{L^2}^2
  +\|\nabla u(\tau)\|_{H^1}^2
  +\|\nabla B(\tau)\|_{H^1}^2 \right)(1+t-\tau)^{-\frac{3}{2}}d\tau\\
&\le C(1+t)^{-\frac{3}{2}}
  +C\int_0^t (1+t-\tau)^{-\frac{5}{2}}(1+\tau)^{-\frac{3}{2}}d\tau\\
&\le C(1+t)^{-\frac{3}{2}},
\end{aligned}
$$
where we have used the fact
$$
\int_0^t (1+t-\tau)^{-\frac{5}{2}}(1+\tau)^{-\frac{3}{2}}d\tau
\le C(1+t)^{-\frac{3}{2}}.
$$
Hence, we have the following decay rate
\begin{equation}\label{269}
\|\varrho (t)\|_{L^2}^2+\|u (t)\|_{L^2}^2+\|B(t)\|_{L^2}^2
 \le C(1+t)^{-\frac{3}{2}}.
\end{equation}
Therefore, the combination of \eqref{268} and \eqref{269} completes the proof of the lemma.
\end{proof}

Finally, we establish optimal decay rates for the second  order derivatives of magnetic field.
\begin{lemm}\label{lemma2.7}
Under the assumptions of Theorem \ref{THM1}, then the magnetic field has the following decay rate
\begin{equation}\label{271}
\|\nabla^2 B (t)\|_{L^2} \le C(1+t)^{-\frac{7}{4}}.
\end{equation}
\end{lemm}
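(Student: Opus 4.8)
The plan is to apply the Fourier splitting method of Schonbek \cite{Schonbek} to the $\nabla^2 B$ energy, exploiting that $B$ obeys the heat equation \eqref{eq1}$_3$ whose source $S_3$ already decays fast by Lemma~\ref{lemma2.6} and \eqref{non-estimates}. Note that Lemma~\ref{lemma2.6} only yields $\|\nabla^2 B(t)\|_{L^2}\lesssim(1+t)^{-5/4}$, so what is needed is a genuine gain of one half-power, which the energy method alone cannot provide. The starting point is the identity obtained by applying $\nabla^2$ to \eqref{eq1}$_3$, testing against $\nabla^2 B$ and integrating over $\mathbb{R}^3$:
$\frac{d}{dt}\|\nabla^2 B\|_{L^2}^2+2\|\nabla^3 B\|_{L^2}^2=2\int\nabla^2 S_3\cdot\nabla^2 B\,dx$.

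The first and main task is to show that the nonlinear term is of strictly lower order, namely $2\int\nabla^2 S_3\cdot\nabla^2 B\,dx\le(\varepsilon+C\delta)\|\nabla^3 B\|_{L^2}^2+R(t)$ with $R(t)\lesssim(1+t)^{-5}$. To this end I would expand $\nabla^2 S_3$ by the Leibniz rule, integrate by parts so that in the Hall contribution the curl falls on $\nabla^2 B$ (producing $\nabla^2{\rm curl}B$, hence only $\|\nabla^3 B\|_{L^2}$ and no third derivative of $\varrho$) and so that third-order derivatives are removed from every velocity factor, and then estimate by Holder with exponents chosen (for instance $L^6\times L^6\times L^{3/2}$ or $L^3\times L^6\times L^2$) so that each third-order $B$-derivative occurs only in $L^2$, as $\|\nabla^3 B\|_{L^2}$, which is then absorbed into the dissipation by Young's inequality --- its prefactor being either a small Young constant or a factor $\delta$ coming from $\|B\|_{H^2}\lesssim\delta$. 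Using the Gagliardo-Nirenberg inequality of Lemma~\ref{lemma2.1}, the surviving lower-order factors are bounded by $\|\nabla\varrho\|_{H^1}$, $\|\nabla u\|_{H^1}$, $\|\nabla^2 B\|_{L^2}$, $\|u\|_{L^\infty}$, $\|\nabla u\|_{L^3}$, $\|B\|_{L^\infty}$ and $\|\nabla^2 u\|_{L^{3/2}}$, each $\lesssim(1+t)^{-5/4}$ by Lemma~\ref{lemma2.6}; since every remaining product contains at least two of them, $R(t)\lesssim(1+t)^{-5}$. Choosing $\varepsilon$ and $\delta$ small, this leaves $\frac{d}{dt}\|\nabla^2 B\|_{L^2}^2+\frac{3}{2}\|\nabla^3 B\|_{L^2}^2\le R(t)$.

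Next I would run the Fourier splitting. Fix $\gamma$ large and put $S(t)=\{\xi:|\xi|^2\le\gamma/(1+t)\}$; splitting $\|\nabla^3 B\|_{L^2}^2=\int_{\mathbb{R}^3}|\xi|^6|\hat B|^2\,d\xi$ over $S(t)$ and its complement gives $\|\nabla^3 B\|_{L^2}^2\ge\frac{\gamma}{1+t}\big(\|\nabla^2 B\|_{L^2}^2-\int_{S(t)}|\xi|^4|\hat B|^2\,d\xi\big)$, whence $\frac{d}{dt}\|\nabla^2 B\|_{L^2}^2+\frac{3\gamma}{2(1+t)}\|\nabla^2 B\|_{L^2}^2\le\frac{3\gamma}{2(1+t)}\int_{S(t)}|\xi|^4|\hat B|^2\,d\xi+R(t)$. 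On $S(t)$ one has $|\xi|^4\le\gamma^2(1+t)^{-2}$ and $|S(t)|\lesssim(1+t)^{-3/2}$, so $\int_{S(t)}|\xi|^4|\hat B|^2\,d\xi\lesssim(1+t)^{-7/2}\sup_\xi|\hat B(\xi,t)|^2$; and Duhamel for the heat equation, $\hat B(\xi,t)=e^{-|\xi|^2t}\hat B_0(\xi)+\int_0^t e^{-|\xi|^2(t-s)}\hat S_3(\xi,s)\,ds$, gives $\sup_\xi|\hat B(\xi,t)|\le\|B_0\|_{L^1}+\int_0^t\|S_3(s)\|_{L^1}\,ds\le C$, since $\|S_3(s)\|_{L^1}\lesssim\delta(1+s)^{-5/4}$ by \eqref{non-estimates} and Lemma~\ref{lemma2.6}. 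Hence both terms on the right-hand side are $\lesssim(1+t)^{-9/2}$, so multiplying by $(1+t)^{3\gamma/2}$ and integrating over $[0,t]$ (with $\gamma$ large) gives $(1+t)^{3\gamma/2}\|\nabla^2 B(t)\|_{L^2}^2\le C+C(1+t)^{3\gamma/2-7/2}$, that is $\|\nabla^2 B(t)\|_{L^2}\le C(1+t)^{-7/4}$.

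The hard part is the nonlinear estimate of the second paragraph: in the $H^2$-framework we do not control $\|\nabla^3 B\|_{L^2}$ or $\|\nabla^3 u\|_{L^2}$ pointwise in time, so the derivative bookkeeping in $\int\nabla^2 S_3\cdot\nabla^2 B$ --- above all in the Hall term $\int\nabla^2{\rm curl}[g(\varrho)(B\cdot\nabla B-\frac{1}{2}\nabla|B|^2)]\cdot\nabla^2 B$ --- must be arranged, via integration by parts and a careful choice of Holder exponents, so that every third-order derivative either is Young-absorbed into the dissipation $\|\nabla^3 B\|_{L^2}^2$ or carries the small factor $\delta$, while what remains is always a product of norms each decaying like $(1+t)^{-5/4}$. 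This bookkeeping, rather than the Fourier splitting itself, is where the real work lies.
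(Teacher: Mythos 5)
Your proposal follows essentially the same route as the paper: the $\nabla^2 B$ energy identity (the case $k=2$ of \eqref{223}), absorption of every occurrence of $\|\nabla^3 B\|_{L^2}$ into the dissipation with a prefactor $\varepsilon+C\delta$, a remainder of size $(1+t)^{-5}$ built from products of two norms each decaying like $(1+t)^{-5/4}$ by Lemma \ref{lemma2.6}, and then Schonbek's Fourier splitting. The nonlinear bookkeeping you outline is exactly what the paper carries out in \eqref{273}--\eqref{275} (reusing \eqref{2311} and \eqref{2313}), including the integration by parts that moves the curl in the Hall term onto $\nabla^2 B$. The one place where you genuinely deviate is the low-frequency term in the splitting: you estimate $\int_{S(t)}|\xi|^4|\hat B|^2\,d\xi\lesssim(1+t)^{-7/2}\sup_\xi|\hat B|^2$ and control $\sup_\xi|\hat B|$ by Duhamel together with $\int_0^t\|S_3(s)\|_{L^1}\,ds\le C$; the paper instead keeps one more factor of $|\xi|^2$, bounding the low frequencies by $\left(R/(1+t)\right)^2\|\nabla B\|_{L^2}^2\lesssim(1+t)^{-9/2}$ directly from \eqref{261} (see \eqref{FSM} and \eqref{277}). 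Both give the same $(1+t)^{-9/2}$ right-hand side; yours is closer to Schonbek's original argument but needs the extra (true) observation that $\|S_3\|_{L^1}$ is integrable in time, while the paper's version stays entirely inside the already-established $L^2$ decay and never invokes the Fourier representation of $B$ itself. One small caution: the H\"older pairing ``$L^6\times L^6\times L^{3/2}$'' you mention as an option would require bounding a second-order derivative in $L^{3/2}$, which is not controlled by the $L^2$-based Gagliardo--Nirenberg inequality of Lemma \ref{lemma2.1} (interpolation toward $p<2$ fails for lack of integrability at infinity); stick to the pairings in which every top-order factor lands in $L^2$, as in \eqref{273}--\eqref{275}.
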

\begin{proof}
Taking $k=2$ in \eqref{223}, it follows immediately
\begin{equation}\label{272}
\frac{1}{2}\frac{d}{dt}\int |\nabla^2 B|^2 \! dx
+\int |\nabla^3 B|^2dx=\int \nabla^2 S_3 \cdot \nabla^2 B~dx.
\end{equation}
By Holder, Sobolev and Young inequalities, we obtain
\begin{equation}\label{273}
\begin{aligned}
&\int \nabla^2(B\cdot \nabla u+B{\rm div}u)\nabla^2 B~dx\\
&\lesssim (\|\nabla u\|_{L^3}\|\nabla B\|_{L^6}
           +\|B\|_{L^\infty}\|\nabla^2 u\|_{L^2})\|\nabla^3 B\|_{L^2}\\
&\lesssim (\|\nabla u\|_{H^1}\|\nabla^2 B\|_{L^2}
           +\|\nabla B\|_{H^1}\|\nabla^2 u\|_{L^2})\|\nabla^3 B\|_{L^2}\\
&\lesssim \|\nabla (u,B)\|_{H^1}^2\|\nabla^2 (u,B)\|_{L^2}^2+\delta\|\nabla^3 B\|_{L^2}^2.
\end{aligned}
\end{equation}
It follows from \eqref{2311} and \eqref{2313} that
\begin{equation}\label{274}
\int \nabla^2(u\cdot \nabla B)\nabla^2 B~dx
\lesssim \|\nabla u\|_{H^1}^2\|\nabla^2 B\|_{L^2}^2+\delta\|\nabla^3 B\|_{L^2}^2,
\end{equation}
and
\begin{equation}\label{275}
\begin{aligned}
&-\int \nabla^2{\rm curl}\left[g(\varrho)\left(B\cdot \nabla B
                 -\nabla(\frac{1}{2}|B|^2)\right)\right]\nabla^2 B~dx\\
&\lesssim \|\nabla^2 \varrho\|_{L^2}^2\|\nabla B\|_{H^1}^2+\delta\|\nabla^3 B\|_{L^2}^2.\\
\end{aligned}
\end{equation}
Substituting \eqref{273}-\eqref{275} into \eqref{272} and
applying the time decay rates \eqref{261}, then we obtain
\begin{equation}\label{276}
\begin{aligned}
&\frac{d}{dt}\int |\nabla^2 B|^2dx+\int |\nabla^3 B|^2 dx\\
&\lesssim \|\nabla (u,B)\|_{H^1}^2\|\nabla^2 (u,B)\|_{L^2}^2
          +\|\nabla^2 \varrho\|_{L^2}^2\|\nabla B\|_{H^1}^2\\
&\lesssim (1+t)^{-\frac{5}{2}}(1+t)^{-\frac{5}{2}}
           +(1+t)^{-\frac{5}{2}}(1+t)^{-\frac{5}{2}}\\
&\lesssim (1+t)^{-5}.
\end{aligned}
\end{equation}
For some constant $R$ defined below, denoting the time sphere(see \cite{Schonbek})
$$
S_0=\left\{\left. \xi\in \mathbb{R}^3\right| |\xi|\le \left(\frac{R}{1+t}\right)^\frac{1}{2}\right\},
$$
it follows immediately
$$
\begin{aligned}
\int_{\mathbb{R}^3} |\nabla^{3} B|^2 dx
&\ge \int_{\mathbb{R}^3/S_0} |\xi|^6 |\hat{B}|^2d\xi\\
&\ge \frac{R}{1+t}\int_{\mathbb{R}^3/S_0} |\xi|^4 |\hat{B}|^2d\xi\\
&\ge \frac{R}{1+t}\int_{\mathbb{R}^3} |\xi|^4 |\hat{B}|^2 d\xi
     -\left(\frac{R}{1+t}\right)^2\int_{S_0} |\xi|^2|\hat{B}|^2 d\xi,
\end{aligned}
$$
or equivalently
\begin{equation}\label{FSM}
\int_{\mathbb{R}^3} |\nabla^3 B|^2 dx
\ge \frac{R}{1+t}\int_{\mathbb{R}^3} |\nabla^2 B|^2 dx
-\left(\frac{R}{1+t}\right)^2\int_{\mathbb{R}^3} |\nabla B|^2 dx.
\end{equation}
The combination of \eqref{276}, \eqref{FSM} and \eqref{261} yields directly
\begin{equation}\label{277}
\begin{aligned}
&\frac{d}{dt}\int |\nabla^2 B|^2dx+\frac{4}{1+t}\int |\nabla^2 B|^2 dx\\
&\le \frac{16}{(1+t)^2}\int |\nabla B|^2 dx+C(1+t)^{-5}\\
&\lesssim (1+t)^{-2}(1+t)^{-\frac{5}{2}}+(1+t)^{-5}\\
&\le C(1+t)^{-\frac{9}{2}},
\end{aligned}
\end{equation}
where we have chosen $R=4$ in \eqref{FSM}. Multiplying \eqref{277} by $(1+t)^4$, we obtain
\begin{equation}\label{278}
\frac{d}{dt}\left[(1+t)^4 \|\nabla^2 B\|_{L^2}^2\right]\le C(1+t)^{-\frac{1}{2}}.
\end{equation}
Integrating \eqref{278} over $[0,t]$, then we have the following decay rate
$$
\|\nabla^2 B (t)\|_{L^2}^2 \le C(1+t)^{-\frac{7}{2}}.
$$
Therefore, we complete the proof of the lemma.
\end{proof}

\emph{\bf{Proof of Theorem \ref{THM1}:}}
With the help of \eqref{255}, Lemma \ref{lemma2.6} and Lemma \ref{lemma2.7},
we complete the proof of Theorem \ref{THM1}.

\subsection{Proof of Theorem \ref{THM2}}

\quad In this subsection, we establish the decay rates for the time derivatives of
strong solutions.
\begin{lemm}\label{lemma3.1}
Under the assumptions of Theorem \ref{THM1}, the global strong solution $(\varrho, u, B)$
of problem \eqref{eq1}-\eqref{eq4} satisfies
\begin{equation}\label{31}
\begin{aligned}
&\|\varrho_t (t)\|_{H^1}+\|u_t (t)\|_{L^2}\le C(1+t)^{-\frac{5}{4}},\\
&\|B_t (t)\|_{L^2}\le C(1+t)^{-\frac{7}{4}}.
\end{aligned}
\end{equation}
\end{lemm}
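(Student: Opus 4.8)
The plan is to read off the time derivatives directly from the perturbation system \eqref{eq1}, express each of $\varrho_t$, $u_t$, $B_t$ in terms of spatial derivatives of $(\varrho,u,B)$ plus nonlinear terms, and then estimate every piece using the decay rates already proved in Theorem \ref{THM1}. Concretely, from \eqref{eq1}$_1$ we have $\varrho_t = -\,\mathrm{div}\,u + S_1$, from \eqref{eq1}$_2$ we have $u_t = \mu\Delta u + (\mu+\nu)\nabla\mathrm{div}\,u - \nabla\varrho + S_2$, and from \eqref{eq1}$_3$ we have $B_t = \Delta B + S_3$. So bounding the time derivatives reduces to bounding the linear terms (which carry the decay rates of Theorem \ref{THM1}) and the nonlinear source terms $S_i$ in the appropriate norms.

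First I would handle $u_t$ in $L^2$. The linear part needs $\|\nabla^2 u\|_{L^2}$ and $\|\nabla\varrho\|_{L^2}$, both of which decay like $(1+t)^{-5/4}$ by \eqref{Decay1} (with $k=1$). For $S_2$ in $L^2$, I would use the structure \eqref{eq2}, \eqref{eq3}, bound $|h|,|f|,|g|$ via \eqref{2.6}, and apply Hölder/Sobolev together with the decay estimates and the Remark following Theorem \ref{THM1} (which gives $L^p$ and $L^\infty$ decay); each quadratic term such as $u\cdot\nabla u$, $h(\varrho)\Delta u$, $f(\varrho)\nabla\varrho$, $g(\varrho)(B\cdot\nabla B)$ is a product of a decaying factor and a bounded-or-decaying factor, so one gets $\|S_2\|_{L^2}\lesssim(1+t)^{-5/4}$ at least (indeed faster). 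For $\varrho_t$ in $H^1$ one needs $\|\mathrm{div}\,u\|_{H^1}\lesssim\|\nabla u\|_{H^1}$, which is $(1+t)^{-5/4}$ by \eqref{Decay1}, plus $\|S_1\|_{H^1}$; since $S_1=-\varrho\,\mathrm{div}\,u-u\cdot\nabla\varrho$ is purely quadratic in $(\varrho,u)$ and their first derivatives, each factor decays and the $H^1$ norm of the product is $o((1+t)^{-5/4})$. For $B_t$ in $L^2$ the key improvement is that $\|\Delta B\|_{L^2}=\|\nabla^2 B\|_{L^2}\lesssim(1+t)^{-7/4}$ by Lemma \ref{lemma2.7}, and $S_3$ (see \eqref{eq2}) consists of $u\cdot\nabla B$, $B\cdot\nabla u$, $B\,\mathrm{div}\,u$ and a curl of a cubic-type term; using the faster decay of $B$-quantities and of $\nabla u$ one checks $\|S_3\|_{L^2}\lesssim(1+t)^{-7/4}$ as well, which yields the stated $(1+t)^{-7/4}$ bound for $B_t$.

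The main technical point — and the only place requiring care — is verifying the nonlinear bounds $\|S_1\|_{H^1}, \|S_2\|_{L^2}\lesssim(1+t)^{-5/4}$ and $\|S_3\|_{L^2}\lesssim(1+t)^{-7/4}$, because one must split each product so that the factor carrying the \emph{slowest} decay is controlled in $L^2$ (or $L^6$) by \eqref{Decay1}, while the remaining factors are placed in $L^\infty$ or $L^3$ and estimated either by the smallness $\delta$ or by a further decaying norm from Theorem \ref{THM1} and its Remark. For the curl term in $S_3$, integration against a test function is not available here (we are taking a pointwise-in-time norm, not an energy identity), so one simply uses $\|\mathrm{curl}(\cdot)\|_{L^2}\lesssim\|\nabla(\cdot)\|_{L^2}$ and distributes derivatives by the Leibniz rule, again pairing the worst factor with an $L^2$ decay estimate. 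Since every nonlinear contribution is at least quadratic and hence decays strictly faster than the corresponding linear term, the nonlinear terms are harmless and the rates are dictated by the linear parts: $(1+t)^{-5/4}$ for $\varrho_t, u_t$ and $(1+t)^{-7/4}$ for $B_t$. Assembling these estimates completes the proof of Lemma \ref{lemma3.1} and hence of Theorem \ref{THM2}.
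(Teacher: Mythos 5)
Your proposal is correct and follows essentially the same route as the paper: solve \eqref{eq1} for $\varrho_t$, $u_t$, $B_t$, bound the linear parts by the decay rates \eqref{Decay1} (with $\|\nabla^2 B\|_{L^2}\lesssim(1+t)^{-7/4}$ from Lemma \ref{lemma2.7} supplying the improved rate for $B_t$), and control the quadratic source terms $S_1,S_2,S_3$ by H\"older/Sobolev together with \eqref{non-estimates}, exactly as in the paper's estimates \eqref{32}--\eqref{35}.
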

\begin{proof}
By virtue of the equation \eqref{eq1}$_1$ and decay rates \eqref{Decay1}, we have
\begin{equation}\label{32}
\begin{aligned}
\|\varrho_t\|_{L^2}
&=\|{\rm div}u+\varrho{\rm div}u+u\cdot \nabla \varrho\|_{L^2}\\
&\le \|{\rm div}u\|_{L^2}+\|\varrho\|_{L^\infty}\|{\rm div}u\|_{L^2}+\|\nabla \varrho\|_{L^3}\|u\|_{L^6}\\
&\le C(1+t)^{-\frac{5}{4}}.
\end{aligned}
\end{equation}
Similarly, it follows immediately
\begin{equation}\label{33}
\begin{aligned}
\|\nabla \varrho_t\|_{L^2}
&=\|\nabla{\rm div}u+\nabla\varrho{\rm div}u+\varrho\nabla{\rm div}u
     +\nabla u\cdot \nabla \varrho+u\cdot \nabla^2 \varrho\|_{L^2}\\
&\lesssim \|\nabla {\rm div}u\|_{L^2}+\|\nabla \varrho\|_{L^3}\|\nabla u\|_{L^6}
     +\|(\varrho, u)\|_{L^\infty}\|\nabla^2 (\varrho, u)\|_{L^2}\\
&\lesssim \|\nabla^2 u\|_{L^2}+\|(\varrho, u)\|_{H^2}\|\nabla^2(\varrho, u)\|_{L^2}\\
&\le C(1+t)^{-\frac{5}{4}}.
\end{aligned}
\end{equation}
By virtue of the equation \eqref{eq1}$_2$, decay rates \eqref{Decay1}
and estimate \eqref{non-estimates}, we have
\begin{equation}\label{34}
\begin{aligned}
\|u_t\|_{L^2}
&=\|\mu\Delta u+(\mu+\nu)\nabla{\rm div}u-\nabla \varrho+S_2\|_{L^2}\\
&\lesssim \|\nabla^2 u\|_{L^2}+\|\nabla \varrho\|_{L^2}+\delta (\|\nabla \varrho\|_{L^2}
           +\|\nabla (u, B)\|_{H^1})\\
&\lesssim (1+t)^{-\frac{5}{4}}+(1+t)^{-\frac{5}{4}}\\
&\le  C(1+t)^{-\frac{5}{4}}.
\end{aligned}
\end{equation}
By virtue of \eqref{eq1}$_3$, \eqref{Decay1}, Holder and Sobolev inequalities, we obtain
\begin{equation}\label{35}
\begin{aligned}
\|B_t\|_{L^2}
&=\left\|\Delta B-u\cdot \nabla B+B\cdot \nabla u-B{\rm div}u
-{\rm curl}\left[g(\varrho)\!\left(B\cdot \nabla B-\frac{1}{2}\nabla(|B|^2)\!\right)\right]\right\|_{L^2}\\
&\lesssim \|\Delta B\|_{L^2}+\|u\|_{L^3}\|\nabla B\|_{L^6}+\|B\|_{L^3}\|\nabla u\|_{L^6}
+\|\nabla g(\varrho)\|_{L^2}\|B\|_{L^6}\|\nabla B\|_{L^6}\\
&\quad +\|g(\varrho)\|_{L^\infty}\|\nabla B\|_{L^3}\|\nabla B\|_{L^6}
       +\|g(\varrho)\|_{L^\infty}\|B\|_{L^\infty}\|\nabla^2 B\|_{L^2}\\
&\lesssim \|\nabla^2 B\|_{L^2}
          +\|(u, B)\|_{H^1}\|\nabla^2 (u, B)\|_{L^2}
          +\|\nabla \varrho\|_{L^2}\|\nabla B\|_{H^1}^2
          +\|\nabla B\|_{H^1}\|\nabla^2 B\|_{L^2}\\
&\lesssim (1+t)^{-\frac{7}{4}}+(1+t)^{-\frac{3}{4}}(1+t)^{-\frac{5}{4}}
          +(1+t)^{-\frac{5}{4}}(1+t)^{-\frac{5}{4}}\\
&\le C(1+t)^{-\frac{7}{4}}.
\end{aligned}
\end{equation}
In view of the decay rates \eqref{32}-\eqref{35}, we complete the proof of the lemma.
\end{proof}

\emph{\bf{Proof of Theorem \ref{THM2}:}}
With the help of Lemma \ref{lemma3.1}, we complete the proof of Theorem \ref{THM2}.

\section{Proof of Theorem \ref{THM3} and Theorem \ref{THM4}}

\quad In this section, we first establish optimal time decay rates for the
higher order spatial derivatives of global classical solutions
under the condition of small initial perturbation in $H^3$-norm and
finite initial perturbation in $L^1$-norm.
Furthermore, we also study the decay rates for the mixed space-time derivatives
of global classical solutions.

First of all, Fan et al.(see $(3.2)$ on Page $430$ in \cite{Fan-Zhou})
have established following estimate
\begin{equation}\label{smallness2}
\|(\varrho, u, B)(t)\|_{H^3}\le C \|(\varrho_0, u_0, B_0)\|_{H^3}\le C\varepsilon_0.
\end{equation}
Thus, the inequality \eqref{2.6} also holds on under the condition of \eqref{smallness}.

\subsection{Proof of Theorem \ref{THM3}}

\quad Just following the idea as Lemma \ref{lemma2.6}, it is easy to establish
optimal decay rates for the global classical solutions.
For the sake of brevity, we only state the results in the following lemma.

\begin{lemm}\label{lemma4.1}
Under the assumptions of Theorem \ref{THM3}, the global classical solution $(\varrho, u, B)$
of problem \eqref{eq1}-\eqref{eq4} satisfies for all $t \ge 0$,
\begin{equation}\label{411}
\|\nabla^k \varrho (t)\|_{H^{3-k}}^2+\|\nabla^k u (t)\|_{H^{3-k}}^2
+\|\nabla^k B (t)\|_{H^{3-k}}^2\le C (1+t)^{-\frac{3}{2}-k},
\end{equation}
where $k=0,1$.
\end{lemm}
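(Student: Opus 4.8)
The plan is to mimic the structure of the proof of Lemma~\ref{lemma2.6} verbatim, upgrading the Sobolev index from $H^2$ to $H^3$ throughout. The essential ingredients needed are: (i) an energy inequality of the form
\begin{equation}\label{plan1}
\frac{d}{dt}\mathcal{E}_l^2(t)+C\left(\|\nabla^{l+1}\varrho\|_{H^{2-l}}^2+\|\nabla^{l+1}(u,B)\|_{H^{3-l}}^2\right)\le 0,\quad l=0,1,
\end{equation}
where $\mathcal{E}_l^2(t)\approx \|\nabla^l(\varrho,u,B)\|_{H^{3-l}}^2$; (ii) the nonlinear estimates \eqref{non-estimates} together with their third-order analogues; and (iii) the linear decay of Proposition~\ref{linearized-Decay}, which already supplies the $(1+t)^{-\frac32-k}$ rates for $0\le k\le 2$. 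First I would re-derive the energy estimates of Lemmas~\ref{lemma2.2}--\ref{lemma2.4} one derivative higher, i.e. add the $k=3$ version of \eqref{224} and the $k=2$ version of \eqref{241}; these go through exactly as before because the a priori bound \eqref{smallness2} gives $\|(\varrho,u,B)\|_{H^3}\le C\varepsilon_0$ and hence \eqref{2.6}, so every commutator/product term is absorbed by $\delta=C\varepsilon_0$ times the dissipation. This yields \eqref{plan1}.

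Next, with $l=1$ in \eqref{plan1} I would close the low-frequency part as in \eqref{262}--\eqref{267}: add $\|\nabla(\varrho,u,B)\|_{L^2}^2$ to both sides, apply Gronwall, and use the Duhamel representation \eqref{265} together with \eqref{non-estimates} and Proposition~\ref{linearized-Decay} to bound $\|\nabla(\varrho,u,B)(t)\|_{L^2}^2\le C(1+t)^{-\frac52}(1+\delta E(t))$, where now
$$
E(t)=\sup_{0\le\tau\le t}(1+\tau)^{\frac52}\left(\|\nabla\varrho(\tau)\|_{H^2}^2+\|\nabla u(\tau)\|_{H^2}^2+\|\nabla B(\tau)\|_{H^2}^2\right).
$$
The time-weighted convolution lemma ($\int_0^t(1+t-\tau)^{-r}(1+\tau)^{-r}d\tau\le C(1+t)^{-r}$ for $r=\frac54,\frac52$) and the bound $\int_0^t\|\nabla^2(u,B)\|_{H^2}^2d\tau<\infty$ from the global energy estimate give $E(t)\le C$, hence $\|\nabla(\varrho,u,B)(t)\|_{H^2}^2\le C(1+t)^{-\frac52}$. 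Feeding this back through Duhamel with the $L^1$--$L^2$ linear decay at rate $(1+t)^{-\frac32}$ closes the $k=0$ estimate $\|(\varrho,u,B)(t)\|_{L^2}^2\le C(1+t)^{-\frac32}$, and combining the two gives \eqref{411} for $k=0,1$.

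The one genuinely new point, and the main obstacle, is that the argument above only controls $\|\nabla^k B\|_{L^2}$ up to $k=2$ directly from Proposition~\ref{linearized-Decay}; the third-order rate $\|\nabla^3 B(t)\|_{L^2}\le C(1+t)^{-\frac94}$ must be obtained separately, exactly as in Lemma~\ref{lemma2.7}, by the Fourier splitting method. Concretely I would take $k=3$ in the magnetic energy identity \eqref{223}, estimate $\int\nabla^3 S_3\cdot\nabla^3 B\,dx$ using the already-established faster decay of the lower-order norms so that the right-hand side is $O((1+t)^{-6})$ or better, then insert the splitting inequality $\int|\nabla^4 B|^2\,dx\ge \frac{R}{1+t}\int|\nabla^3 B|^2\,dx-(\frac{R}{1+t})^2\int|\nabla^2 B|^2\,dx$ with $R$ chosen large (e.g. $R=5$), multiply by $(1+t)^{5}$ and integrate. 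This delivers $\|\nabla^3 B(t)\|_{L^2}^2\le C(1+t)^{-\frac92}$. The care required is in the nonlinear term: the Hall term ${\rm curl}[g(\varrho)(B\cdot\nabla B-\frac12\nabla|B|^2)]$ at third order produces terms like $\nabla^3\varrho\,B\,\nabla B$ and $\nabla g(\varrho)\nabla^2 B\,\nabla B$ etc., which one integrates by parts onto $\nabla^3{\rm curl}B$ and bounds by $\delta\|\nabla^4 B\|_{L^2}^2$ plus products of already-decaying norms; this is routine given \eqref{2.6} and \eqref{smallness2} but must be done at the higher level. Assembling \eqref{411} for $k=0,1$ with this $\nabla^3 B$ estimate — and the analogous continuity/energy bounds that propagate $\|\nabla^2(\varrho,u)\|_{H^1}$ and $\|\nabla^2 B\|_{H^1}$ at the sharp rate — completes the proof of the lemma, and hence, after translating back via $\varrho=\rho-1$, of Theorem~\ref{THM3}.
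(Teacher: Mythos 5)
Your first two paragraphs are correct and follow exactly the route the paper intends: the paper gives no written proof of Lemma~\ref{lemma4.1} beyond ``follow Lemma~\ref{lemma2.6}'', and your upgrade of \eqref{253}--\eqref{267} to the $H^3$ level (energy functional $\mathcal{E}_1^2(t)\approx\|\nabla(\varrho,u,B)\|_{H^2}^2$, Gronwall, Duhamel with \eqref{non-estimates} and Proposition~\ref{linearized-Decay}) is precisely that argument and already yields \eqref{411} for $k=0,1$.

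Your third paragraph, however, mis-identifies an obstacle that is not there. Lemma~\ref{lemma4.1} with $k=1$ only asserts $\|\nabla B(t)\|_{H^2}^2\le C(1+t)^{-\frac52}$, i.e. $\|\nabla^3 B(t)\|_{L^2}\le C(1+t)^{-\frac54}$, and this is delivered automatically by the bound $E(t)\le C$, since $\mathcal{E}_1^2(t)$ controls the full $\|\nabla(\varrho,u,B)\|_{H^2}^2$ including the third-order derivatives; Proposition~\ref{linearized-Decay} is only used to handle the low-order source $\|\nabla(\varrho,u,B)\|_{L^2}^2$ in the Gronwall inequality, so the restriction $k\le 2$ there is irrelevant. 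The sharper rate $\|\nabla^3 B(t)\|_{L^2}\le C(1+t)^{-\frac94}$ that you propose to prove by Fourier splitting is the content of Lemma~\ref{lemma4.6}, not of this lemma, and it genuinely requires the improved rates for $\|\nabla^2(u,B)\|_{H^1}$ from Lemmas~\ref{lemma4.2} and \ref{lemma4.5}; trying to obtain it at this stage, as you suggest, would be premature (your claimed $O((1+t)^{-6})$ bound on the nonlinear term is not yet available here, only $O((1+t)^{-5})$). Since that paragraph is superfluous for the statement at hand, your proof stands on its first two paragraphs alone.
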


Next, we establish optimal time decay rates for the second order
spatial derivatives of magnetic field and enhance the time decay rates
for the third order spatial derivatives of magnetic field.
\begin{lemm}\label{lemma4.2}
Under the assumptions of Theorem \ref{THM3},
then the magnetic field has following decay rate for all $t \ge 0$,
\begin{equation}\label{421}
\|\nabla^2 B(t)\|_{H^1} \le C (1+t)^{-\frac{7}{4}}.
\end{equation}
\end{lemm}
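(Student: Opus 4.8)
\textbf{Proof proposal for Lemma \ref{lemma4.2}.}
The plan is to upgrade the decay of $\|\nabla^2 B\|_{L^2}$ from what follows for free out of Lemma \ref{lemma4.1} (namely $(1+t)^{-5/4}$) to the optimal rate $(1+t)^{-7/4}$, and then to control $\|\nabla^3 B\|_{L^2}$ at the same rate, exactly in the spirit of Lemma \ref{lemma2.7} but now with one more order of regularity available. First I would write down the energy identity for $\nabla^2 B$ obtained by applying $\nabla^2$ to \eqref{eq1}$_3$, multiplying by $\nabla^2 B$ and integrating: this is \eqref{272}, with right-hand side $\int \nabla^2 S_3 \cdot \nabla^2 B\,dx$. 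The three pieces of $S_3$ are handled as in \eqref{273}--\eqref{275}, using H\"older, Sobolev, \eqref{2.6} and the integration-by-parts trick $\int \nabla^2 {\rm curl}(\cdots)\nabla^2 B = -\int\nabla^2(\cdots)\nabla^2{\rm curl}B$; the outcome is a bound of the form
\begin{equation}\label{plan-rhs}
\frac{d}{dt}\|\nabla^2 B\|_{L^2}^2+\|\nabla^3 B\|_{L^2}^2
\lesssim \|\nabla(u,B)\|_{H^1}^2\|\nabla^2(u,B)\|_{L^2}^2
+\|\nabla^2\varrho\|_{L^2}^2\|\nabla B\|_{H^1}^2+\delta\|\nabla^3 B\|_{L^2}^2,
\end{equation}
and then \eqref{411} with $k=1$ forces the right-hand side (after absorbing the $\delta\|\nabla^3 B\|_{L^2}^2$ term) to be $\lesssim (1+t)^{-5}$, just as in \eqref{276}.

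Next I would run the Fourier splitting argument of Schonbek verbatim as in \eqref{FSM}--\eqref{278}: on the ball $S_0=\{|\xi|\le (R/(1+t))^{1/2}\}$ one splits $\int|\nabla^3 B|^2 = \int|\xi|^6|\hat B|^2$ to get
\begin{equation}\label{plan-fsm}
\int|\nabla^3 B|^2\,dx \ge \frac{R}{1+t}\int|\nabla^2 B|^2\,dx-\Big(\frac{R}{1+t}\Big)^2\int|\nabla B|^2\,dx,
\end{equation}
choose $R=4$ (so the coefficient of $\|\nabla^2 B\|_{L^2}^2$ beats the integrating factor power), insert into the energy inequality, and bound $\|\nabla B\|_{L^2}^2\lesssim (1+t)^{-5/2}$ from \eqref{411}. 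Multiplying the resulting inequality
$$
\frac{d}{dt}\|\nabla^2 B\|_{L^2}^2+\frac{4}{1+t}\|\nabla^2 B\|_{L^2}^2\le C(1+t)^{-9/2}
$$
by $(1+t)^4$ and integrating over $[0,t]$ gives $\|\nabla^2 B(t)\|_{L^2}^2\lesssim (1+t)^{-7/2}$, i.e. the $L^2$ half of \eqref{421}.

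Finally, for the $H^1$ part I would produce a companion estimate for $\|\nabla^3 B\|_{L^2}$. Apply $\nabla^3$ to \eqref{eq1}$_3$, pair with $\nabla^3 B$, to get $\frac{d}{dt}\|\nabla^3 B\|_{L^2}^2+\|\nabla^4 B\|_{L^2}^2\lesssim \int\nabla^3 S_3\cdot\nabla^3 B$; estimating the nonlinear terms using $H^3$-regularity (now admissible by \eqref{smallness2}) and the already-established decay \eqref{411} plus $\|\nabla^2 B\|_{L^2}^2\lesssim (1+t)^{-7/2}$, the right-hand side should again be $\lesssim (1+t)^{-9/2}$ or better. Then either a second Fourier-splitting step on $\nabla^3 B$ with the same sphere $S_0$ (using $\|\nabla^2 B\|_{L^2}^2\lesssim (1+t)^{-7/2}$ as the low-frequency correction) and the integrating factor $(1+t)^4$, or a Duhamel/$L^p$--$L^q$ bootstrap against Proposition \ref{linearized-Decay}, yields $\|\nabla^3 B(t)\|_{L^2}^2\lesssim (1+t)^{-7/2}$, and combining with the previous step gives \eqref{421}. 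I expect the main obstacle to be the nonlinear term $\int\nabla^3{\rm curl}[g(\varrho)(B\cdot\nabla B-\frac12\nabla|B|^2)]\nabla^3 B\,dx$: after integrating by parts onto $\nabla^3{\rm curl}B$ one must distribute three derivatives over a triple product, and the worst term $g(\varrho)\,\nabla^2 B\cdot\nabla^2 B$-type (and the top term $\|\nabla^4 B\|_{L^2}$) has to be controlled so that it is either absorbed by the good dissipation $\|\nabla^4 B\|_{L^2}^2$ with a $\delta$ factor, or bounded by a product of decaying norms; keeping careful track of which factor carries the $L^\infty$ (hence two derivatives of regularity) versus the decaying $L^2$ norm is the delicate bookkeeping here.
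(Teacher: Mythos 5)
Your proposal is correct and follows essentially the same route as the paper: an $H^3$-level energy identity for $\nabla^3 B$ with the nonlinear right-hand side bounded by $\|\nabla(u,B)\|_{H^2}^2\|\nabla^2(u,B)\|_{H^1}^2\lesssim(1+t)^{-5}$, combined with the $\nabla^2 B$ inequality \eqref{277} and Schonbek's Fourier splitting plus the integrating factor $(1+t)^4$. The only cosmetic difference is that the paper adds the two differential inequalities (using $R=5$ for the $\nabla^3 B$ level so the combined coefficient is $4/(1+t)$) and integrates once, whereas you treat $\nabla^2 B$ and $\nabla^3 B$ sequentially; both yield \eqref{421}.
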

\begin{proof}
Taking $k=3$ in \eqref{223}, it follows immediately
\begin{equation}\label{422}
\begin{aligned}
&\frac{1}{2}\frac{d}{dt}\int |\nabla^3 B|^2 dx+\int \!|\nabla^4 B|^2 dx\\
&=\int \nabla^3\left[-u\cdot \nabla B+B\cdot \nabla u-B{\rm div}u
-{\rm curl}\left(g(\varrho)\!\left(B\cdot \nabla B-\frac{1}{2}\nabla(|B|^2)\right)\right)\right]\nabla^3 B dx\\
&=I\!I\!I_1+I\!I\!I_2+I\!I\!I_3+I\!I\!I_4.
\end{aligned}
\end{equation}
By virtue of \eqref{smallness2}, Holder and Sobolev inequalities, it arrives at
\begin{equation}\label{423}
\begin{aligned}
I\!I\!I_1
&=\int (\nabla^2 u \nabla B+2\nabla u \nabla^2 B+u\nabla^3 B)\nabla^4 B dx\\
&\lesssim (\|\nabla^2 u\|_{L^3}\|\nabla B\|_{L^6}+\|\nabla u\|_{L^3}\|\nabla^2 B\|_{L^6}
         +\|u\|_{L^3}\|\nabla^3 B\|_{L^6})\|\nabla^4 B\|_{L^2}\\
&\lesssim \|\nabla^2 u\|_{L^3}^2\|\nabla^2 B\|_{L^2}^2+\|\nabla u\|_{L^3}^2\|\nabla^3 B\|_{L^2}^2
+(\varepsilon+\varepsilon_0)\|\nabla^4 B\|_{L^2}^2\\
&\lesssim \|\nabla u\|_{H^2}^2\|\nabla^2 B\|_{H^1}^2
+(\varepsilon+\varepsilon_0)\|\nabla^4 B\|_{L^2}^2.
\end{aligned}
\end{equation}
In view of the Sobolev and Young inequalities, we obtain
\begin{equation}\label{424}
\begin{aligned}
I\!I\!I_2
&=-\int (\nabla^2 B\nabla u+2\nabla B \nabla^2 u+B\nabla^3 u)\nabla^4 B dx\\
&\lesssim (\|\nabla^2 B\|_{L^6}\|\nabla u\|_{L^3}+\|\nabla B\|_{L^6}\|\nabla^2 u\|_{L^3}
           +\|B\|_{L^\infty}\|\nabla^3 u\|_{L^2})\|\nabla^4 B\|_{L^2}\\
&\lesssim (\|\nabla^3 B\|_{L^2}\|\nabla u\|_{H^1}+\|\nabla^2 B\|_{L^2}\|\nabla^2 u\|_{H^1}
            +\|\nabla B\|_{H^1}\|\nabla^3 u\|_{L^2})\|\nabla^4 B\|_{L^2}\\
&\lesssim \|\nabla^2 B\|_{H^1}^2\|\nabla u\|_{H^2}^2
           +\|\nabla B\|_{H^1}^2\|\nabla^3 u\|_{L^2}^2
           +\varepsilon\|\nabla^4 B\|_{L^2}^2.\\
\end{aligned}
\end{equation}
In the same manner, we get
\begin{equation}\label{425}
I\!I\!I_3\lesssim
\|\nabla^2 B\|_{H^1}^2\|\nabla u\|_{H^2}^2
           +\|\nabla B\|_{H^1}^2\|\nabla^3 u\|_{L^2}^2
           +\varepsilon\|\nabla^4 B\|_{L^2}^2.
\end{equation}
Applying \eqref{2.6}, \eqref{smallness2}, Holder, Sobolev and Young inequalities,
it is easy to deduce
\begin{equation}\label{426}
\begin{aligned}
I\!I\!I_4
&\lesssim (\|\nabla^3 g(\varrho)\|_{L^2}\|B\|_{L^\infty}\|\nabla B\|_{L^\infty}
          +\|\nabla^2 g(\varrho)\|_{L^3}\|\nabla B\|_{L^6}\|\nabla B\|_{L^\infty}
          )\|\nabla^4 B\|_{L^2}\\
&\quad +(\|\nabla^2 g(\varrho)\|_{L^3}\|B\|_{L^\infty}\|\nabla^2 B\|_{L^6}
         +\|\nabla g(\varrho)\|_{L^6}\|\nabla B\|_{L^6}\|\nabla^2 B\|_{L^6})
         \|\nabla^4 B\|_{L^2}\\
&\quad +(\|\nabla g(\varrho)\|_{L^6}\|B\|_{L^6}\|\nabla^3 B\|_{L^6}
         +\|g(\varrho)\|_{L^\infty}\|\nabla B\|_{L^3}\|\nabla^3 B\|_{L^6})
         \|\nabla^4 B\|_{L^2}\\
&\quad +\|g(\varrho)\|_{L^\infty}\|B\|_{L^\infty}\|\nabla^4 B\|_{L^2}^2\\
&\lesssim \|\nabla B\|_{H^1}^2\|\nabla^2 B\|_{H^1}^2
          +\|\nabla^2 B\|_{L^2}^2\|\nabla^2 B\|_{H^1}^2
          +\|\nabla B\|_{H^1}^2\|\nabla^3 B\|_{L^2}^2
          +(\varepsilon+\varepsilon_0)\|\nabla^4 B\|_{L^2}^2\\
&\lesssim \|\nabla B\|_{H^1}^2\|\nabla^2 B\|_{H^1}^2
          +(\varepsilon+\varepsilon_0)\|\nabla^4 B\|_{L^2}^2.\\
\end{aligned}
\end{equation}
Substituting \eqref{423}-\eqref{426} into \eqref{422}
and applying the smallness of $\varepsilon$ and $\varepsilon_0$,
it is easy to deduce
\begin{equation}\label{427}
\frac{d}{dt}\int |\nabla^3 B|^2 dx+\int |\nabla^4 B|^2 dx
\lesssim \|\nabla(u, B)\|_{H^2}^2\|\nabla^2(u, B)\|_{H^1}^2,
\end{equation}
which, together with the time decay rates \eqref{411}, yields directly
\begin{equation}\label{428}
\frac{d}{dt}\int |\nabla^3 B|^2 dx+\int |\nabla^4 B|^2 dx
\lesssim (1+t)^{-5}.
\end{equation}
Similar to \eqref{FSM}, it is easy to deduce
\begin{equation}\label{429}
\int |\nabla^4 B|^2 dx\ge \frac{5}{1+t}\int |\nabla^3 B|^2 dx-\left(\frac{5}{1+t}\right)^2\int |\nabla^2 B|^2dx.
\end{equation}
The combination of \eqref{411}, \eqref{428} and \eqref{429} gives directly
$$
\begin{aligned}
&\frac{d}{dt}\int |\nabla^3 B|^2 dx+\frac{5}{1+t} \int |\nabla^3 B|^2 dx\\
&\lesssim \frac{25}{(1+t)^2}\int |\nabla^2 B|^2 dx+(1+t)^{-5}\\
&\lesssim (1+t)^{-2}(1+t)^{-\frac{5}{2}}+(1+t)^{-5}\\
&\lesssim (1+t)^{-\frac{9}{2}},
\end{aligned}
$$
which, together with \eqref{277}, yields directly
\begin{equation}\label{4210}
\frac{d}{dt}\int (|\nabla^2 B|^2+|\nabla^3 B|^2) dx+\frac{4}{1+t}\int (|\nabla^2 B|^2+|\nabla^3 B|^2) dx
\lesssim (1+t)^{-\frac{9}{2}}.
\end{equation}
Multiplying \eqref{4210} by $(1+t)^{4}$, it arrives at
$$
\frac{d}{dt}\left[(1+t)^{4}\|\nabla^2 B\|_{H^1}^2\right]\le C(1+t)^{-\frac{1}{2}},
$$
which, integrating over $[0, t]$, gives
$$
\|\nabla^2 B (t)\|_{H^1}^2 \le C(1+t)^{-\frac{7}{2}}.
$$
Therefore, we complete the proof of the lemma.
\end{proof}

In order to establish optimal decay rate for the third order spatial derivatives of
magnetic field, we need to improve the decay rate for the second and third order
spatial derivatives of velocity. Indeed, following the idea as the compressible
MHD equations(see \cite{Gao-Chen-Yao}), it is easy to verify
the time decay rates \eqref{411} also holds on for $k=2$.
For the convenience of readers, we also introduce the method to improve the
decay rates for the second order spatial derivatives of density and velocity here.

\begin{lemm}\label{lemma4.3}
Under the assumptions of Theorem \ref{THM3}, the global classical solution $(\varrho, u, B)$
of Cauchy problem \eqref{eq1}-\eqref{eq4} has
\begin{equation}\label{431}
\frac{d}{dt}\|\nabla^2(\varrho, u)\|_{H^1}^2+\mu \|\nabla^3 u\|_{H^1}^2
\le C_5\left[(1+t)^{-5}+\varepsilon_0 \|\nabla^3 \varrho\|_{L^2}^2\right].
\end{equation}
\end{lemm}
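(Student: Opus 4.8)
The plan is to derive an energy estimate for $\|\nabla^2(\varrho,u)\|_{H^1}^2$ that mirrors the $k=2$ case of Lemma \ref{lemma2.3} together with the dissipation-recovery estimate of Lemma \ref{lemma2.4}, but now keeping track of the magnetic contributions as genuine source terms with the fast decay $(1+t)^{-5/2}$ coming from Lemma \ref{lemma4.2} and the decay rates \eqref{411}. First I would take $\nabla^2$ and $\nabla^3$ of \eqref{eq1}$_1$ and \eqref{eq1}$_2$, pair with $\nabla^2\varrho,\nabla^3\varrho$ and $\nabla^2u,\nabla^3u$ respectively, and add, to obtain
$$
\frac{1}{2}\frac{d}{dt}\|\nabla^2(\varrho,u)\|_{H^1}^2+\mu\|\nabla^3u\|_{H^1}^2+(\mu+\nu)\|\nabla^2\,{\rm div}\,u\|_{H^1}^2=\sum_{j=2,3}\int\nabla^jS_1\cdot\nabla^j\varrho\,dx+\sum_{j=2,3}\int\nabla^jS_2\cdot\nabla^ju\,dx.
$$
The terms in $S_1$ and the $u\cdot\nabla u$, $h(\varrho)$-, $f(\varrho)$-pieces of $S_2$ are handled exactly as in \eqref{233}--\eqref{238} and their $\nabla^3$-analogues, producing $\lesssim\varepsilon_0(\|\nabla^2\varrho\|_{H^1}^2+\|\nabla^3u\|_{H^1}^2)$; the factor $\varepsilon_0$ comes from \eqref{smallness2}. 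The magnetic term $g(\varrho)(B\cdot\nabla B-\tfrac12\nabla|B|^2)$ in $S_2$ is the one I would estimate separately: using Hölder, Sobolev and \eqref{2.6} as in \eqref{239}, bound it by $\lesssim\|\nabla B\|_{H^1}^2\|\nabla^2B\|_{H^1}^2+\varepsilon_0\|\nabla^3u\|_{H^1}^2$ (plus lower order), which by \eqref{411} and \eqref{421} is $\lesssim(1+t)^{-5/2}(1+t)^{-5/2}+\varepsilon_0\|\nabla^3u\|_{H^1}^2=(1+t)^{-5}+\varepsilon_0\|\nabla^3u\|_{H^1}^2$.

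Next, to recover the $\|\nabla^3\varrho\|_{L^2}^2$ dissipation (note that $\nabla^2\varrho$ dissipation is already available on the right as $(1+t)^{-5}$ via \eqref{411}, but the statement asks for $\nabla^3\varrho$ on the right with the small coefficient $\varepsilon_0$), I would run the analogue of Lemma \ref{lemma2.4} with $k=2$: pair $\nabla^2$ of \eqref{eq1}$_2$ with $\nabla^3\varrho$, transfer the time derivative onto $\varrho$ using \eqref{eq1}$_1$ as in \eqref{243}, giving
$$
\frac{d}{dt}\int\nabla^2u\cdot\nabla^3\varrho\,dx+c\|\nabla^3\varrho\|_{L^2}^2\lesssim\|\nabla^3u\|_{L^2}^2+\|\nabla^4u\|_{L^2}^2+\|\nabla^4B\|_{L^2}^2+(\text{nonlinear, }\lesssim\varepsilon_0\|\nabla^3\varrho\|_{L^2}^2+(1+t)^{-5}).
$$
Then I would form $\mathcal{E}^2=\|\nabla^2(\varrho,u)\|_{H^1}^2+\eta\int\nabla^2u\cdot\nabla^3\varrho\,dx$ for a small fixed $\eta$, absorb the $\|\nabla^3u\|_{H^1}^2$ and $\|\nabla^4B\|_{L^2}^2$ terms — the latter is problematic since $\nabla^4B$ is beyond the $H^3$ regularity, so here I would instead invoke \eqref{428}/\eqref{427}, i.e. $\|\nabla^4B\|_{L^2}^2$ appears integrated against the good dissipation and can be controlled by $(1+t)^{-5}$ after using the Fourier-splitting estimate \eqref{429}, or alternatively estimate $\int\nabla^2[\cdots]\nabla^3\varrho$ by integrating by parts to move one derivative off $B$. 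After choosing $\varepsilon_0$ small and relabeling constants, this yields \eqref{431}.

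The main obstacle I anticipate is the magnetic source term at top order: the naive bound for $\int\nabla^3(g(\varrho)(B\cdot\nabla B))\cdot\nabla^3 u\,dx$ wants a $\nabla^4$ on either $u$ or $B$, and for the $\nabla^4 u$-piece that is fine (it is the dissipation we are producing), but the pieces forcing $\nabla^4B$ or $\nabla^4\varrho$ must be avoided — one must integrate by parts to keep all derivatives at order $\le 3$ on $B$ and at order $\le 2$ on $\varrho$ while paying at most $\varepsilon_0\|\nabla^3u\|_{H^1}^2$, and then supply the decay from \eqref{411} and \eqref{421}. The bookkeeping of which factor carries the $(1+t)^{-5/4}$ versus the $(1+t)^{-7/4}$ rate (so that every quadratic magnetic product decays at least like $(1+t)^{-5}$) is the delicate part; everything else is a routine repetition of the estimates in Lemmas \ref{lemma2.3} and \ref{lemma2.4}.
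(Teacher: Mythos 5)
Your overall shape for the energy identity is the same as the paper's: apply $\nabla^2$ and $\nabla^3$ to the $(\varrho,u)$ equations only, and treat every $B$-dependent piece of $S_2$ as a source that decays like $(1+t)^{-5}$ thanks to \eqref{411} and \eqref{421}. However, there is a genuine gap in how you treat the \emph{non}-magnetic nonlinearities. You propose to handle the $S_1$-terms and the $f(\varrho)\nabla\varrho$-term ``exactly as in \eqref{233}--\eqref{238}'', which produces a bound of the form $\varepsilon_0\bigl(\|\nabla^2\varrho\|_{H^1}^2+\|\nabla^3 u\|_{H^1}^2\bigr)$. The component $\varepsilon_0\|\nabla^2\varrho\|_{L^2}^2$ is fatal: at this stage $\|\nabla^2\varrho\|_{L^2}^2$ is only known to decay like $(1+t)^{-5/2}$, so it is neither $O((1+t)^{-5})$ nor of the absorbable form $\varepsilon_0\|\nabla^3\varrho\|_{L^2}^2$, and when the inequality is later fed into the Fourier-splitting argument (the analogue of \eqref{458}) the term $\varepsilon_0\|\nabla^2\varrho\|_{L^2}^2$ on the right dominates the dissipation $\tfrac{R}{1+t}\|\nabla^2\varrho\|_{L^2}^2$ gained on the left for large $t$, and the argument collapses. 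The paper avoids this by re-estimating these terms from scratch: in \eqref{433} one integration by parts puts $\nabla^3\varrho$ as the top-order factor and the remaining quadratic product is bounded by $(1+t)^{-5}$ using the already-established rates \eqref{411} on \emph{both} factors; similarly in \eqref{435} for $f(\varrho)\nabla\varrho$. The only surviving non-decaying terms are $\varepsilon\|\nabla^3\varrho\|_{L^2}^2$, $\varepsilon\|\nabla^3 u\|_{L^2}^2$ and $\varepsilon\|\nabla^4 u\|_{L^2}^2$, all at top order and hence absorbable.

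A second, more structural point: you have misread the role of $\varepsilon_0\|\nabla^3\varrho\|_{L^2}^2$ in \eqref{431}. It sits on the \emph{right} as an unavoidable error term (coming from the top-order pieces of $\nabla^3 S_1\cdot\nabla^3\varrho$, etc.), not as a dissipation to be manufactured; consequently the cross-term estimate $\tfrac{d}{dt}\int\nabla^2 u\cdot\nabla^3\varrho\,dx+C\|\nabla^3\varrho\|_{L^2}^2\le\dots$ does not belong in this lemma at all --- it is the separate Lemma \ref{lemma4.4} (see \eqref{441}), and the two are only combined afterwards in Lemma \ref{lemma4.5}. By folding the cross term into a modified energy $\mathcal{E}^2$ here you end up proving a differential inequality for $\mathcal{E}^2$ rather than for $\|\nabla^2(\varrho,u)\|_{H^1}^2$ as \eqref{431} asserts, and you create for yourself the spurious $\|\nabla^4 B\|_{L^2}^2$ difficulty: in $\int\nabla^2 S_2\cdot\nabla^3\varrho\,dx$ the magnetic part of $\nabla^2 S_2$ involves at most $\nabla^3 B$, and the paper's \eqref{446} disposes of it with the decay rates, so no derivative of $B$ beyond the $H^3$ regularity ever appears.
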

\begin{proof}
Taking $k=2$ specially in \eqref{222}, then we obtain
\begin{equation}\label{432}
\begin{aligned}
&\frac{1}{2}\frac{d}{dt}\int( |\nabla^2 \varrho|^2+\!|\nabla^2 u|^2) dx
+\int( \mu |\nabla^3 u|^2+(\mu+\nu)|\nabla^2 {\rm div} u|^2) dx\\
&=\int \nabla^2 S_1 \cdot \nabla^2 \varrho \ dx+\int \nabla^2 S_2 \cdot \nabla^2 u \ dx.
\end{aligned}
\end{equation}
Integrating by part and applying \eqref{411}, Holder, Sobolev and
Young inequalities, we obtain
\begin{equation}\label{433}
\begin{aligned}
\int \nabla^2 S_1 \cdot \nabla^2 \varrho \ dx
&= \int \nabla (\varrho {\rm div}u+u\cdot\nabla \varrho)\cdot \nabla^3 \varrho dx\\
&\lesssim (\|\nabla \varrho\|_{L^3}\|\nabla u\|_{L^6}
          +\|\nabla^2 u\|_{L^3}\|\varrho\|_{L^6}
          +\|\nabla^2 \varrho\|_{L^3}\| u\|_{L^6})\|\nabla^3 \varrho\|_{L^2}\\
&\lesssim (1+t)^{-5}+\varepsilon\|\nabla^3 \varrho\|_{L^2}^2.
\end{aligned}
\end{equation}
From the estimates \eqref{236} and \eqref{237}, it is obtain the estimates
\begin{equation}\label{434}
\int \nabla^2[-u\!\cdot \!\nabla u-\!h(\varrho)[\mu\Delta u+(\mu+\nu)\nabla {\rm div}u]]
\nabla^2 u dx \lesssim \varepsilon_0 \|\nabla^3 u\|_{L^2}^2.
\end{equation}
Integrating by part and applying \eqref{2.6}, \eqref{411},
Holder and Sobolev inequalities, we obtain
\begin{equation}\label{435}
\begin{aligned}
&\int \nabla^2 [-f(\varrho)\nabla \varrho]\nabla^2 u~ dx\\
&\lesssim (\|f(\varrho)\|_{L^\infty}\|\nabla^2 \varrho\|_{L^2}+\|\nabla f(\varrho)\|_{L^3}\|\nabla \varrho\|_{L^6})
           \|\nabla^3 u\|_{L^2}\\
&\lesssim (\|\varrho\|_{L^\infty}+\|\nabla \varrho\|_{L^3})
           \|\nabla^2 \varrho\|_{L^2}\|\nabla^3 u\|_{L^2}\\
&\lesssim \|\nabla \varrho\|_{H^1}^2\|\nabla^2 \varrho\|_{L^2}^2
           +\varepsilon\|\nabla^3 u\|_{L^2}^2\\
&\lesssim (1+t)^{-5}+\varepsilon\|\nabla^3 u\|_{L^2}^2.
\end{aligned}
\end{equation}
In the same manner, it is easy to deduce
\begin{equation}\label{436}
\begin{aligned}
&\int \nabla^2 \left[g(\varrho)\!\left(B\cdot \nabla B-\frac{1}{2}\nabla(|B|^2)\!\right)\right]\nabla^2 u~dx\\
&\approx\int(\nabla g(\varrho)B \nabla B+g(\varrho)\nabla B \nabla B+g(\varrho)B \nabla^2 B)\nabla^3 u dx\\
&\lesssim (\|\nabla \varrho\|_{L^6}\|B\|_{L^6}\|\nabla B\|_{L^6}
           +\|g(\varrho)\|_{L^\infty}\|\nabla B\|_{L^3}\|\nabla B\|_{L^6})\|\nabla^3 u\|_{L^2}\\
&\quad  +\|g(\varrho)\|_{L^\infty}\|B\|_{L^6}\|\nabla^2 B\|_{L^3}\|\nabla^3 u\|_{L^2}\\
&\lesssim \|\nabla^2 \varrho\|_{L^2}^2\|\nabla B\|_{L^2}^2\|\nabla^2 B\|_{L^2}^2
           +\|\nabla B\|_{L^3}^2\|\nabla^2 B\|_{L^2}^2\\
&\quad     +\|\nabla B\|_{L^2}^2\|\nabla^2 B\|_{L^3}^2
           +\varepsilon\|\nabla^3 u\|_{L^2}^2\\
&\lesssim (1+t)^{-5}+\varepsilon\|\nabla^3 u\|_{L^2}^2.
\end{aligned}
\end{equation}
In view of the estimates $\eqref{434}-\eqref{436}$, it is easy deduce
\begin{equation}\label{437}
\int \nabla^2 S_2 \cdot \nabla^2 u \ dx
\lesssim (1+t)^{-5}+\varepsilon_0 \|\nabla^3 u\|_{L^2}^2.
\end{equation}
Substituting \eqref{433} and \eqref{437} into \eqref{432} and applying the smallness
of $\varepsilon$ and $\varepsilon_0$, we obtain
\begin{equation}\label{438}
\frac{d}{dt}\int (|\nabla^2 \varrho|^2 +|\nabla^2 u|^2) dx
+\mu\int |\nabla^3 u|^2 dx \lesssim (1+t)^{-5}+\varepsilon\|\nabla^3 \varrho\|_{L^2}^2.
\end{equation}
Taking $k=3$ in \eqref{222} specially, then we have
\begin{equation}\label{439}
\begin{aligned}
&\frac{1}{2}\frac{d}{dt}\int (|\nabla^3 \varrho|^2+|\nabla^3 u|^2) dx
+\int (\mu |\nabla^{4} u|^2+(\mu+\nu)|\nabla^3 {\rm div}u|^2 )dx\\
&=\int \nabla^3(-\varrho{\rm div}u-u\cdot \nabla \varrho) \nabla^3 \varrho dx
+\int \nabla^3 \left[-u\!\cdot \!\nabla u-\!h(\varrho)(\mu\Delta u+(\mu+\nu)\nabla {\rm div}u)\right] \nabla^3 u ~dx\\
&\quad +\int \nabla^3 \left[-f(\varrho)\nabla \varrho+g(\varrho)(B\cdot \nabla B-\frac{1}{2}\nabla(|B|^2))\right]
      \nabla^3 u ~dx\\
&=IV_1+IV_2+IV_3+IV_4+IV_5+IV_6+IV_7.
\end{aligned}
\end{equation}
Applying the Holer and Sobolev inequalities, it is easy to deduce
\begin{equation}\label{4310}
\begin{aligned}
IV_1
&\lesssim (\|\nabla^3 \varrho\|_{L^2}\|\nabla u\|_{L^\infty}
           +\|\nabla^2 \varrho\|_{L^6}\|\nabla^2 u\|_{L^3})\|\nabla^3 \varrho\|_{L^2}\\
&\quad \   +(\|\nabla \varrho\|_{L^3}\|\nabla^3 u\|_{L^6}
           +\|\varrho\|_{L^\infty}\|\nabla^4 u\|_{L^2})\|\nabla^3 \varrho\|_{L^2}\\
&\lesssim \varepsilon_0(\|\nabla^3 \varrho\|_{L^2}^2+\|\nabla^4 u\|_{L^2}^2).
\end{aligned}
\end{equation}
Similarly, it is easy to deduce
\begin{equation}\label{4311}
IV_2
\lesssim \varepsilon_0(\|\nabla^3 \varrho\|_{L^2}^2+\|\nabla^4 u\|_{L^2}^2).
\end{equation}
Integrating by part and applying decay rates \eqref{411},
Holder and Sobolev inequalities, it arrives at
\begin{equation}\label{4312}
\begin{aligned}
IV_3
&=\int \nabla^2 (u\cdot \nabla u)\nabla^4 u \ dx\\
&\lesssim (\|\nabla u\|_{L^3}\|\nabla^2 u\|_{L^6}+\|u\|_{L^3}\|\nabla^3 u\|_{L^6})\|\nabla^4 u\|_{L^2}\\
&\lesssim \|\nabla u\|_{L^3}^2\|\nabla^3 u\|_{L^2}^2
          +(\varepsilon+\varepsilon_0)\|\nabla^4 u\|_{L^2}^2\\
&\lesssim (1+t)^{-5}+(\varepsilon+\varepsilon_0)\|\nabla^4 u\|_{L^2}^2.
\end{aligned}
\end{equation}
In view of \eqref{2.6}, \eqref{411},  Holer and Sobolev inequalities, we obtain
\begin{equation}\label{4313}
\begin{aligned}
IV_4
&\approx \int \nabla^2 (h(\varrho) \nabla^2 u)\nabla^4 u \ dx\\
&=\int (\nabla^2 h(\varrho)\nabla^2 u+2\nabla h(\varrho)\nabla^3 u+h(\varrho)\nabla^4 u)\nabla^4 u dx\\
&\lesssim (\|\nabla \varrho\|_{L^6}^2\|\nabla^2 u\|_{L^6}
           +\|\nabla^2 \varrho\|_{L^3}\|\nabla^2 u\|_{L^6})\|\nabla^4 u\|_{L^2}\\
&\quad   +(\|\nabla \varrho\|_{L^3}\|\nabla^3 u\|_{L^6}
           +\|h(\varrho)\|_{L^\infty}\|\nabla^4 u\|_{L^2})\|\nabla^4 u\|_{L^2}\\
&\lesssim \|\nabla^2 \varrho\|_{H^1}^2\|\nabla^3 u\|_{L^2}^2
          +(\varepsilon+\varepsilon_0)\|\nabla^4 u\|_{L^2}^2\\
&\lesssim (1+t)^{-5}+(\varepsilon+\varepsilon_0)\|\nabla^4 u\|_{L^2}^2.
\end{aligned}
\end{equation}
Similarly, it is easy to deduce immediately
\begin{equation}\label{4314}
\begin{aligned}
IV_5
&=\int (f(\varrho)\nabla^3 \varrho+2\nabla f(\varrho)\nabla^2 \varrho
       +\nabla^2 f(\varrho)\nabla \varrho)\nabla^4 udx\\
&\lesssim (\|f(\varrho)\|_{L^\infty}\|\nabla^3 \varrho\|_{L^2}
          +\|\nabla \varrho\|_{L^6}^2\|\nabla \varrho\|_{L^6}
          +\|\nabla^2 \varrho\|_{L^3}\|\nabla \varrho\|_{L^6})\|\nabla^4 u\|_{L^2}\\
&\lesssim \|\nabla \varrho\|_{H^1}^2\|\nabla^3 \varrho\|_{L^2}^2
          +\|\nabla^2 \varrho\|_{L^2}^6
          +\|\nabla^2 \varrho\|_{L^3}^2\|\nabla^2 \varrho\|_{L^2}^2
          +\varepsilon\|\nabla^4 u\|_{L^2}^2\\
&\lesssim (1+t)^{-5}+\varepsilon\|\nabla^4 u\|_{L^2}^2.
\end{aligned}
\end{equation}
Integrating by part and applying \eqref{2.6}, \eqref{411},
Holder and Sobolev inequalities, we get
\begin{equation}\label{4315}
\begin{aligned}
IV_6
&=-\int (\nabla^2 g(\varrho)B\nabla B+2\nabla g(\varrho)\nabla(B\nabla B)+g(\varrho)\nabla^2(B\nabla B))\nabla^4 u dx\\
&\lesssim (\|\nabla^2 g(\varrho)\|_{L^6}\|B\|_{L^6}\|\nabla B\|_{L^6}
     +\|\nabla g(\varrho)\|_{L^6}\|\nabla B\|_{L^6}^2)\|\nabla^4 u\|_{L^2}\\
&\quad    \ +(\|\nabla g(\varrho)\|_{L^6}\|B\|_{L^6}\|\nabla^2 B\|_{L^6}
            +\|g(\varrho)\|_{L^\infty}\|\nabla B\|_{L^6}\|\nabla^2 B\|_{L^3})\|\nabla^4 u\|_{L^2}\\
&\quad   \  +\|g(\varrho)\|_{L^\infty}\|B\|_{L^\infty}\|\nabla^3 B\|_{L^2}\|\nabla^4 u\|_{L^2}\\
&\lesssim \|\nabla B\|_{L^2}^2\|\nabla^2 B\|_{L^2}^2
     +\|\nabla^2 B\|_{L^2}^2\|\nabla^2 B\|_{H^1}^2
     +\varepsilon \|\nabla^4 u\|_{L^2}^2\\
&\lesssim \|\nabla B\|_{H^1}^2\|\nabla^2 B\|_{H^1}^2+\varepsilon \|\nabla^4 u\|_{L^2}^2\\
&\lesssim (1+t)^{-5}+\varepsilon \|\nabla^4 u\|_{L^2}^2.
\end{aligned}
\end{equation}
In the same manner, it arrives at directly
\begin{equation}\label{4316}
IV_7 \lesssim (1+t)^{-5}+\varepsilon \|\nabla^4 u\|_{L^2}^2.
\end{equation}
Substituting \eqref{4310}-\eqref{4316} into \eqref{439}, we obtain
$$
\frac{d}{dt}\int (|\nabla^3 \varrho|^2+|\nabla^3 u|^2) dx
+\mu\int |\nabla^4 u|^2 dx
\lesssim (1+t)^{-5}+\varepsilon_0\|\nabla^3 \varrho\|_{L^2}^2,
$$
which, together with \eqref{438}, completes the proof of the lemma.
\end{proof}

Next, we establish the inequality to recover the dissipation estimate for $\varrho$.

\begin{lemm}\label{lemma4.4}
Under the assumptions in Theorem \ref{THM3}, the global classical solution $(\varrho, u, B)$
of Cauchy problem \eqref{eq1}-\eqref{eq4} satisfies
\begin{equation}\label{441}
\frac{d}{dt}\int \nabla^2 u\cdot \nabla^3 \varrho dx+C_6 \int |\nabla^3 \varrho|^2 dx
\le C_7\left[ (1+t)^{-5}+\|\nabla^3 u\|_{H^1}^2\right].
\end{equation}
\end{lemm}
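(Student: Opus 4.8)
The plan is to establish the $k=2$ analogue of Lemma \ref{lemma2.4}, following the same scheme. First I would apply $\nabla^2$ to the momentum equation \eqref{eq1}$_2$, test the resulting identity against $\nabla^3\varrho$ and integrate over $\mathbb{R}^3$; the pressure term gives the desired dissipation $\int|\nabla^3\varrho|^2\,dx$, while the viscous term $\int\nabla^2[\mu\Delta u+(\mu+\nu)\nabla{\rm div}u]\cdot\nabla^3\varrho\,dx$ is controlled by $\varepsilon\|\nabla^3\varrho\|_{L^2}^2+C_\varepsilon\|\nabla^4 u\|_{L^2}^2$ through Cauchy--Schwarz and Young's inequality. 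The delicate term is $\int\nabla^2 u_t\cdot\nabla^3\varrho\,dx$: exactly as in \eqref{243} I would write it as $\frac{d}{dt}\int\nabla^2 u\cdot\nabla^3\varrho\,dx-\int\nabla^2 u\cdot\nabla^3\varrho_t\,dx$, substitute $\varrho_t=-{\rm div}u-{\rm div}(\varrho u)$ from the continuity equation \eqref{eq1}$_1$, and integrate by parts once, so that it becomes $\frac{d}{dt}\int\nabla^2 u\cdot\nabla^3\varrho\,dx-\int|\nabla^2{\rm div}u|^2\,dx-\int\nabla^2{\rm div}u\cdot\nabla^2{\rm div}(\varrho u)\,dx$. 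After rearranging, the identity reduces the problem to estimating the two nonlinear integrals $\int\nabla^2{\rm div}u\cdot\nabla^2{\rm div}(\varrho u)\,dx$ and $\int\nabla^2 S_2\cdot\nabla^3\varrho\,dx$, since $\int|\nabla^2{\rm div}u|^2\,dx\le\|\nabla^3 u\|_{L^2}^2$ is trivial.

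For $\int\nabla^2{\rm div}u\cdot\nabla^2{\rm div}(\varrho u)\,dx$ I would expand $\nabla^3(\varrho u)$ by the Leibniz rule into $\varrho\nabla^3 u$, $\nabla\varrho\,\nabla^2 u$, $\nabla^2\varrho\,\nabla u$ and $u\,\nabla^3\varrho$, estimate each factor by Hölder and the Gagliardo--Nirenberg inequality of Lemma \ref{lemma2.1}, and pair with $\|\nabla^3 u\|_{L^2}$. The piece $u\,\nabla^3\varrho$ gives $\|u\|_{L^\infty}\|\nabla^3\varrho\|_{L^2}\|\nabla^3 u\|_{L^2}\lesssim\varepsilon_0\|\nabla^3\varrho\|_{L^2}^2+\varepsilon_0\|\nabla^3 u\|_{L^2}^2$, whose first summand is absorbed by the left-hand side; $\varrho\nabla^3 u$ contributes $\varepsilon_0\|\nabla^3 u\|_{L^2}^2$; and the two middle pieces, after Young's inequality, are dominated by $\|\nabla^3 u\|_{H^1}^2$ together with a quantity decaying at least like $(1+t)^{-5}$, using the decay rates \eqref{411} (and its extension to $k=2$ noted just before Lemma \ref{lemma4.3}) and \eqref{421}.

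The integral $\int\nabla^2 S_2\cdot\nabla^3\varrho\,dx$ is handled term by term, following the same decompositions already used in \eqref{235}, \eqref{238} and \eqref{437}. The quasilinear piece $\nabla^2[h(\varrho)(\mu\Delta u+(\mu+\nu)\nabla{\rm div}u)]$ produces a top-order factor $\nabla^4 u$ (through $h(\varrho)\nabla^4 u$), which is harmless precisely because $\|\nabla^4 u\|_{L^2}^2$ is part of the right-hand side of \eqref{441}; the piece $\nabla^2[f(\varrho)\nabla\varrho]$ produces $f(\varrho)\nabla^3\varrho$, giving $\|f(\varrho)\|_{L^\infty}\|\nabla^3\varrho\|_{L^2}^2\lesssim\varepsilon_0\|\nabla^3\varrho\|_{L^2}^2$, absorbed into $C_6\int|\nabla^3\varrho|^2\,dx$; the convective piece $\nabla^2(u\cdot\nabla u)$ and the magnetic piece $\nabla^2[g(\varrho)(B\cdot\nabla B-\frac{1}{2}\nabla|B|^2)]$ only involve $u$, $B$ and at most two derivatives of $\varrho$, so that Hölder, Gagliardo--Nirenberg and Young reduce their contribution to products of the decay rates \eqref{411} and \eqref{421} — each decaying no slower than $(1+t)^{-5/2}$, hence $(1+t)^{-5}$ after squaring in Young's inequality — plus small multiples of $\|\nabla^3\varrho\|_{L^2}^2$ and of $\|\nabla^3 u\|_{H^1}^2$. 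Collecting everything, moving the $\varepsilon$- and $\varepsilon_0$-terms carrying $\|\nabla^3\varrho\|_{L^2}^2$ to the left and choosing $C_6$ small, yields \eqref{441}.

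The main obstacle is not a single hard estimate but the bookkeeping: one must ensure that every occurrence of $\|\nabla^3\varrho\|_{L^2}^2$ on the right carries a coefficient proportional to $\varepsilon_0$ or to an adjustable $\varepsilon$, that the only top-order velocity norm appearing is $\|\nabla^4 u\|_{L^2}$ (which sits inside $\|\nabla^3 u\|_{H^1}$), and — most importantly — that every purely $(u,B)$-type nonlinear product, together with the mixed products coupling a decaying derivative of $\varrho$ to $\nabla^3 u$ or $\nabla^4 u$, genuinely decays no slower than $(1+t)^{-5/2}$. This last point is exactly where the refined rates for $\nabla^2 B$ and $\nabla^3 B$ from Lemma \ref{lemma4.2} and the $k=2$ instance of \eqref{411} are indispensable; the crude $L^1$--$L^2$ decay rates alone would be too weak to reach the $(1+t)^{-5}$ appearing in \eqref{441}.
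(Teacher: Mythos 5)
Your proposal follows essentially the same route as the paper: testing $\nabla^2$ of the momentum equation against $\nabla^3\varrho$, swapping the time derivative onto $\varrho$ via the continuity equation exactly as in \eqref{243}, absorbing the $\varepsilon\|\nabla^3\varrho\|_{L^2}^2$ contributions, and bounding the remaining nonlinear integrals by $(1+t)^{-5}+\|\nabla^3 u\|_{H^1}^2$ using the decay rates already established. The only cosmetic difference is that the paper integrates by parts once more in the term $\int\nabla^2{\rm div}u\cdot\nabla^2{\rm div}(\varrho u)\,dx$ (pairing with $\nabla^4 u$ and thereby avoiding the $u\,\nabla^3\varrho$ piece), whereas you expand by Leibniz and absorb that piece with $\varepsilon_0$; both work, and your bookkeeping is sound.
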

\begin{proof}
Taking $\nabla^2$ operator on both hand sides of $\eqref{eq1}_1$,
multiplying by $\nabla^2 \varrho$ and integrating over $\mathbb{R}^3$, then we have
\begin{equation}\label{442}
\int (\nabla^2 u_t \cdot \nabla^3 \varrho+|\nabla^3 \varrho|^2) dx
=\int [\mu \Delta \nabla^2 u+(\mu+\nu)\nabla^3{\rm div}u+\nabla^2 S_2]\nabla^3 \varrho~ dx.
\end{equation}
In order to deal with the term $\int \nabla^2 u_t \cdot \nabla^3 \varrho dx$, we turn the time derivatives
of velocity to density and apply the transport equation $\eqref{eq1}_1$. More precisely, we get
\begin{equation}\label{443}
\begin{aligned}
\int \nabla^2 u_t \cdot \nabla^3 \varrho dx
&=\frac{d}{dt}\int \nabla^2 u \cdot \nabla^3 \varrho dx-\int \nabla^2 u \cdot\nabla^3 \varrho_t dx\\
&=\frac{d}{dt}\int \nabla^2 u \cdot \nabla^3 \varrho dx+\int \nabla^2 {\rm div} u
                                                        \cdot \nabla^2 \varrho_t dx\\
&=\frac{d}{dt}\int \nabla^2 u \cdot \nabla^3 \varrho dx
   -\int \nabla^2 {\rm div} u \cdot \nabla^2({\rm div}u+\varrho {\rm div}u+u\nabla \varrho)dx.
\end{aligned}
\end{equation}
Substituting \eqref{443} into \eqref{442}, it arrives at
\begin{equation}\label{444}
\begin{aligned}
&\frac{d}{dt}\int \nabla^2 u \cdot \nabla^3 \varrho dx+\int |\nabla^3 \varrho|^2 dx\\
&=\int |\nabla^2 {\rm div} u|^2 dx+\int \nabla^2 {\rm div} u \cdot \nabla^2(\varrho {\rm div}u+u\nabla \varrho)dx
+\int \nabla^2 S_2 \cdot \nabla^3 \varrho dx\\
&\quad +\int [\mu \Delta \nabla^2 u+(\mu+\nu)\nabla^3{\rm div}u]\nabla^3 \varrho dx.
\end{aligned}
\end{equation}
With the help of time decay rates \eqref{411}, Holder and Sobolev inequalities, we obtain
\begin{equation}\label{445}
\begin{aligned}
&\int \nabla^2 {\rm div} u \cdot \nabla^2(\varrho {\rm div}u+u\nabla \varrho)dx\\
&=-\int \nabla^3{\rm div}u \cdot \nabla(\varrho {\rm div}u+u\nabla \varrho)dx\\
&\lesssim (\|\nabla \varrho\|_{L^3}\|\nabla u\|_{L^6}
         +\|\varrho\|_{L^6}\|\nabla^2 u\|_{L^3}+\|u\|_{L^6}\|\nabla^2 \varrho\|_{L^3})
         \|\nabla^4 u\|_{L^2}\\
&\lesssim \|\nabla \varrho\|_{H^1}^2\|\nabla^2 u\|_{H^1}^2
         +\|\nabla u\|_{L^2}^2\|\nabla^2 \varrho\|_{H^1}^2
         +\varepsilon \|\nabla^4 u\|_{L^2}^2\\
&\lesssim (1+t)^{-5}+\|\nabla^4 u\|_{L^2}^2.\\
\end{aligned}
\end{equation}
On the other hand, just following the idea as \eqref{4312}-\eqref{4315}, we have
\begin{equation}\label{446}
\int \nabla^2 S_2 \cdot \nabla^3 \varrho dx
\lesssim (1+t)^{-5}+\|\nabla^4 u\|_{L^2}^2+\varepsilon\|\nabla^3 \varrho\|_{L^2}^2
\end{equation}
and
\begin{equation}\label{447}
\int [\mu \Delta \nabla^2 u+(\mu+\nu)\nabla^3{\rm div}u]\nabla^3 \varrho dx
\lesssim \|\nabla^4 u\|_{L^2}^2+\varepsilon\|\nabla^3 \varrho\|_{L^2}^2.
\end{equation}
Plugging \eqref{445}-\eqref{447} into \eqref{444}, we complete the proof of lemma.
\end{proof}

Now, we establish optimal decay rates for the second order spatial
derivatives of density and velocity.

\begin{lemm}\label{lemma4.5}
Under the assumptions in Theorem \ref{THM3},
then the density and velocity have following decay rate
\begin{equation}\label{451}
\|\nabla^2 \varrho(t)\|_{H^1}+\|\nabla^2 u(t)\|_{H^1}
\le  C(1+t)^{-\frac{7}{4}}
\end{equation}
for all $t\ge T^* ( T^* \ \text{is a constant defined below})$.
\end{lemm}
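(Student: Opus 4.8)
The plan is to combine the energy estimate \eqref{431} of Lemma \ref{lemma4.3} with the dissipation-recovery estimate \eqref{441} of Lemma \ref{lemma4.4} into a single Lyapunov functional, and then to run Schonbek's Fourier splitting argument exactly as in the proof of Lemma \ref{lemma2.7}, but with the splitting radius taken large. Concretely, I would set
$$
\mathcal{H}(t):=\|\nabla^2(\varrho,u)(t)\|_{H^1}^2+\eta\int \nabla^2 u\cdot\nabla^3\varrho\,dx,
$$
with $\eta\in(0,1]$ a small constant. Since $\left|\eta\int \nabla^2 u\cdot\nabla^3\varrho\,dx\right|\le\frac{\eta}{2}\|\nabla^2(\varrho,u)\|_{H^1}^2$, one has $\frac12\|\nabla^2(\varrho,u)(t)\|_{H^1}^2\le\mathcal{H}(t)\le 2\|\nabla^2(\varrho,u)(t)\|_{H^1}^2$. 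Adding $\eta$ times \eqref{441} to \eqref{431}, the dissipation becomes $\mu\|\nabla^3 u\|_{H^1}^2+\eta C_6\|\nabla^3\varrho\|_{L^2}^2$ while the right-hand side picks up the extra terms $\eta C_7\|\nabla^3 u\|_{H^1}^2$ and $C_5\varepsilon_0\|\nabla^3\varrho\|_{L^2}^2$; choosing first $\eta$ small so that $\eta C_7<\mu/2$, and \emph{then} $\varepsilon_0$ small so that $C_5\varepsilon_0<\eta C_6/2$, these are absorbed and I obtain, for some $C_8>0$,
$$
\frac{d}{dt}\mathcal{H}(t)+C_8\|\nabla^3(\varrho,u)(t)\|_{L^2}^2\le C(1+t)^{-5}.
$$

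Next I would split the dissipation in two halves and apply to one of them the Fourier splitting inequality (derived exactly as \eqref{FSM}, now for the vector $(\varrho,u)$)
$$
\|\nabla^3(\varrho,u)\|_{L^2}^2\ge\frac{R}{1+t}\|\nabla^2(\varrho,u)\|_{L^2}^2-\left(\frac{R}{1+t}\right)^2\|\nabla(\varrho,u)\|_{L^2}^2,
$$
keeping the other half. Using $\|\nabla(\varrho,u)(t)\|_{L^2}^2\le C(1+t)^{-5/2}$ from \eqref{411} with $k=1$, the last term contributes only $CR^2(1+t)^{-9/2}$. Setting $T^*:=R-1$, so that $R/(1+t)\le1$ for $t\ge T^*$, the retained half $\frac{C_8}{2}\|\nabla^3(\varrho,u)\|_{L^2}^2$ dominates $\frac{C_8R}{2(1+t)}\|\nabla^3(\varrho,u)\|_{L^2}^2$, and combining with $\frac{C_8R}{2(1+t)}\|\nabla^2(\varrho,u)\|_{L^2}^2$ together with the equivalence above yields
$$
\frac{d}{dt}\mathcal{H}(t)+\frac{C_8R}{4(1+t)}\mathcal{H}(t)\le C(1+t)^{-9/2},\qquad t\ge T^*.
$$

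The decisive point is that $R$ is a free parameter, so I would fix it once and for all large enough that $C_8R/4>\frac72$; this is exactly what forces the restriction $t\ge T^*=R-1$ in the statement. Multiplying by $(1+t)^{C_8R/4}$, integrating over $[T^*,t]$, and using that $\mathcal{H}(T^*)<\infty$ (by \eqref{411} applied at $t=T^*$), the homogeneous part decays faster than $(1+t)^{-7/2}$ while the forcing integral produces exactly $C(1+t)^{-7/2}$, so $\mathcal{H}(t)\le C(1+t)^{-7/2}$. Hence $\|\nabla^2(\varrho,u)(t)\|_{H^1}^2\le 2\mathcal{H}(t)\le C(1+t)^{-7/2}$, which is \eqref{451}; combined with \eqref{421} this also gives \eqref{411} for $k=2$.

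The routine parts are the proofs of Lemmas \ref{lemma4.3}--\ref{lemma4.4} (already supplied) and the verification of the vector-valued analogue of \eqref{FSM}. The only genuine obstacle is the bookkeeping in the first step: one must check that the ``bad'' terms $\|\nabla^3 u\|_{H^1}^2$ coming from \eqref{441} and $\varepsilon_0\|\nabla^3\varrho\|_{L^2}^2$ coming from \eqref{431} are truly absorbable into the dissipation, which pins down the order in which the smallness of $\eta$ and of $\varepsilon_0$ must be chosen, and that the Fourier-splitting radius $R$ can be enlarged at will — the price being that the enhanced decay in \eqref{451} holds only after the threshold time $T^*$.
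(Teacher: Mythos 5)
Your proposal is correct and follows essentially the same route as the paper: the paper forms the functional $\mathcal{E}_2^3(t)=\|\nabla^2(\varrho,u)\|_{H^1}^2+\frac{2C_5\varepsilon_0}{C_6}\int\nabla^2 u\cdot\nabla^3\varrho\,dx$ (your $\mathcal{H}$ with $\eta=2C_5\varepsilon_0/C_6$), absorbs the cross terms by smallness of $\varepsilon_0$, applies the Fourier splitting inequalities \eqref{456}--\eqref{457}, restricts to $t\ge T^*=R-1$ so that $R/(1+t)\le1$ handles the $\nabla^3$ part of the $H^1$ norm, fixes $R=8C_{10}/C_8$, and integrates $(1+t)^4\mathcal{E}_2^3$. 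The only differences are bookkeeping (your $\eta$ is chosen before $\varepsilon_0$ rather than tied to it, and you integrate over $[T^*,t]$, which is in fact slightly more careful than the paper's $[0,t]$).
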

\begin{proof}
Multiplying \eqref{441} by $\frac{2 C_5 \varepsilon_0}{C_6} $ and adding to \eqref{431}, then we have
\begin{equation}\label{452}
\frac{d}{dt} \mathcal{E}_2^3(t)+ C_8\int (|\nabla^3 \varrho|^2+|\nabla^3 u|^2+|\nabla^4 u|^2 )dx
\le C_9(1+t)^{-5},
\end{equation}
where $\mathcal{E}_2^3(t)$ is defined as
\begin{equation}\label{453}
\mathcal{E}_2^3(t)=\|\nabla^2 \varrho \|_{H^1}^2+\|\nabla^2 u\|_{H^1}^2
+\frac{2 C_5 \varepsilon_0}{C_6} \int \nabla^2 u\cdot \nabla^3 \varrho dx.
\end{equation}
By virtue of the smallness of $\varepsilon_0$, we have
\begin{equation}\label{454}
C_{10}^{-1}\|\nabla^2(\varrho,u)\|_{H^1}^2
\le
\mathcal{E}_2^3(t)
\le
C_{10} \|\nabla^2(\varrho,u)\|_{H^1}^2.
\end{equation}
It follows directly from \eqref{452} that
\begin{equation}\label{455}
\frac{d}{dt} \mathcal{E}_2^3(t)+
\frac{C_8}{2}\int (|\nabla^3 \varrho|^2+|\nabla^3 \varrho|^2+|\nabla^3 u|^2+|\nabla^4 u|^2) dx
\le C_9 (1+t)^{-5}.
\end{equation}
In the same manner as \eqref{FSM}, we have
\begin{equation}\label{456}
\int |\nabla^3 \varrho|^2 dx\ge \frac{R}{1+t}\int |\nabla^2 \varrho|^2 dx
                            -\left(\frac{R}{1+t}\right)^2\int |\nabla \varrho|^2 dx,
\end{equation}
and
\begin{equation}\label{457}
\|\nabla^3 u\|_{H^1}^2
\ge \frac{R}{1+t}\|\nabla^2 u\|_{H^1}^2
     -\left(\frac{R}{1+t}\right)^2\|\nabla u\|_{H^1}^2.
\end{equation}
Plugging \eqref{456} and \eqref{457} into \eqref{455}, it follows
\begin{equation}\label{458}
\begin{aligned}
&\frac{d}{dt} \mathcal{E}_2^3(t)
+\frac{C_8}{2}\left[\frac{R}{1+t}\int (|\nabla^2 \varrho|^2+|\nabla^2 u|^2+|\nabla^3 u|^2)dx
                    +\int |\nabla^3 \varrho|^2 dx\right]\\
&\lesssim \left(\frac{R}{1+t}\right)^2\int(|\nabla \varrho|^2+|\nabla u|^2+|\nabla^2 u|^2)dx
          +(1+t)^{-5}\\
&\lesssim (1+t)^{-2} (1+t)^{-\frac{5}{2}}+(1+t)^{-5}\\
&\lesssim (1+t)^{-\frac{9}{2}}.
\end{aligned}
\end{equation}
For some large time $t\ge R-1$, we have
$$
\frac{R}{1+t}\le 1,
$$
which implies
\begin{equation}\label{459}
\frac{R}{1+t}\int |\nabla^3 \varrho|^2 dx\le \int |\nabla^3 \varrho|^2 dx.
\end{equation}
Combining \eqref{458} with \eqref{459}, it is easy to deduce
$$
\frac{d}{dt}\mathcal{E}_2^3(t) +\frac{C_8 R}{2(1+t)}\|\nabla^2 (\varrho, u)\|_{H^1}^2
\lesssim (1+t)^{-\frac{9}{2}},
$$
which, together with the equivalent relation \eqref{454}, yields
\begin{equation}\label{4510}
\frac{d}{dt} \mathcal{E}_2^3(t)+\frac{C_8 R}{2C_{10} (1+t)}\mathcal{E}_2^3(t)
\lesssim (1+t)^{-\frac{9}{2}}.
\end{equation}
If choosing
$
R=\frac{8C_{10}}{C_8}
$
in \eqref{4510}, it is easy to deduce
\begin{equation}\label{4511}
\frac{d}{dt} \mathcal{E}_2^3(t)+\frac{4}{1+t}\mathcal{E}_2^3(t)\lesssim (1+t)^{-\frac{9}{2}},
\end{equation}
for all $t \ge T^*:=\frac{8C_{10}}{C_8}-1$.
Multiplying \eqref{4511} by $(1+t)^{4}$, then we have
\begin{equation}\label{4512}
\frac{d}{dt} \left[(1+t)^{4} \mathcal{E}_2^3(t)\right] \lesssim (1+t)^{-\frac{1}{2}}.
\end{equation}
Integrating \eqref{4512} over $[0, t]$, then we get
$$
\mathcal{E}_2^3(t) \lesssim  (1+t)^{-\frac{7}{2}},
$$
which, together with the equivalent relation \eqref{454}, gives
$$
\|\nabla^2 \varrho (t)\|_{H^1}^2+\|\nabla^2 u (t)\|_{H^1}^2  \le  C(1+t)^{-\frac{7}{2}}.
$$
Therefore, we complete the proof of the lemma.
\end{proof}

Finally, we establish optimal decay rate for the third order spatial derivatives
of magnetic field.

\begin{lemm}\label{lemma4.6}
Under the assumption of Theorem \ref{THM3},
then the magnetic field has following decay rate for all $t\ge T^*$,
\begin{equation}
\|\nabla^3 B (t)\|_{L^2} \le C(1+t)^{-\frac{9}{4}}.
\end{equation}
\end{lemm}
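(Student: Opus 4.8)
The plan is to upgrade the bound $\|\nabla^3 B(t)\|_{L^2}\lesssim(1+t)^{-7/4}$ that is already contained in Lemma~\ref{lemma4.2} to the faster rate $(1+t)^{-9/4}$, by repeating the Fourier splitting argument of that lemma but now feeding in the \emph{improved} decay rates for $\nabla^2 u$ (and $\nabla^2 B$) established in Lemmas~\ref{lemma4.2} and~\ref{lemma4.5}. Concretely, I would start from the differential inequality~\eqref{427} obtained in the proof of Lemma~\ref{lemma4.2},
$$
\frac{d}{dt}\int|\nabla^3 B|^2\,dx+\int|\nabla^4 B|^2\,dx
\lesssim\|\nabla(u,B)\|_{H^2}^2\,\|\nabla^2(u,B)\|_{H^1}^2 .
$$
On the right hand side I would bound $\|\nabla(u,B)\|_{H^2}^2\le C(1+t)^{-5/2}$ using~\eqref{411} with $k=1$, and $\|\nabla^2(u,B)\|_{H^1}^2\le C(1+t)^{-7/2}$ for $t\ge T^*$ using~\eqref{421} together with~\eqref{451}; this is the one place where the newly gained decay of $\nabla^2 u$ from Lemma~\ref{lemma4.5} is indispensable. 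Hence, for $t\ge T^*$,
$$
\frac{d}{dt}\int|\nabla^3 B|^2\,dx+\int|\nabla^4 B|^2\,dx\lesssim(1+t)^{-6}.
$$

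Next I would apply the Schonbek Fourier splitting inequality in exactly the form~\eqref{429}: for the time sphere $S_0=\{\xi:\ |\xi|\le(R/(1+t))^{1/2}\}$ one has
$$
\int|\nabla^4 B|^2\,dx\ \ge\ \frac{R}{1+t}\int|\nabla^3 B|^2\,dx
-\Bigl(\frac{R}{1+t}\Bigr)^2\int|\nabla^2 B|^2\,dx .
$$
Substituting this into the previous inequality and controlling the reservoir term by $\bigl(\frac{R}{1+t}\bigr)^2\|\nabla^2 B\|_{L^2}^2\le C(1+t)^{-11/2}$, again via~\eqref{421}, gives
$$
\frac{d}{dt}\int|\nabla^3 B|^2\,dx+\frac{R}{1+t}\int|\nabla^3 B|^2\,dx\le C(1+t)^{-11/2},\qquad t\ge T^*.
$$
Note that here it is essential to estimate $\|\nabla^3 B\|_{L^2}^2$ on its own rather than combining it with $\|\nabla^2 B\|_{L^2}^2$ as in Lemma~\ref{lemma4.2}, since the latter quantity only decays at the slower rate $(1+t)^{-7/2}$.

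Finally, I would fix any $R>9/2$ (for instance $R=5$, consistently with~\eqref{429}), multiply through by $(1+t)^R$, and integrate over $[T^*,t]$, using the uniform bound $\|\nabla^3 B(T^*)\|_{L^2}\le C$ that follows from~\eqref{smallness2}. This yields $(1+t)^R\|\nabla^3 B(t)\|_{L^2}^2\le C+C(1+t)^{R-9/2}\le C(1+t)^{R-9/2}$, hence $\|\nabla^3 B(t)\|_{L^2}^2\le C(1+t)^{-9/2}$, which is the assertion. The computation is routine once Lemmas~\ref{lemma4.2}--\ref{lemma4.5} are in place; the only points requiring care are the use of the refined rate for $\nabla^2 u$ (which is precisely why this lemma must follow Lemma~\ref{lemma4.5}) and the consistent bookkeeping of the threshold time $T^*$ beyond which all the improved decay estimates are valid, so that the final integration is carried out on $[T^*,t]$.
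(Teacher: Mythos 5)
Your proposal is correct and follows essentially the same route as the paper: starting from the differential inequality \eqref{427}, inserting the improved rates from Lemmas \ref{lemma4.2} and \ref{lemma4.5} to get a $(1+t)^{-6}$ right-hand side, applying the Fourier splitting inequality \eqref{429}, and integrating after multiplication by a power of $(1+t)$. Your remark about restricting the final integration to $[T^*,t]$ is in fact slightly more careful than the paper, which integrates from $0$ even though the rate \eqref{451} is only claimed for $t\ge T^*$.
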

\begin{proof}
By virtue of \eqref{427} and applying time decay rates \eqref{421} and \eqref{451}, we obtain
$$
\begin{aligned}
&\frac{d}{dt}\int |\nabla^3 B|^2 dx+\int |\nabla^4 B|^2 dx\\
&\lesssim\|\nabla(u, B)\|_{H^2}^2\|\nabla^2(u, B)\|_{H^1}^2\\
&\lesssim (1+t)^{-\frac{5}{2}}(1+t)^{-\frac{7}{2}}\\
&\lesssim (1+t)^{-6},
\end{aligned}
$$
which, together with \eqref{429}, gives  directly
\begin{equation}\label{462}
\begin{aligned}
&\frac{d}{dt}\int |\nabla^3 B|^2 dx+\frac{5}{1+t}\int |\nabla^3 B|^2 dx\\
&\lesssim (1+t)^{-2}\|\nabla^2 B\|_{L^2}^2+(1+t)^{-6}\\
&\lesssim (1+t)^{-2}(1+t)^{-\frac{7}{2}}+(1+t)^{-6}\\
&\lesssim (1+t)^{-\frac{11}{2}}.
\end{aligned}
\end{equation}
Multiplying \eqref{462} by $(1+t)^{5}$ and integrating the resulting inequality
over $[0, t]$, we obtain
$$
\|\nabla^3 B (t)\|_{L^2}^2 \lesssim (1+t)^{-\frac{9}{2}}.
$$
Therefore, we complete the proof of the lemma.
\end{proof}

\emph{\bf{Proof of Theorem \ref{THM3}:}} With the help of Lemma \ref{lemma4.1},
Lemma \ref{lemma4.2}, Lemma \ref{lemma4.5} and Lemma \ref{lemma4.6},
we complete the proof of Theorem \ref{THM3}.

\subsection{Proof of Theorem \ref{THM4}}

\quad In this section, we establish the time decay rates for the mixed space-time derivatives of global classical solutions.

\begin{lemm}\label{lemma5.1}
Under the assumptions in Theorem \ref{THM3}, the global classical solution $(\varrho, u, B)$
of Cauchy problem \eqref{eq1}-\eqref{eq4} has the time decay rates
$$
\begin{aligned}
&\|\nabla^k \varrho_t(t)\|_{H^{2-k}}+\|\nabla^k u_t(t)\|_{L^2}
  \le C(1+t)^{-\frac{5+2k}{4}},\\
&\|\nabla^k B_t(t)\|_{L^2} \le C(1+t)^{-\frac{7+2k}{4}},\\
\end{aligned}
$$
where $k=0,1$.
\end{lemm}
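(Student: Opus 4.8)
The plan is to read off each mixed space--time derivative directly from the system \eqref{eq1}, converting $\p_t$ into spatial derivatives plus the nonlinear terms $S_1,S_2,S_3$ of \eqref{eq2}, and then to feed in the decay rates already established in Theorem \ref{THM3}, namely \eqref{Decay3} (equivalently Lemmas \ref{lemma4.1}, \ref{lemma4.2}, \ref{lemma4.5} and \ref{lemma4.6}), together with Holder, Sobolev and the Gagliardo-Nirenberg inequality of Lemma \ref{lemma2.1}. This is the $H^3$-analogue of Lemma \ref{lemma3.1}, now with one more spatial derivative at our disposal.

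First I would treat the density. From \eqref{eq1}$_1$ we have $\varrho_t=-{\rm div}u+S_1$ with $S_1=-\varrho{\rm div}u-u\cdot\nabla\varrho$, hence $\|\nabla^k\varrho_t\|_{H^{2-k}}\lesssim\|\nabla^{k+1}u\|_{H^{2-k}}+\|\nabla^k S_1\|_{H^{2-k}}$. The linear part decays like $(1+t)^{-\frac{5+2k}{4}}$ by \eqref{Decay3}, and each summand of $\nabla^k S_1$ in $H^{2-k}$ is a product of two factors, each carrying at least the rate $(1+t)^{-3/4}$ and at least one carrying $(1+t)^{-5/4}$, so it decays strictly faster; this settles $k=0,1$. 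For the velocity, \eqref{eq1}$_2$ gives $u_t=\mu\Delta u+(\mu+\nu)\nabla{\rm div}u-\nabla\varrho+S_2$, so $\|\nabla^k u_t\|_{L^2}\lesssim\|\nabla^{k+2}u\|_{L^2}+\|\nabla^{k+1}\varrho\|_{L^2}+\|\nabla^k S_2\|_{L^2}$; the first two terms decay like $(1+t)^{-\frac{5+2k}{4}}$ by \eqref{Decay3}, and $\|\nabla^k S_2\|_{L^2}$ is controlled as in \eqref{non-estimates} and its one-derivative analogue, which after inserting the decay rates is again dominated by the same power. For the magnetic field, \eqref{eq1}$_3$ gives $B_t=\Delta B+S_3$, so $\|\nabla^k B_t\|_{L^2}\lesssim\|\nabla^{k+2}B\|_{L^2}+\|\nabla^k S_3\|_{L^2}$; here $\|\nabla^{k+2}B\|_{L^2}\lesssim(1+t)^{-\frac{7+2k}{4}}$ by \eqref{421} and Lemma \ref{lemma4.6}, while every term of $\nabla^k S_3$ carries at least one magnetic factor, which decays at the faster magnetic rate (recall $\|\nabla^m B\|_{L^2}\lesssim(1+t)^{-\frac{3+2m}{4}}$ and $\|B\|_{L^\infty}\lesssim(1+t)^{-3/2}$), so $\|\nabla^k S_3\|_{L^2}$ decays faster than $(1+t)^{-\frac{7+2k}{4}}$.

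The main obstacle, though a mild one, is just the bookkeeping of Sobolev exponents needed to land each nonlinear term in $L^2$ (or $H^{2-k}$) with a genuinely faster rate; for instance the estimate of $\|\nabla S_3\|_{L^2}$ uses $\|\nabla^3 B\|_{L^2}\lesssim(1+t)^{-9/4}$ together with $\|\nabla^2\varrho\|_{L^2},\|\nabla^3 u\|_{L^2}\lesssim(1+t)^{-7/4}$. A second, purely technical, point: the sharp rates for $\nabla^2(\varrho,u)$, $\nabla^3 u$ and $\nabla^3 B$ in \eqref{Decay3} were only proven for $t\ge T^*$ in Lemmas \ref{lemma4.5} and \ref{lemma4.6}, so on $[0,T^*]$ I would instead use the uniform bound \eqref{smallness2}, which keeps all these quantities bounded; after enlarging $C$ the claimed rates then hold for all $t\ge 0$. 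The remaining computations are routine and entirely parallel to the proof of Lemma \ref{lemma3.1}.
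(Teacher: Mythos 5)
Your proposal is correct and follows essentially the same route as the paper: the paper's Lemma \ref{lemma5.1} likewise solves each equation of \eqref{eq1} for $\varrho_t$, $u_t$, $B_t$, bounds the resulting spatial-derivative and nonlinear terms via H\"older/Sobolev and the estimates \eqref{non-estimates}, \eqref{273}--\eqref{275}, and then inserts the decay rates of Theorem \ref{THM3}, reusing \eqref{32}--\eqref{35} for the $k=0$ cases. Your remark about patching the interval $[0,T^*]$ with the uniform bound \eqref{smallness2} is a sensible extra precaution that the paper leaves implicit.
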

\begin{proof}
First of all, applying the estimate \eqref{33} and time decay rates \eqref{Decay3}, we obtain
\begin{equation}\label{511}
\|\nabla \varrho_t\|_{L^2}^2 \lesssim (1+t)^{-\frac{7}{2}}.
\end{equation}
Applying the equation \eqref{eq1}$_1$, time decay rates \eqref{Decay3},
Holder and Sobolev inequalities, it arrives at
\begin{equation}\label{512}
\begin{aligned}
\|\nabla^2 \varrho_t\|_{L^2}^2
&=\|-\nabla^2{\rm div}u- \nabla^2(\varrho {\rm div}u+u \cdot \nabla \varrho)\|_{L^2}^2\\
&\lesssim \|\nabla^3 u\|_{L^2}^2
          +\|\nabla^2 (\varrho,u)\|_{L^3}^2\|\nabla(\varrho,u)\|_{L^6}^2\\
&\quad \  +\|(\varrho,u)\|_{L^\infty}^2\|\nabla^3 (\varrho,u)\|_{L^2}^2\\
&\lesssim (1+t)^{-\frac{7}{2}}.
\end{aligned}
\end{equation}
Combining \eqref{511}-\eqref{512} with \eqref{32}, it is easy to obtain
\begin{equation}\label{513}
\|\nabla^k \varrho_t(t)\|_{H^{2-k}}^2 \le (1+t)^{-\frac{5+2k}{2}},
\end{equation}
where $k=0,1$.
Secondly, in view of the equation \eqref{eq1}$_2$, \eqref{non-estimates} and Holder inequality,
we get
$$
\begin{aligned}
\|\nabla u_t\|_{L^2}^2
&=\|\mu \Delta \nabla u+(\mu+\nu)\nabla^2 {\rm div} u-\nabla^2 \varrho+\nabla S_2\|_{L^2}^2\\
&\lesssim \|\nabla^3 u\|_{L^2}^2+\|\nabla^2 \varrho\|_{L^2}^2
          +\|\nabla (\varrho, B)\|_{H^1}^2\|\nabla^2 (u, B)\|_{H^1}^2\\
&\quad   +\delta (\|\nabla^2 \varrho\|_{L^2}^2
                  +\|\nabla^2 u\|_{L^2}^2
                  +\|\nabla^2 B\|_{L^2}^2)\\
&\lesssim (1+t)^{-\frac{7}{2}},
\end{aligned}
$$
which, together with \eqref{34}, gives directly
\begin{equation}\label{514}
\|\nabla^k u_t(t)\|_{L^2}^2 \le C(1+t)^{-\frac{5+2k}{2}},
\end{equation}
where $k=0,1$. Finally, it follows from $\eqref{eq1}_3$,
\eqref{273}-\eqref{275}, Holder and Sobolev inequalities that
$$
\begin{aligned}
\|\nabla B_t\|_{L^2}^2
&=\|\nabla \Delta B+\nabla S_3\|_{L^2}^2\\
&\lesssim \|\nabla^3 B\|_{L^2}^2+\|\nabla (u, B)\|_{H^1}^2\|\nabla^2 (u, B)\|_{L^2}^2
          +\|\nabla^2 \varrho\|_{L^2}^2\|\nabla B\|_{H^1}^2\\
&\lesssim (1+t)^{-\frac{9}{2}}+(1+t)^{-\frac{5}{2}}(1+t)^{-\frac{7}{2}}
          +(1+t)^{-\frac{7}{2}}(1+t)^{-\frac{5}{2}}\\
&\lesssim (1+t)^{-\frac{9}{2}},
\end{aligned}
$$
which, together with \eqref{35}, gives directly
\begin{equation}\label{515}
\|\nabla^k B_t(t)\|_{L^2}^2 \le C(1+t)^{-\frac{7+2k}{2}},
\end{equation}
where $k=0,1$.
Combining \eqref{513}, \eqref{514} with \eqref{515}, then we complete the proof of lemma.
\end{proof}

\emph{\bf{Proof of Theorem \ref{THM4}:}} With the help of Lemma \ref{lemma5.1},
we complete the proof of Theorem \ref{THM4}.

\section*{Acknowledgements}

This research was supported in part by NNSFC(Grant No.11271381) and China 973 Program(Grant No. 2011CB808002).

\phantomsection
\addcontentsline{toc}{section}{\refname}


\begin{thebibliography}{99}

\bibitem{Forbes}
T. G. Forbes,
Magnetic reconnection in solar flares,
Geophys. Astrophys. Fluid Dyn. 62 (1991) 15-36.

\bibitem{Homann}
H. Homann, R. Grauer,
Bifurcation analysis of magnetic reconnection in Hall-MHD systems,
Phys. D 208 (2005) 59-72.

\bibitem{Wardle}
M. Wardle,
Star formation and the Hall effect,
Astrophys. Space Sci. 292 (2004) 317-323.

\bibitem{Balbus-Terquem}
S. A. Balbus, C. Terquem,
Linear analysis of the Hall effect in protostellar disks,
Astrophys. J. 552 (2001) 235-247.

\bibitem{Shalybkov-Urpin}
D. A. Shalybkov, V. A. Urpin,
The Hall effect and the decay of magnetic fields,
Astron. Astrophys. (1997) 685-690.

\bibitem{Mininni}
P. D. Mininni, D. O. G\`{o}mez, S. M. Mahajan,
Dynamo action in magnetohydrodynamics and Hall magnetohydrodynamics,
Astrophys. J. 587 (2003) 472-481.

\bibitem{Liu}
M. Acheritogaray, P. Degond, A. Frouvelle, J. G. Liu,
Kinetic formulation and global existence for the Hall-Magneto-hydrodynamics system,
Kinet. Relat. Models  4  (2011)  901-918.


\bibitem{Chae-Degond-Liu}
D. Chae, P. Degond, J. G. Liu,
Well-posedness for Hall-magnetohydrodynamics,
Ann. Inst. H. Poincar\'{e} Anal. Non Lin\'{e}aire  31  (2014) 555-565.

\bibitem{Chae-Lee}
D. Chae, J. Lee,
On the blow-up criterion and small data global existence for the Hall-magnetohydrodynamics,
J. Differential Equations  256  (2014)  3835-3858.

\bibitem{Fan-Li-Nakamura}
J. S. Fan, F. C. Li, G. Nakamura,
Regularity criteria for the incompressible Hall-magnetohydrodynamic equations,
Nonlinear Anal.  109  (2014) 173-179.

\bibitem{Fan-Ozawa}
J. S. Fan, T. Ozawa,
Regularity criteria for the density-dependent Hall-magnetohydrodynamics,
Appl. Math. Lett.  36  (2014) 14-18.


\bibitem{Maicon-Lucas}
Maicon J. Benvenutti, Lucas C. F. Ferreira,
Existence and stability of global large strong solutions for the Hall-MHD system,
arXiv:1412.8516.

\bibitem{Fan-Huang-Nakamura}
J. S. Fan, S. X. Huang, G. Nakamura,
Well-posedness for the axisymmetric incompressible viscous Hall-magnetohydrodynamic equations,
Appl. Math. Lett.  26  (2013)  963-967.


\bibitem{Chae-Schonbek}
D. Chae, M. E. Schonbek,
On the temporal decay for the Hall-magnetohydrodynamic equations,
J. Differential Equations  255  (2013) 3971-3982.

\bibitem{Weng-Shangkun}
S. K. Weng,
On analyticity and temporal decay rates of solutions to the viscous resistive Hall-MHD system,
arXiv:1412.8239.

\bibitem{Fan-Zhou}
J. S. Fan, A. Alsaedi, T. Hayat, G. Nakamura, Y. Zhou,
On strong solutions to the compressible Hall-magnetohydrodynamic system,
Nonlinear Anal. Real World Appl.  22  (2015), 423-434.

\bibitem{Li-Yu}
F. C. Li, H. J. Yu,
Optimal decay rate of classical solutions to the compressible magnetohydrodynamic equations,
Proc. Roy. Soc. Edinburgh Sect. A 141 (2011) 109-126.

\bibitem{Chen-Tan}
Q. Chen, Z. Tan,
Global existence and convergence rates of smooth solutions for the compressible
magnetohydrodynamic equations,
Nonlinear Anal. 72 (2010) 4438-4451.


\bibitem {Guo-Wang}
Y. Guo, Y. J. Wang,
Decay of dissipative equations and negative Sobolev spaces,
Comm. Partial Differential Equations  37  (2012)  2165-2208.

\bibitem{Tan-Wang}
Z. Tan, H. Q.Wang,
Optimal decay rates of the compressible magnetohydrodynamic equations,
Nonlinear Anal. Real World Appl.  14  (2013)  188-201.


\bibitem{Schonbek}
M. E. Schonbek,
$L^2$ decay for weak solutions of the Navier-Stokes equations,
Arch. Rational Mech. Anal.  88  (1985) 209-222.


\bibitem{Gao-Tao-Yao}
J. C. Gao, Q. Tao, Z. A. Yao,
Long-time Behavior of Solution for the Compressible Nematic Liquid Crystal Flows in $\mathbb{R}^3$,
arXiv:1503.02865.

\bibitem{Gao-Chen-Yao}
J. C. Gao, Y. H. Chen, Z. A. Yao.
Long-time Behavior of Solution to the Compressible Magnetohydrodynamic Equations,
Preprint.

\bibitem {Tan-Wang2}
Y. J. Wang, Z. Tan,
Global existence and optimal decay rate for the strong solutions in $H^2$ to the
compressible Navier-Stokes equations,
Appl. Math. Lett. 24 (2011) 1778-1784.


\bibitem {Hu-Wu2}
X. P. Hu, G. C. Wu,
Global existence and optimal decay rates for three-dimensional compressible viscoelastic flows,
SIAM J. Math. Anal.  45  (2013) 2815-2833.

\bibitem{Wang-Wang}
W. J. Wang, W. K. Wang,
Decay rates of the compressible Navier-Stokes-Korteweg equations with potential forces,
Discrete Contin. Dyn. Syst. 35 (2015) 513-536.

\bibitem{Wang-Wen-Jun}
W. J. Wang,
Large time behavior of solutions to the compressible Navier-Stokes equations with potential force,
J. Math. Anal. Appl. 423 (2015) 1448-1468.


\bibitem{Nirenberg}
L. Nirenberg,
On elliptic partial differential euations,
Ann.Scuola Norm. Sup. Pisa 13 (1959) 115-162.

\bibitem{Matsumura-Nishida}
A. Matsumura, T. Nishida,
The initial value problems for the equations of motion of viscous and heat-conductive gases,
J.Math.Kyoto Univ. 20 (1980) 67-104.

\bibitem{Duan-Yang}
R. J. Duan, H. X. Liu, S. J. Ukai, T. Yang,
Optimal $L^p-L^q$ convergence rates for the compressible
Navier-Stokes equations with potential force,
J.Differential Equations 238 (2007) 220-233.

\end{thebibliography}
\end{document}